\newif\ifpdf
\numberwithin{equation}{section} \swapnumbers
\newtheorem{satz}{Satz}[section]
\newtheorem{theorem}[satz]{Theorem}
\newtheorem{proposition}[satz]{Proposition}
\newtheorem{corollary}[satz]{Corollary}
\newtheorem{lemma}[satz]{Lemma}
\newtheorem{definition}[satz]{Definition}
\newtheorem{remark}[satz]{Remark}
\newtheorem{example}[satz]{Example}
\begin{document}

\hyphenation{ti-me--ho-mo-ge-ne-ous}

\title[Stochastic partial differential equations]{Foundations of the theory of semilinear stochastic partial differential equations}
\author{Stefan Tappe}
\address{Leibniz Universit\"{a}t Hannover, Institut f\"{u}r Mathematische Stochastik, Welfengarten 1, 30167 Hannover, Germany}
\email{tappe@stochastik.uni-hannover.de}
\thanks{The author is grateful to Daniel Gaigall, Georg Grafendorfer, Florian Modler and Thomas Salfeld for valuable comments and discussions.}
\begin{abstract}
The goal of this review article is to provide a survey about the foundations of semilinear stochastic partial differential equations. In particular, we provide a detailed study of the concepts of strong, weak and mild solutions, establish their connections, and review a standard existence- and uniqueness result. The proof of the existence result is based on a slightly extended version of the Banach fixed point theorem.
\end{abstract}
\keywords{Stochastic partial differential equation, solution concepts, existence- and uniqueness result, invariant manifold}
\subjclass[2010]{60H15, 60G17}
\maketitle\thispagestyle{empty}

\tableofcontents

\section{Introduction}

Semilinear stochastic partial differential equations (SPDEs) have a broad spectrum of applications including natural sciences and economics.
The goal of this review article is to provide a survey about the foundations of SPDEs, which have been presented in the monographs \cite{Da_Prato, Prevot-Roeckner, Atma-book}. It may be beneficial for students who are already aware about stochastic calculus in finite dimensions and who wish to have survey material accompanying the aforementioned references. In particular, we review the relevant results from functional analysis about unbounded operators in Hilbert spaces and strongly continuous semigroups.

A large part of this article is devoted to a detailed study of the concepts of strong, weak and mild solutions to SPDEs, to establish their connections, and to review and prove a standard existence- and uniqueness result. The proof of the existence result is based on a slightly extended version of the Banach fixed point theorem.

In the last part of this article we study invariant manifolds for weak solutions to SPDEs. This topic does not belong to the general theory of SPDEs, but it uses and demonstrates many of the results and techniques of the previous sections. It arises from the natural desire to express the solutions of SPDEs, which generally live in an infinite dimensional state space, by means of a finite dimensional state process, and thus, to ensure larger analytical tractability.

This article should also serve as an introductory article to the general theory of SPDEs and enable the reader to learn about further topics and generalizations in this field. Possible further directions are the study of martingale solutions (see, e.g., \cite{Da_Prato, Atma-book}), SPDEs with jumps (see, e.g., \cite{P-Z-book} for SPDEs driven by L\'{e}vy processes and, e.g., \cite{Ruediger-mild, SPDE, Marinelli-Prevot-Roeckner, Tappe-Refine} for SPDEs driven by Poisson random measures), and support theorems as well as further invariance results for SPDEs, see, e.g., \cite{Nakayama-Support, Nakayama}.

The remainder of this article is organized as follows: In Sections~\ref{sec-operators} and \ref{sec-semigroups} we review the required results from functional analysis. In particular, we collect the relevant material about unbounded operators and strongly continuous semigroups. In Section~\ref{sec-processes} we review stochastic processes in infinite dimension. In particular, we recall the definition of a trace class Wiener process and outline the construction of the It\^{o} integral. In Section~\ref{sec-solution-concepts} we present the solution concepts for SPDEs and study their various connections. In Section~\ref{sec-convolution} we review results about the regularity of stochastic convolution integrals, which is essential for the study of mild solutions to SPDEs. In Section~\ref{sec-existence} we review a standard existence- and uniqueness result. Finally, in Section~\ref{sec-manifolds} we deal with invariant manifolds for weak solutions to SPDEs.

\section{Unbounded operators in Hilbert spaces}\label{sec-operators}

In this section, we review the relevant properties about unbounded operators. We shall start with operators in Banach spaces, and focus on operators in Hilbert spaces later on. The reader can find the proofs of the upcoming results in any textbook about functional analysis, such as \cite{Rudin} or \cite{Werner}.

Let $X$ and $Y$ be Banach spaces. For a linear operator $A : X \supset \mathcal{D}(A) \rightarrow Y$, defined on some subspace $\mathcal{D}(A)$ of $X$, we call $\mathcal{D}(A)$ the \emph{domain} of $A$.

\begin{definition}\label{def-closed}
A linear operator $A : X \supset \mathcal{D}(A) \rightarrow Y$ is called \emph{closed}, if for every sequence $(x_n)_{n \in \mathbb{N}} \subset \mathcal{D}(A)$, such that the limits $x = \lim_{n \rightarrow \infty} x_n \in X$ and $y = \lim_{n \rightarrow \infty} A x_n \in Y$ exist, we have $x \in \mathcal{D}(A)$ and $A x = y$.
\end{definition}

\begin{definition}\label{def-dense}
A linear operator $A : X \supset \mathcal{D}(A) \rightarrow Y$ is called \emph{densely defined}, if its domain $\mathcal{D}(A)$ is dense in $X$, that is $\overline{\mathcal{D}(A)} = X$.
\end{definition}

\begin{definition}\label{def-resolvent}
Let $A : X \supset \mathcal{D}(A) \rightarrow X$ be a linear operator.
\begin{enumerate}
\item The \emph{resolvent set} of $A$ is defined as
\begin{align*}
\rho(A) := \{ \lambda \in \mathbb{C} : \lambda - A : \mathcal{D}(A) \rightarrow X \text{ is bijective and } (\lambda - A)^{-1} \in L(X) \}.
\end{align*}

\item The \emph{spectrum of $A$} is defined as $\sigma(A) := \mathbb{C} \setminus \rho(A)$.

\item For $\lambda \in \rho(A)$ we define the \emph{resolvent $R(\lambda, A) \in L(X)$} as
\begin{align*}
R(\lambda, A) := (\lambda - A)^{-1}.
\end{align*}

\end{enumerate}
\end{definition}

Now, we shall introduce the adjoint operator of a densely defined operator in a Hilbert space. Recall that a for a bounded linear operator $T \in L(H_1,H_2)$, mapping between two Hilbert spaces $H_1$ and $H_2$, the adjoint operator is the unique bounded linear operator $T^* \in L(H_2,H_1)$ such that
\begin{align*}
\langle Tx,y \rangle_{H_2} = \langle x,T^* y \rangle_{H_1} \quad \text{for all $x \in H_1$ and $y \in H_2$.}
\end{align*}
In order to extend this definition to unbounded operators, we recall the following extension result for linear operators.

\begin{proposition}\label{prop-lin-op}
Let $X$ be a normed space, let $Y$ be a Banach space, let $D \subset X$ be a dense subspace and let $\Phi : D \rightarrow Y$ be a continuous linear operator. Then there exists a unique continuous extension $\hat{\Phi} : X \rightarrow Y$, that is, a continuous linear operator with $\hat{\Phi}|_{D} = \Phi$. Moreover, we have $\| \hat{\Phi} \| = \| \Phi \|$.
\end{proposition}

Now, let $H$ be a Hilbert space. We recall the representation theorem of Fr\'{e}chet-Riesz. In the sequel, the space $H'$ denotes the dual space of $H$.

\begin{theorem}\label{thm-frechet-riesz}
For every $x' \in H'$ there exists a unique element $x \in H$ with $\langle x', \bullet \rangle = \langle x, \bullet \rangle$. In addition, we have $\| x \| = \| x' \|$.
\end{theorem}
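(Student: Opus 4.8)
The plan is to establish uniqueness and the norm identity by elementary inner-product manipulations, and to obtain existence from the orthogonal decomposition afforded by the Hilbert space structure.

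First I would dispatch \emph{uniqueness}. Suppose $x_1, x_2 \in H$ both satisfy $\langle x', \bullet \rangle = \langle x_i, \bullet \rangle$ for $i = 1,2$. Then $\langle x_1 - x_2, y \rangle = 0$ for every $y \in H$, and choosing $y = x_1 - x_2$ forces $\| x_1 - x_2 \|^2 = 0$, whence $x_1 = x_2$.

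Next, \emph{existence}. If $x' = 0$, then $x = 0$ works. Otherwise the kernel $N := \ker(x')$ is a proper closed subspace of $H$, closed because $x'$ is continuous. Invoking the orthogonal decomposition $H = N \oplus N^\perp$ and using $N \neq H$, the complement $N^\perp$ contains a vector $z$ with $\| z \| = 1$. The key algebraic trick is that, for arbitrary $y \in H$, the vector $x'(y)\, z - x'(z)\, y$ lies in $N$, since applying $x'$ annihilates it. Taking the inner product with $z \in N^\perp$ eliminates the $N$-part and expresses $x'(y)$ as a scalar multiple of $\langle z, y \rangle$, so that $x := \overline{x'(z)}\, z$, with the complex conjugate dictated by the convention for $\langle \bullet, \bullet \rangle$, represents $x'$ in the sense $\langle x', \bullet \rangle = \langle x, \bullet \rangle$.

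Finally, the \emph{norm identity}. The Cauchy--Schwarz inequality gives $|x'(y)| = |\langle x, y \rangle| \leq \| x \| \, \| y \|$, hence $\| x' \| \leq \| x \|$. For the reverse inequality I test against $y = x / \| x \|$ in the nontrivial case $x \neq 0$ (if $x = 0$ then $x' = 0$ and both norms vanish), obtaining $\langle x, x / \| x \| \rangle = \| x \|$, so that $\| x' \| \geq \| x \|$, and equality follows. The main obstacle is the existence step, and its essential ingredient is the orthogonal decomposition $H = N \oplus N^\perp$ for the closed subspace $N$; this in turn rests on the projection theorem, the existence of a nearest point in a closed subspace, which is precisely where the completeness of $H$ enters. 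Everything else is routine.
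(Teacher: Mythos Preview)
Your proof is correct and is the standard argument found in functional analysis texts. The paper itself does not prove this theorem; it states the result and refers the reader to textbooks such as \cite{Rudin} or \cite{Werner} for the proof, and your approach---uniqueness via nondegeneracy of the inner product, existence via the orthogonal complement of the kernel, and the norm identity via Cauchy--Schwarz---is precisely the classical proof one finds there. One minor remark: the paper appears to work with real Hilbert spaces (note the codomain $\mathbb{R}$ in the discussion following the theorem), so the complex conjugate in your definition of $x$ is unnecessary in this context, though it does no harm.
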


Let $A : H \supset \mathcal{D}(A) \rightarrow H$ be a densely defined operator. We define the subspace
\begin{align}\label{def-domain-adjoint}
\mathcal{D}(A^*) := \{ y \in H : x \mapsto \langle Ax, y \rangle \text{ is continuous on } \mathcal{D}(A) \}.
\end{align}
Let $y \in \mathcal{D}(A^*)$ be arbitrary. By virtue of the extension result for linear operators (Proposition~\ref{prop-lin-op}), the operator 
\begin{align*}
\mathcal{D}(A) \rightarrow \mathbb{R}, \quad x \mapsto \langle Ax, y \rangle
\end{align*}
has a unique extension to a linear functional $z' \in H'$. By the representation theorem of Fr\'{e}chet-Riesz (Theorem~\ref{thm-frechet-riesz}) there exists a unique element $z \in H$ with $\langle z', \bullet \rangle = \langle z, \bullet \rangle$. This implies
\begin{align*}
\langle Ax, y \rangle = \langle x, z \rangle \quad \text{for all $x \in \mathcal{D}(A)$.}
\end{align*}
Setting $A^* y := z$, this defines a linear operator $A^* : H \supset \mathcal{D}(A^*) \rightarrow H$, and we have
\begin{align*}
\langle Ax, y \rangle = \langle x, A^* y \rangle \quad \text{for all $x \in \mathcal{D}(A)$ and $y \in \mathcal{D}(A^*)$.}
\end{align*}

\begin{definition}\label{def-adjoint}
The operator $A^* : H \supset \mathcal{D}(A^*) \rightarrow H$ is called the \emph{adjoint operator} of $A$.
\end{definition}

\begin{proposition}\label{prop-A-adj-dicht}
Let $A : H \supset \mathcal{D}(A) \rightarrow H$ be densely defined and closed. Then $A^*$ is densely defined and we have $A = A^{**}$.
\end{proposition}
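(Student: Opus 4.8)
The plan is to translate everything into the geometry of graphs inside the product Hilbert space $H \times H$, equipped with the inner product $\langle (x_1,y_1),(x_2,y_2)\rangle = \langle x_1,x_2\rangle + \langle y_1,y_2\rangle$. I would write $\Gamma(B) = \{(x,Bx) : x \in \mathcal{D}(B)\}$ for the graph of a densely defined operator $B$, and record two elementary observations: first, $B$ is closed precisely when $\Gamma(B)$ is a closed subspace, which is immediate from Definition~\ref{def-closed}; second, the map $V : H \times H \to H \times H$, $V(x,y) := (-y,x)$, is unitary and satisfies $V^2 = -\mathrm{Id}$.

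The technical heart is the identity $\Gamma(A^*) = (V\Gamma(A))^{\perp}$. Indeed, unwinding the definitions, a pair $(u,v)$ is orthogonal to $V\Gamma(A)$ exactly when $\langle Ax,u\rangle = \langle x,v\rangle$ for all $x \in \mathcal{D}(A)$, which by the construction of $A^*$ preceding Definition~\ref{def-adjoint} means precisely $u \in \mathcal{D}(A^*)$ and $A^* u = v$. Two consequences are then immediate and require no hypothesis on $A$ beyond dense definedness: $\Gamma(A^*)$, being an orthogonal complement, is closed, so $A^*$ is automatically a closed operator.

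For the density of $\mathcal{D}(A^*)$ — which is where closedness of $A$ genuinely enters, and which I expect to be the main obstacle — I would argue as follows. A vector $z \in H$ is orthogonal to $\mathcal{D}(A^*)$ if and only if $(z,0) \perp \Gamma(A^*)$, that is, $(z,0) \in \Gamma(A^*)^{\perp} = ((V\Gamma(A))^{\perp})^{\perp} = \overline{V\Gamma(A)}$. Since $A$ is closed, $\Gamma(A)$ is closed and hence so is $V\Gamma(A)$, so the closure may be dropped: $(z,0) \in V\Gamma(A) = \{(-Ax,x) : x \in \mathcal{D}(A)\}$. Comparing second coordinates forces $x = 0$ and hence $z = -A \cdot 0 = 0$, so $\mathcal{D}(A^*)^{\perp} = \{0\}$ and $A^*$ is densely defined.

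Finally, with $A^*$ densely defined and closed, the operator $A^{**} = (A^*)^*$ is well defined and the same identity applies to it: $\Gamma(A^{**}) = (V\Gamma(A^*))^{\perp}$. Using $V(M^{\perp}) = (VM)^{\perp}$ for the unitary $V$ together with $V^2 = -\mathrm{Id}$ and the fact that $\Gamma(A)$ is a subspace (so $-\Gamma(A) = \Gamma(A)$), I compute $V\Gamma(A^*) = V(V\Gamma(A))^{\perp} = (V^2\Gamma(A))^{\perp} = \Gamma(A)^{\perp}$, whence $\Gamma(A^{**}) = (\Gamma(A)^{\perp})^{\perp} = \overline{\Gamma(A)} = \Gamma(A)$ by closedness of $A$. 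Equality of graphs is equality of operators, so $A = A^{**}$. The only genuinely non-formal point is the density step, which rests on removing the closure via closedness of $\Gamma(A)$; everything else is bookkeeping with orthogonal complements and the unitarity of $V$.
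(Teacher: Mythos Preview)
Your proof is correct. The paper does not supply its own proof of this proposition: at the start of Section~\ref{sec-operators} it refers the reader to standard functional-analysis textbooks such as \cite{Rudin} and \cite{Werner} for all of the results in that section, and no argument for Proposition~\ref{prop-A-adj-dicht} appears in the text. Your graph-theoretic argument via the unitary $V(x,y) = (-y,x)$ on $H \times H$, yielding $\Gamma(A^*) = (V\Gamma(A))^{\perp}$ and then $\Gamma(A^{**}) = \overline{\Gamma(A)}$, is precisely the classical proof given in Rudin, so it aligns with what the paper implicitly invokes.
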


\begin{lemma}\label{lemma-domain}
Let $H$ be a separable Hilbert space and let $A : H \supset \mathcal{D}(A) \rightarrow H$ be a closed operator. Then the domain $(\mathcal{D}(A), \| \cdot \|_{\mathcal{D}(A)})$ endowed with the graph norm
\begin{align*}
\| x \|_{\mathcal{D}(A)} = \big( \| x \|^2 + \| Ax \|^2 \big)^{1/2}
\end{align*}
is a separable Hilbert space, too.
\end{lemma}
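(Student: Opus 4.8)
The plan is to identify $(\mathcal{D}(A), \| \cdot \|_{\mathcal{D}(A)})$ isometrically with the graph of $A$ sitting inside the product Hilbert space $H \times H$, and to read off completeness and separability from properties of that ambient space. First I would observe that the graph norm is induced by the inner product
\[
\langle x, y \rangle_{\mathcal{D}(A)} := \langle x, y \rangle + \langle Ax, Ay \rangle, \quad x,y \in \mathcal{D}(A),
\]
so that $(\mathcal{D}(A), \langle \cdot, \cdot \rangle_{\mathcal{D}(A)})$ is at least an inner product space; it then only remains to establish completeness and separability. To this end I would equip $H \times H$ with its canonical inner product $\langle (u_1,v_1),(u_2,v_2) \rangle := \langle u_1,u_2 \rangle + \langle v_1,v_2 \rangle$ and consider the graph map $\iota : \mathcal{D}(A) \rightarrow H \times H$, $\iota(x) = (x, Ax)$. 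By construction $\iota$ is linear and isometric, with image the graph $\mathcal{G}(A) := \{ (x,Ax) : x \in \mathcal{D}(A) \}$. Hence $(\mathcal{D}(A), \| \cdot \|_{\mathcal{D}(A)})$ is isometrically isomorphic to $\mathcal{G}(A)$, and it suffices to verify the two desired properties for the subset $\mathcal{G}(A) \subset H \times H$.

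For completeness, this is precisely where the closedness of $A$ enters: the defining property in Definition~\ref{def-closed} says exactly that $\mathcal{G}(A)$ is a \emph{closed} subspace of $H \times H$. Since $H$ is complete, so is the finite product $H \times H$, and therefore the closed subspace $\mathcal{G}(A)$ is complete as well; transporting this back along the isometry $\iota$ shows that $(\mathcal{D}(A), \| \cdot \|_{\mathcal{D}(A)})$ is a Hilbert space. Equivalently, one may argue directly: if $(x_n)_{n \in \mathbb{N}}$ is Cauchy in the graph norm, then both $(x_n)$ and $(A x_n)$ are Cauchy in $H$, hence converge to limits $x$ and $y$; closedness of $A$ yields $x \in \mathcal{D}(A)$ and $Ax = y$, and then $\| x_n - x \|_{\mathcal{D}(A)}^2 = \| x_n - x \|^2 + \| A x_n - y \|^2 \rightarrow 0$.

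For separability, I would invoke that $H$ separable implies $H \times H$ separable, together with the standard topological fact that every subspace of a separable metric space is again separable (for metric spaces, separability coincides with second countability, which is a hereditary property). Consequently $\mathcal{G}(A) \subset H \times H$ is separable, and so is $(\mathcal{D}(A), \| \cdot \|_{\mathcal{D}(A)})$ via the isometry $\iota$.

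I do not expect a serious obstacle here; the only point that genuinely requires care is recognizing that closedness of $A$ is nothing but closedness of the graph $\mathcal{G}(A)$, which is what upgrades the inner product space to a Hilbert space. The separability is then a soft consequence of the separability of $H$ and does not use closedness at all, so the hypotheses split cleanly between the two assertions.
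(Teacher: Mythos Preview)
Your proof is correct and is the standard argument: identify $\mathcal{D}(A)$ with the graph $\mathcal{G}(A) \subset H \times H$, use closedness of $A$ to get closedness of $\mathcal{G}(A)$ and hence completeness, and use separability of $H \times H$ together with hereditary second countability to get separability. The paper does not actually supply a proof of this lemma; it is stated in Section~\ref{sec-operators} among results whose proofs are deferred to standard functional analysis textbooks, so there is no in-paper argument to compare against.
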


\section{Strongly continuous semigroups}\label{sec-semigroups}

In this section, we present the required results about strongly continuous semigroups. Concerning the proofs of the upcoming results, the reader is referred to any textbook about functional analysis, such as \cite{Rudin} or \cite{Werner}. Throughout this section, let $X$ be a Banach space.

\begin{definition}
Let $(S_t)_{t \geq 0}$ be a family of continuous linear operators $S_t : X \rightarrow X$, $t \geq 0$.
\begin{enumerate}
\item The family $(S_t)_{t \geq 0}$ is a called a \emph{strongly continuous semigroup} (or \emph{$C_0$-semigroup}), if the following conditions are satisfied:
\begin{itemize}
\item $S_0 = {\rm Id}$,

\item $S_{s+t} = S_s S_t$ for all $s, t \geq 0$,

\item $\lim_{t \rightarrow 0} S_t x = x$ for all $x \in X$.
\end{itemize}

\item The family $(S_t)_{t \geq 0}$ is called a \emph{norm continuous semigroup}, if the following conditions are satisfied:
\begin{itemize}
\item $S_0 = {\rm Id}$,

\item $S_{s+t} = S_s S_t$ for all $s, t \geq 0$,

\item $\lim_{t \rightarrow 0} \| S_t - {\rm Id } \| = 0$.
\end{itemize}
\end{enumerate}
\end{definition}

Note that every norm continuous semigroup is also a $C_0$-semigroup. The following growth estimate (\ref{Halbgruppe-Wachstum}) will often be used when dealing with SPDEs.

\begin{lemma}\label{lemma-wachstum}
Let $(S_t)_{t \geq 0}$ be a $C_0$-semigroup. Then there are constants $M \geq 1$ and $\omega \in \mathbb{R}$ such that
\begin{align}\label{Halbgruppe-Wachstum}
\| S_t \| \leq M e^{\omega t} \quad \text{for all $t \geq 0$.}
\end{align}
\end{lemma}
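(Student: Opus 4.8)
The plan is to first establish local boundedness of the operator norm near $t = 0$, and then to promote this to an exponential bound on all of $[0,\infty)$ by exploiting the semigroup property. The only genuinely nontrivial input will be the uniform boundedness principle; the remainder of the argument is elementary bookkeeping.

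First I would show that there exist $\delta > 0$ and $M \geq 1$ such that $\| S_t \| \leq M$ for all $t \in [0,\delta]$. Arguing by contradiction, suppose no such uniform bound exists on any interval $[0,1/n]$; then for every $n \in \mathbb{N}$ one can pick $t_n \in [0,1/n]$ with $\| S_{t_n} \| \geq n$, so that $t_n \to 0$. By the strong continuity of the semigroup we have $S_{t_n} x \to x$ for each fixed $x \in X$, and hence the sequence $(S_{t_n} x)_{n \in \mathbb{N}}$ is bounded in $X$ for every $x \in X$. The Banach--Steinhaus theorem (uniform boundedness principle) then yields $\sup_n \| S_{t_n} \| < \infty$, contradicting $\| S_{t_n} \| \geq n$. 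Since $S_0 = {\rm Id}$ gives $\| S_0 \| = 1$, we may enlarge the constant so that $M \geq 1$. This step is the crux of the proof, and it is precisely where the completeness of $X$ enters.

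Next, for arbitrary $t \geq 0$ I would write $t = n \delta + s$ with $n = \lfloor t / \delta \rfloor \in \mathbb{N}_0$ and remainder $s \in [0,\delta)$. The semigroup properties $S_0 = {\rm Id}$ and $S_{u+v} = S_u S_v$ give the factorization $S_t = S_\delta^n S_s$, whence, applying the bound from the first step to both $S_\delta$ and $S_s$, we obtain
\begin{align*}
\| S_t \| \leq \| S_\delta \|^n \| S_s \| \leq M^n \cdot M = M^{n+1}.
\end{align*}

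Finally, setting $\omega := \delta^{-1} \ln M \geq 0$ and using that $n \delta \leq t$ together with $M \geq 1$ (so that $\ln M \geq 0$), I obtain $M^n = e^{n \ln M} \leq e^{(t/\delta) \ln M} = e^{\omega t}$. Combining this with the previous estimate yields $\| S_t \| \leq M \cdot M^n \leq M e^{\omega t}$ for all $t \geq 0$, which is exactly the growth bound (\ref{Halbgruppe-Wachstum}). I expect the only subtlety to lie in the first step; once local boundedness is in hand, the passage to the exponential estimate is automatic from the semigroup law.
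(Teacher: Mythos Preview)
Your proof is correct and is precisely the standard textbook argument. The paper does not supply its own proof of this lemma; at the start of Section~\ref{sec-semigroups} it explicitly defers all proofs in that section to references such as \cite{Rudin} and \cite{Werner}, and the argument you have written --- uniform boundedness principle for local boundedness near $0$, followed by the semigroup factorization $S_t = S_\delta^n S_s$ --- is exactly what one finds there.
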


\begin{definition}
Let $(S_t)_{t \geq 0}$ be a $C_0$-semigroup.
\begin{enumerate}
\item The semigroup $(S_t)_{t \geq 0}$ is called a \emph{semigroup of contractions} (or \emph{contractive}), if
\begin{align}
\| S_t \| \leq 1 \quad \text{for all $t \geq 0$,}
\end{align}
that is, the growth estimate (\ref{Halbgruppe-Wachstum}) is satisfied with $M=1$ and $\omega = 0$.

\item The semigroup $(S_t)_{t \geq 0}$ is called a \emph{semigroup of pseudo-contractions} (or \emph{pseudo-contractive}), if there exists a constant $\omega \in \mathbb{R}$ such that
\begin{align}\label{growth-pseudo}
\| S_t \| \leq e^{\omega t} \quad \text{for all $t \geq 0$,}
\end{align}
that is, the growth estimate (\ref{Halbgruppe-Wachstum}) is satisfied with $M=1$.
\end{enumerate}
\end{definition}

If $(S_t)_{t \geq 0}$ is a semigroup of pseudo-contractions with growth estimate (\ref{growth-pseudo}), then $(T_t)_{t \geq 0}$ given by
\begin{align*}
T_t := e^{-\omega t} S_t, \quad t \geq 0
\end{align*}
is a semigroup of contractions. Hence, every pseudo-contractive semigroup can be transformed into a semigroup of contractions, which explains the term \emph{pseudo-contractive}.

\begin{lemma}\label{lemma-C0-stetig}
Let $(S_t)_{t \geq 0}$ be a $C_0$-semigroup. Then the following statements are true: 
\begin{enumerate}
\item The mapping
\begin{align*}
\mathbb{R}_+ \times X \rightarrow X, \quad (t, x) \mapsto S_t x
\end{align*}
is continuous.

\item For all $x \in X$ and $T \geq 0$ the mapping
\begin{align*}
[0, T] \rightarrow X, \quad t \mapsto S_t x
\end{align*}
is uniformly continuous.

\end{enumerate}
\end{lemma}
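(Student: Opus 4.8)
The plan is to first establish that for each fixed $x \in X$ the orbit map $t \mapsto S_t x$ is continuous on $\mathbb{R}_+$, and then to upgrade this to joint continuity by combining it with the local uniform boundedness of $\| S_t \|$ furnished by the growth estimate \eqref{Halbgruppe-Wachstum}. The whole argument rests on the observation that strong continuity is postulated only at $t = 0$, so the essential work is to propagate it to arbitrary times by means of the semigroup law. I would treat right and left increments of the orbit map separately. For $t \geq t_0$ I write $S_t x - S_{t_0} x = S_{t_0}(S_{t - t_0} x - x)$, so that $\| S_t x - S_{t_0} x \| \leq \| S_{t_0} \| \, \| S_{t - t_0} x - x \|$, which tends to $0$ as $t \downarrow t_0$ by strong continuity at $0$. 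For $t \leq t_0$ I write $S_{t_0} x - S_t x = S_t(S_{t_0 - t} x - x)$, and here I invoke Lemma~\ref{lemma-wachstum} to bound $\| S_t \|$ uniformly for $t$ in a neighbourhood of $t_0$, while $\| S_{t_0 - t} x - x \| \to 0$ as $t \uparrow t_0$. This yields continuity of the orbit map at every $t_0 \geq 0$.

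For part~(1), I fix a point $(t_0, x_0)$ and use the triangle inequality in the form
\begin{align*}
\| S_t x - S_{t_0} x_0 \| \leq \| S_t \| \, \| x - x_0 \| + \| S_t x_0 - S_{t_0} x_0 \|.
\end{align*}
For $(t, x)$ in a bounded neighbourhood of $(t_0, x_0)$ the factor $\| S_t \|$ stays bounded by some finite constant, again by \eqref{Halbgruppe-Wachstum}, since $t$ then ranges over a bounded set; hence the first summand vanishes as $x \to x_0$, while the second summand vanishes as $t \to t_0$ by the orbit continuity just established. This proves joint continuity.

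For part~(2), the quickest route is a compactness argument: the orbit map $t \mapsto S_t x$ is continuous on the compact interval $[0, T]$ with values in the metric space $X$, hence uniformly continuous. A self-contained estimate is equally short: setting $C := \sup_{s \in [0, T]} \| S_s \|$, which is finite by \eqref{Halbgruppe-Wachstum}, for $0 \leq s \leq t \leq T$ one has $\| S_t x - S_s x \| = \| S_s(S_{t - s} x - x) \| \leq C \, \| S_{t - s} x - x \|$; given $\epsilon > 0$, strong continuity at $0$ supplies a $\delta > 0$ with $\| S_h x - x \| < \epsilon / C$ for all $h \in [0, \delta)$, so that $\| S_t x - S_s x \| < \epsilon$ whenever $|t - s| < \delta$.

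The only genuine obstacle lies in the left-continuity of the orbit map (the approach to $t_0$ from below) and in the continuity in the $x$-variable: both steps essentially use the uniform bound on $\| S_t \|$ from Lemma~\ref{lemma-wachstum}, and without it the decompositions above would fail to be controlled. Everything else reduces to the semigroup identity together with strong continuity at the origin.
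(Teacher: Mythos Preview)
Your argument is correct and follows the standard textbook route: propagate strong continuity from $t=0$ to every $t_0 \geq 0$ via the semigroup law, treating right and left limits separately and invoking the growth estimate \eqref{Halbgruppe-Wachstum} for the left limit; then split $S_t x - S_{t_0} x_0$ into an $x$-increment (controlled by the uniform bound on $\|S_t\|$) and a $t$-increment (controlled by orbit continuity); and finally deduce uniform continuity on $[0,T]$ either by compactness or by the direct estimate $\|S_t x - S_s x\| \leq C \|S_{t-s}x - x\|$.

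The paper itself does not supply a proof of this lemma: at the start of Section~\ref{sec-semigroups} it refers the reader to standard functional-analysis textbooks (Rudin, Werner) for all the semigroup results collected there. Your proof is exactly the argument one finds in those references, so there is nothing to compare against beyond noting that you have filled in what the paper deliberately omits.
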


\begin{definition}
Let $(S_t)_{t \geq 0}$ be a $C_0$-semigroup. The \emph{infinitesimal generator} (in short \emph{generator}) of $(S_t)_{t \geq 0}$ is the linear operator $A : X \supset \mathcal{D}(A) \rightarrow X$, which is defined on the domain
\begin{align*}
\mathcal{D}(A) := \bigg\{ x \in X : \lim_{t \rightarrow 0} \frac{S_t x - x}{t} \text{ exists} \, \bigg\},
\end{align*}
and given by
\begin{align*}
A x := \lim_{t \rightarrow 0} \frac{S_t x - x}{t}.
\end{align*}
\end{definition}

Note that the domain $\mathcal{D}(A)$ is indeed a subspace of $X$. The following result gives some properties of the infinitesimal generator of a $C_0$-semigroup. Recall that we have provided the required concepts in Definitions~\ref{def-closed} and \ref{def-dense}.

\begin{proposition}\label{prop-generator-dense}
The infinitesimal generator $A : X \supset \mathcal{D}(A) \rightarrow X$ of a $C_0$-semigroup $(S_t)_{t \geq 0}$ is densely defined and closed.
\end{proposition}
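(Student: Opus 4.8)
The plan is to prove the two assertions—dense domain and closedness—separately, in both cases exploiting the smoothing effect of averaging the orbit $s \mapsto S_s x$ over a short time interval. The central device is the element
\begin{align*}
x_t := \frac{1}{t} \int_0^t S_s x \, ds,
\end{align*}
which is well defined for every $x \in X$ and $t > 0$ as the integral of a continuous Banach-space-valued function, the continuity of $s \mapsto S_s x$ being furnished by Lemma~\ref{lemma-C0-stetig}.

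For density, I would first note that $x_t \to x$ as $t \downarrow 0$, which is immediate from the continuity of the orbit at $s = 0$ together with $S_0 x = x$. The heart of this part is to verify that each $x_t$ lies in $\mathcal{D}(A)$. To this end I would form the difference quotient $\frac{1}{h}(S_h x_t - x_t)$, use the semigroup property $S_h S_s = S_{h+s}$ and the substitution $u = s+h$ to rewrite it as $\frac{1}{ht}\big( \int_t^{t+h} S_u x \, du - \int_0^h S_u x \, du \big)$, and then let $h \downarrow 0$. By the continuity of the orbit and the fundamental theorem of calculus this limit exists and equals $\frac{1}{t}(S_t x - x)$, so $x_t \in \mathcal{D}(A)$. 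Since every $x \in X$ is the limit of such elements of $\mathcal{D}(A)$, the domain is dense.

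For closedness I would first record the integral identity
\begin{align*}
S_t x - x = \int_0^t S_s A x \, ds \quad \text{for all } x \in \mathcal{D}(A),
\end{align*}
which follows because $t \mapsto S_t x$ is differentiable with derivative $S_t A x$ when $x \in \mathcal{D}(A)$—a fact readily obtained from the definition of the generator together with the continuity and semigroup properties—upon integrating from $0$ to $t$. Now, given a sequence $(x_n)_{n \in \mathbb{N}} \subset \mathcal{D}(A)$ with $x_n \to x$ and $A x_n \to y$, I would write this identity for each $x_n$ and pass to the limit $n \to \infty$: the left-hand side converges to $S_t x - x$ by continuity of $S_t$, and the right-hand side converges to $\int_0^t S_s y \, ds$. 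Dividing by $t$ and letting $t \downarrow 0$ then yields, again by continuity of the orbit at $s=0$, that $\lim_{t \downarrow 0} \frac{S_t x - x}{t} = y$, so indeed $x \in \mathcal{D}(A)$ and $A x = y$.

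The main obstacle I anticipate is the interchange of limit and integral in the closedness argument. Its justification rests on the growth estimate $\| S_s \| \leq M e^{\omega s}$ from Lemma~\ref{lemma-wachstum}, which guarantees that $C_t := \sup_{s \in [0,t]} \| S_s \| < \infty$ and hence that $\| S_s A x_n - S_s y \| \leq C_t \, \| A x_n - y \|$ tends to zero uniformly in $s \in [0,t]$; this uniform convergence legitimizes moving the limit inside the integral, since the integrands converge uniformly on the compact interval of integration. The remaining steps are routine consequences of the continuity of the orbit and the fundamental theorem of calculus.
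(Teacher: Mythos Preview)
Your proof is correct and is precisely the standard textbook argument. Note, however, that the paper does not supply its own proof of this proposition: the opening of Section~\ref{sec-semigroups} explicitly defers all proofs in that section to functional-analysis textbooks such as \cite{Rudin} and \cite{Werner}, and the proposition is stated without proof. Your argument is exactly the one found in those references, so there is nothing to compare---you have filled in what the paper deliberately omits.

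One minor remark on logical order: the integral identity $S_t x - x = \int_0^t S_s A x \, ds$ that you invoke for closedness is later recorded in the paper as part of Lemma~\ref{lemma-hg-rules}, which appears \emph{after} Proposition~\ref{prop-generator-dense}. You correctly avoid circularity by deriving it directly from the definition of the generator and the semigroup property rather than citing that lemma, so the dependency structure is clean.
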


We proceed with some examples of $C_0$-semigroups and their generators.

\begin{example}
For every bounded linear operator $A \in L(X)$ the family $(e^{tA})_{t \geq 0}$ given by
\begin{align*}
e^{t A} := \sum_{n=0}^{\infty} \frac{t^n A^n}{n!}
\end{align*}
is a norm continuous semigroup with generator $A$. In particular, we have $\mathcal{D}(A) = X$.
\end{example}

\begin{example}
We consider the separable Hilbert space $X = L^2(\mathbb{R})$. Let $(S_t)_{t \geq 0}$ be the \emph{shift semigroup} defined as
\begin{align*}
S_t f := f(t + \bullet), \quad t \geq 0.
\end{align*}
Then $(S_t)_{t \geq 0}$ is a semigroup of contractions with generator $A : L^2(\mathbb{R}) \supset \mathcal{D}(A) \rightarrow L^2(\mathbb{R})$ given by
\begin{align*}
\mathcal{D}(A) &= \{ f \in L^2(\mathbb{R}) : \text{ $f$ is absolutely continuous and $f' \in L^2(\mathbb{R})$} \},
\\ Af &= f'.
\end{align*}
\end{example}

\begin{example}
On the separable Hilbert space $X = L^2(\mathbb{R}^d)$ we define the \emph{heat semigroup} $(S_t)_{t \geq 0}$ by $S_0 := {\rm Id}$ and
\begin{align*}
(S_t f)(x) := \frac{1}{(4 \pi t)^{d/2}} \int_{\mathbb{R}^d} \exp \bigg( - \frac{|x-y|^2}{4t} \bigg) f(y) dy, \quad t > 0,
\end{align*}
that is, $S_t f$ arises as the convolution of $f$ with the density of the normal distribution ${\rm N}(0,2t)$. Then $(S_t)_{t \geq 0}$ is a semigroup of contractions with generator $A : L^2(\mathbb{R}^d) \supset \mathcal{D}(A) \rightarrow L^2(\mathbb{R}^d)$ given by
\begin{align*}
\mathcal{D}(A) = W^2(\mathbb{R}^d), \quad Af = \Delta f.
\end{align*}
Here $W^2(\mathbb{R}^d)$ denotes the Sobolev space
\begin{align*}
W^2(\mathbb{R}^d) = \{ f \in L^2(\mathbb{R}^d) : D^{(\alpha)} f \in L^2(\mathbb{R}^d) \text{ exists for all $\alpha \in \mathbb{N}_0^d$ with $|\alpha| \leq 2$} \}
\end{align*}
and $\Delta$ the Laplace operator
\begin{align*}
\Delta = \sum_{i=1}^d \frac{\partial^2}{\partial x_i^2}.
\end{align*}
\end{example}

We proceed with some results regarding calculations with strongly continuous semigroups and their generators.

\begin{lemma}\label{lemma-hg-rules}
Let $(S_t)_{t \geq 0}$ be a $C_0$-semigroup with infinitesimal generator $A$. Then the following statements are true:
\begin{enumerate}
\item For every $x \in \mathcal{D}(A)$ the mapping
\begin{align*}
\mathbb{R}_+ \rightarrow X, \quad t \mapsto S_t x
\end{align*}
belongs to class $C^1(\mathbb{R}_+;X)$, and for all $t \geq 0$ we have $S_t x \in \mathcal{D}(A)$ and
\begin{align*}
\frac{d}{dt} S_t x = A S_t x = S_t A x.
\end{align*}

\item For all $x \in X$ and $t \geq 0$ we have $\int_0^t S_s x ds \in \mathcal{D}(A)$ and
\begin{align*}
A \bigg( \int_0^t S_s x \, ds \bigg) = S_t x - x.
\end{align*}

\item For all $x \in \mathcal{D}(A)$ and $t \geq 0$ we have
\begin{align*}
\int_0^t S_s A x \, ds = S_t x - x.
\end{align*}
\end{enumerate}
\end{lemma}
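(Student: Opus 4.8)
The plan is to prove the three assertions in order, deriving (2) and (3) from (1) together with the continuity properties collected in Lemma~\ref{lemma-C0-stetig}. Throughout, the integrals appearing in the statement are well-defined $X$-valued Riemann integrals, since $s \mapsto S_s x$ is continuous by Lemma~\ref{lemma-C0-stetig}.

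For statement (1), I would fix $x \in \mathcal{D}(A)$ and $t \geq 0$. The crucial observation is that the semigroup property allows one to pull $S_t$ outside the difference quotient: for $h > 0$ we have
\[
\frac{S_h(S_t x) - S_t x}{h} = S_t \bigg( \frac{S_h x - x}{h} \bigg) \longrightarrow S_t A x \quad \text{as } h \downarrow 0,
\]
using that $S_t$ is continuous and that $\frac{S_h x - x}{h} \to Ax$. This simultaneously shows that $S_t x \in \mathcal{D}(A)$, that $A S_t x = S_t A x$, and that the right-hand derivative of $s \mapsto S_s x$ at $t$ equals $S_t A x$.

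I expect the main obstacle to be the left-hand derivative, where two limits must be controlled at once. For $t > 0$ and small $h > 0$ I would use the decomposition
\[
\frac{S_t x - S_{t-h} x}{h} - S_t A x = S_{t-h} \bigg( \frac{S_h x - x}{h} - A x \bigg) + (S_{t-h} - S_t) A x,
\]
and estimate the two terms separately: the first is controlled via the uniform bound $\| S_{t-h} \| \leq M e^{\omega t}$ furnished by Lemma~\ref{lemma-wachstum} together with $\frac{S_h x - x}{h} \to Ax$, while the second tends to zero by strong continuity (Lemma~\ref{lemma-C0-stetig}). Hence both one-sided derivatives coincide with $S_t A x = A S_t x$, and since $t \mapsto S_t A x$ is continuous, the map $t \mapsto S_t x$ belongs to $C^1(\mathbb{R}_+; X)$.

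For statement (2), I would fix $x \in X$ and $t \geq 0$ and apply the difference quotient to the integral. After the substitution $u = s + h$ the expression telescopes into
\[
\frac{S_h - {\rm Id}}{h} \int_0^t S_s x \, ds = \frac{1}{h} \int_t^{t+h} S_u x \, du - \frac{1}{h} \int_0^h S_u x \, du .
\]
Letting $h \downarrow 0$ and invoking continuity of $u \mapsto S_u x$, the right-hand side converges to $S_t x - x$, which yields both $\int_0^t S_s x \, ds \in \mathcal{D}(A)$ and the claimed identity. Finally, statement (3) is an immediate consequence of (1): since $s \mapsto S_s x$ is continuously differentiable with derivative $S_s A x$, the fundamental theorem of calculus for $X$-valued functions gives $\int_0^t S_s A x \, ds = S_t x - S_0 x = S_t x - x$.
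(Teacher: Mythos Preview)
Your proof is correct and follows the standard textbook argument. Note, however, that the paper does not actually supply its own proof of this lemma: Section~\ref{sec-semigroups} opens by stating that ``Concerning the proofs of the upcoming results, the reader is referred to any textbook about functional analysis, such as \cite{Rudin} or \cite{Werner},'' and Lemma~\ref{lemma-hg-rules} is stated without proof. What you have written is precisely the kind of argument one finds in those references, so there is no discrepancy to report; your proposal simply fills in what the paper deliberately omitted.
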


The following result shows that the strongly continuous semigroup $(S_t)_{t \geq 0}$ associated to some generator $A$ is unique. This explains the term \emph{generator}.

\begin{proposition}
Two $C_0$-semigroups $(S_t)_{t \geq 0}$ and $(T_t)_{t \geq 0}$ with the same infinitesimal generator $A$ coincide, that is, we have $S_t = T_t$ for all $t \geq 0$.
\end{proposition}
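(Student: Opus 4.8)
The plan is to fix $t > 0$ and an arbitrary element $x \in \mathcal{D}(A)$ of the common domain of the generator, and to show that the auxiliary function
\[
\phi : [0,t] \to X, \quad \phi(s) := S_{t-s} T_s x
\]
is constant on $[0,t]$. Its endpoint values are $\phi(0) = S_t x$ and $\phi(t) = T_t x$, so constancy of $\phi$ immediately yields $S_t x = T_t x$ for all $x \in \mathcal{D}(A)$. The point is that both semigroups are generated by the same operator $A$, and this information enters precisely through the differentiation rules of Lemma~\ref{lemma-hg-rules}.

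First I would show that $\phi$ is differentiable with $\phi'(s) = 0$ for every $s \in [0,t]$. To this end I would decompose the difference quotient as
\[
\frac{\phi(s+h) - \phi(s)}{h} = S_{t-s-h}\, \frac{T_{s+h}x - T_s x}{h} + \frac{S_{t-s-h} - S_{t-s}}{h}\, T_s x
\]
and pass to the limit $h \to 0$ in the two summands separately. For the first summand, the hypothesis $x \in \mathcal{D}(A)$ gives $\tfrac{1}{h}(T_{s+h}x - T_s x) \to T_s A x$ by Lemma~\ref{lemma-hg-rules}(1); together with the uniform boundedness of $\| S_{t-s-h} \|$ for $h$ near $0$ (Lemma~\ref{lemma-wachstum}) and the strong continuity of $(S_t)_{t \geq 0}$ (Lemma~\ref{lemma-C0-stetig}), this summand converges to $S_{t-s} T_s A x$. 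For the second summand I would set $y := T_s x$, which again lies in $\mathcal{D}(A)$ by Lemma~\ref{lemma-hg-rules}(1); since $u \mapsto S_u y$ is continuously differentiable with derivative $A S_u y = S_u A y$, the chain rule applied to $u = t-s$ shows that this summand converges to $-S_{t-s} A y = -S_{t-s} A T_s x$. Invoking the commutation relation $A T_s x = T_s A x$ from Lemma~\ref{lemma-hg-rules}(1), the two limits cancel, so $\phi'(s) = 0$.

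Since $\phi$ has vanishing derivative on the interval $[0,t]$, it is constant, whence $S_t x = T_t x$ for every $x \in \mathcal{D}(A)$. To conclude I would invoke Proposition~\ref{prop-generator-dense}, by which $\mathcal{D}(A)$ is dense in $X$, together with the continuity of the bounded operators $S_t$ and $T_t$: two continuous linear operators agreeing on a dense subspace agree everywhere, so $S_t = T_t$ on all of $X$. As $t > 0$ was arbitrary and the identity is trivial for $t = 0$, this gives $S_t = T_t$ for all $t \geq 0$.

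The main obstacle is the rigorous differentiation of the product $s \mapsto S_{t-s} T_s x$, in which both factors depend on $s$ and the factor $S_{t-s}$ runs backwards in time. The care needed lies in splitting the difference quotient as above and controlling the term $S_{t-s-h} \tfrac{1}{h}(T_{s+h}x - T_s x)$, where one must pass to the limit in the operator $S_{t-s-h}$ and in the vector $\tfrac{1}{h}(T_{s+h}x - T_s x)$ simultaneously; this is exactly where the growth estimate and strong continuity are required, and it is why the argument is restricted to $x \in \mathcal{D}(A)$ before density is used to extend to all of $X$.
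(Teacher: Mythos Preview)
The paper does not supply its own proof of this proposition: at the beginning of Section~\ref{sec-semigroups} it explicitly defers all proofs of the semigroup results to standard functional analysis textbooks such as \cite{Rudin} and \cite{Werner}. There is therefore nothing in the paper to compare your argument against.

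Your argument itself is the classical textbook proof and is correct. The decomposition of the difference quotient is algebraically right, the appeal to Lemma~\ref{lemma-hg-rules}(1) to differentiate both $s \mapsto T_s x$ and $u \mapsto S_u y$ for $x,y \in \mathcal{D}(A)$ is exactly what is needed, and the use of Lemma~\ref{lemma-wachstum} together with Lemma~\ref{lemma-C0-stetig} to pass to the limit in the first summand is the standard device for handling the simultaneous limit in operator and vector. Two small points you might make fully explicit: (i) at the endpoints $s=0$ and $s=t$ only one-sided difference quotients make sense (since $S_{t-s-h}$ requires $t-s-h \geq 0$), so strictly speaking you establish $\phi'(s)=0$ on $(0,t)$ and use continuity of $\phi$ on $[0,t]$ to conclude constancy; (ii) the implication ``$\phi' \equiv 0$ on an interval $\Rightarrow$ $\phi$ constant'' in a Banach space follows, e.g., by applying a continuous linear functional and invoking the scalar mean value theorem together with Hahn--Banach. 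Neither point is a gap, merely a place where a line of justification could be added.
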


The next result characterizes all norm continuous semigroups in terms of their generators.

\begin{proposition}\label{prop-normstetig}
Let $(S_t)_{t \geq 0}$ be a $C_0$-semigroup with infinitesimal generator $A$. Then the following statements are equivalent:
\begin{enumerate}
\item The semigroup $(S_t)_{t \geq 0}$ is norm continuous.

\item The operator $A$ is continuous.

\item The domain of $A$ is given by $\mathcal{D}(A) = X$.
\end{enumerate}
If the previous conditions are satisfied, then we have $S_t = e^{tA}$ for all $t \geq 0$.
\end{proposition}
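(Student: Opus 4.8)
The plan is to establish the cycle of implications $(1)\Rightarrow(2)$, $(2)\Leftrightarrow(3)$ and $(3)\Rightarrow(1)$, and then to read off the identity $S_t=e^{tA}$ directly from the last step.

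For $(1)\Rightarrow(2)$ I would exploit the averaging operator
\[
V:=\int_0^{t_0}S_s\,ds\in L(X)
\]
for a suitably small $t_0>0$. Because the semigroup is norm continuous, $\|S_s-{\rm Id}\|$ is uniformly small for $s\in[0,t_0]$ once $t_0$ is small, and hence $\|\tfrac{1}{t_0}V-{\rm Id}\|<1$; a Neumann series argument then shows that $V$ is invertible in $L(X)$. By Lemma~\ref{lemma-hg-rules}(2) we have $Vx\in\mathcal{D}(A)$ and $A(Vx)=S_{t_0}x-x$ for every $x\in X$. Since $V$ is surjective, this forces $\mathcal{D}(A)=X$ (so that $(3)$ already drops out), and for $y=Vx$ we obtain $Ay=(S_{t_0}-{\rm Id})V^{-1}y$, exhibiting $A=(S_{t_0}-{\rm Id})V^{-1}$ as a composition of bounded operators; thus $A$ is continuous.

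The equivalence $(2)\Leftrightarrow(3)$ I would settle using that $A$ is closed and densely defined by Proposition~\ref{prop-generator-dense}. If $\mathcal{D}(A)=X$, then $A$ is a closed operator defined on all of the Banach space $X$, so the closed graph theorem yields $A\in L(X)$. Conversely, if $A$ is continuous on $\mathcal{D}(A)$, then the extension result (Proposition~\ref{prop-lin-op}) furnishes a continuous extension $\hat{A}\in L(X)$; approximating an arbitrary $x\in X$ by a sequence in $\mathcal{D}(A)$ and invoking closedness of $A$ shows $x\in\mathcal{D}(A)$, whence $\mathcal{D}(A)=X$. For $(3)\Rightarrow(1)$, note that $(3)$ together with $(2)$ gives $A\in L(X)$, so the exponential $e^{tA}=\sum_{n\geq 0}t^nA^n/n!$ defines a norm continuous semigroup whose generator is $A$, as recorded in the example above. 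Both $(S_t)_{t\geq 0}$ and $(e^{tA})_{t\geq 0}$ are then $C_0$-semigroups sharing the generator $A$, so by the uniqueness statement proved above they coincide, $S_t=e^{tA}$ for all $t\geq 0$; in particular $(S_t)_{t\geq 0}$ inherits norm continuity, and the concluding identity of the proposition is established at the same time.

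I expect the only genuine obstacle to be the invertibility of $V$ in the step $(1)\Rightarrow(2)$: everything hinges on upgrading strong information to norm information so that $\tfrac{1}{t_0}V$ becomes a small perturbation of the identity, after which the Neumann series does the work. The remaining implications are soft, relying on the closed graph theorem, the extension result, and the uniqueness of the semigroup attached to a generator.
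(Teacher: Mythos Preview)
Your argument is correct and is in fact the standard textbook proof (via invertibility of the averaged operator $V$ for small $t_0$, the closed graph theorem, and uniqueness of the generated semigroup). Note, however, that the paper does not supply its own proof of this proposition: Section~\ref{sec-semigroups} explicitly defers all proofs of the semigroup results to functional analysis textbooks such as \cite{Rudin} and \cite{Werner}, so there is no in-paper argument to compare against. Your write-up would serve perfectly well as the omitted proof.
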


Now, we are interested in characterizing all linear operators $A$ which are the infinitesimal generator of some strongly continuous semigroup $(S_t)_{t \geq 0}$. The following theorem of Hille-Yosida gives a characterization in terms of the resolvent, which we have introduced in Definition~\ref{def-resolvent}.

\begin{theorem}
\emph{(Hille-Yosida theorem)} Let $A : X \supset \mathcal{D}(A) \rightarrow X$ be a linear operator and let $M \geq 1$, $\omega \in \mathbb{R}$ be constants. Then the following statements are equivalent:
\begin{enumerate}
\item $A$ is the generator of a $C_0$-semigroup $(S_t)_{t \geq 0}$ with growth estimate (\ref{Halbgruppe-Wachstum}).

\item $A$ is densely defined, closed, we have $(\omega, \infty) \subset \rho(A)$ and
\begin{align*}
\|  R(\lambda, A)^{n} \| \leq M (\lambda - \omega)^{-n} \quad \text{for all $\lambda \in (\omega, \infty)$ and $n \in \mathbb{N}$.}
\end{align*}
\end{enumerate}
\end{theorem}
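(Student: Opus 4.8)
The plan is to prove the two implications separately, treating the necessity of the resolvent conditions first and the more demanding sufficiency direction afterwards.

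For the direction (1) $\Rightarrow$ (2), the density and closedness of $A$ are already furnished by Proposition~\ref{prop-generator-dense}, so the task reduces to locating the spectrum and establishing the resolvent estimate. The central idea is to represent the resolvent as the Laplace transform of the semigroup: for $\lambda \in (\omega, \infty)$ I would define
\[
R_\lambda x := \int_0^\infty e^{-\lambda t} S_t x \, dt, \quad x \in X,
\]
the integral converging absolutely thanks to the growth estimate (\ref{Halbgruppe-Wachstum}). Using the semigroup law and the calculus rules from Lemma~\ref{lemma-hg-rules}, I would verify that $R_\lambda$ maps into $\mathcal{D}(A)$ and satisfies $(\lambda - A) R_\lambda = {\rm Id}$ as well as $R_\lambda (\lambda - A) = {\rm Id}$ on $\mathcal{D}(A)$, so that $\lambda \in \rho(A)$ and $R(\lambda, A) = R_\lambda$. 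The power estimate then follows by differentiating the Laplace representation $n-1$ times in $\lambda$ (equivalently, by iterating the resolvent), which yields
\[
R(\lambda, A)^n x = \frac{1}{(n-1)!} \int_0^\infty t^{n-1} e^{-\lambda t} S_t x \, dt,
\]
and inserting $\| S_t \| \leq M e^{\omega t}$ gives $\| R(\lambda, A)^n \| \leq M (\lambda - \omega)^{-n}$ after evaluating the resulting Gamma integral.

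The direction (2) $\Rightarrow$ (1) is where the real work lies, and I would carry it out via the \emph{Yosida approximation}. After the harmless reduction to $\omega = 0$ (replacing $A$ by $A - \omega$ and later multiplying the constructed semigroup by $e^{\omega t}$), the first step is to show that $\lambda R(\lambda, A) x \to x$ as $\lambda \to \infty$ for every $x \in X$; this holds on the dense subspace $\mathcal{D}(A)$ and extends by the uniform bound $\| \lambda R(\lambda, A) \| \leq M$. Next I would introduce the bounded Yosida approximants
\[
A_\lambda := \lambda A R(\lambda, A) = \lambda^2 R(\lambda, A) - \lambda \, {\rm Id},
\]
establish that $A_\lambda x \to A x$ for $x \in \mathcal{D}(A)$, and form the norm continuous semigroups $S_t^{(\lambda)} := e^{t A_\lambda}$. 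The crux is to derive from the power estimate a uniform bound $\| S_t^{(\lambda)} \| \leq M$ and then to prove that $(S_t^{(\lambda)})$ is Cauchy in $\lambda$ uniformly on compact time intervals, exploiting the commutation of the $A_\lambda$. Passing to the limit defines a family $(S_t)_{t \geq 0}$, which one checks to be a $C_0$-semigroup obeying the growth estimate, and a concluding argument identifies its generator with $A$, the only subtlety being to exclude that this generator is a proper extension of $A$, which is settled using that $\lambda - A$ is already surjective.

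I expect the main obstacle to be the sufficiency direction, and within it the uniform Cauchy estimate for the approximating semigroups together with the identification of the limiting generator. The convergence and boundedness of the approximants are routine manipulations with the resolvent, but the telescoping estimate showing $\| S_t^{(\lambda)} x - S_t^{(\mu)} x \| \to 0$ relies on the uniform bound $\| S_t^{(\lambda)} \| \leq M$ holding for all $\lambda$ simultaneously, which is precisely where the full power estimate $\| R(\lambda, A)^n \| \leq M \lambda^{-n}$, rather than merely its first instance, is indispensable.
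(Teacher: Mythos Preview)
Your sketch is correct and follows the standard textbook route via the Laplace representation for the necessity direction and the Yosida approximation for sufficiency. Note, however, that the paper does not supply its own proof of this theorem: at the start of Section~\ref{sec-semigroups} the reader is referred to standard functional analysis texts such as \cite{Rudin} or \cite{Werner}, and the Hille-Yosida theorem is merely stated. So there is no paper-proof to compare against; your outline is precisely the argument one finds in those references, and nothing is missing from it.
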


In particular, we obtain the following characterization of the generators of semigroups of contractions:

\begin{corollary}
For a linear operator $A : X \supset \mathcal{D}(A) \rightarrow X$ the following statements are equivalent:
\begin{enumerate}
\item $A$ is the generator of a semigroup $(S_t)_{t \geq 0}$ of contractions.

\item $A$ is densely defined, closed, we have $(0, \infty) \subset \rho(A)$ and
\begin{align*}
\|  R(\lambda, A) \| \leq \frac{1}{\lambda} \quad \text{for all $\lambda \in (0, \infty)$.}
\end{align*}
\end{enumerate}
\end{corollary}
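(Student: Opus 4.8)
The plan is to obtain this corollary as a direct specialization of the Hille-Yosida theorem to the parameter values $M = 1$ and $\omega = 0$. The key observation is that a semigroup of contractions is, by definition, precisely a $C_0$-semigroup whose growth estimate (\ref{Halbgruppe-Wachstum}) holds with $M = 1$ and $\omega = 0$. Thus statement (1) of the corollary is literally statement (1) of the Hille-Yosida theorem for this choice of constants, and it remains only to reconcile the two versions of condition (2).

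For the implication (1) $\Rightarrow$ (2) I would simply read off the Hille-Yosida condition (2) with $M = 1$ and $\omega = 0$. Since $\omega = 0$, the inclusion $(\omega, \infty) \subset \rho(A)$ becomes $(0, \infty) \subset \rho(A)$, and the resolvent estimate becomes $\| R(\lambda, A)^n \| \leq \lambda^{-n}$ for all $\lambda > 0$ and $n \in \mathbb{N}$. Specializing to $n = 1$ yields exactly the bound $\| R(\lambda, A) \| \leq \tfrac{1}{\lambda}$ appearing in the corollary, while density and closedness carry over verbatim.

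For the converse (2) $\Rightarrow$ (1) the only point requiring an argument is to upgrade the single resolvent estimate $\| R(\lambda, A) \| \leq \tfrac{1}{\lambda}$ to the full family $\| R(\lambda, A)^n \| \leq \lambda^{-n}$ demanded by Hille-Yosida. This is immediate from submultiplicativity of the operator norm on $L(X)$, since $\| R(\lambda, A)^n \| \leq \| R(\lambda, A) \|^n \leq \lambda^{-n}$. With this, the Hille-Yosida condition (2) holds for $M = 1$ and $\omega = 0$, so $A$ generates a $C_0$-semigroup satisfying $\| S_t \| \leq e^{0 \cdot t} = 1$, that is, a semigroup of contractions.

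There is no genuine obstacle here; the entire content of the corollary lies in recognizing that contractivity corresponds to $M = 1$, $\omega = 0$, and that the higher-order resolvent bounds collapse to powers of the first-order bound precisely when $M = 1$. It is worth emphasizing that this collapse fails for general $M > 1$, which is exactly why the Hille-Yosida theorem must state the estimate for all powers $n$ rather than only for $n = 1$.
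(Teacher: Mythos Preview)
Your proposal is correct and matches the paper's intended approach: the paper gives no explicit proof for this corollary, but introduces it with the phrase ``In particular, we obtain the following characterization of the generators of semigroups of contractions,'' placed immediately after the Hille--Yosida theorem, signalling exactly the specialization to $M=1$, $\omega=0$ that you carry out. Your observation that submultiplicativity of the operator norm collapses the family $\|R(\lambda,A)^n\| \leq \lambda^{-n}$ to the single bound $\|R(\lambda,A)\| \leq \lambda^{-1}$ is the standard (and only real) step needed, and your remark on why this fails for $M>1$ is a nice addition.
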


\begin{proposition}\label{prop-D-A2}
Let $(S_t)_{t \geq 0}$ be a $C_0$-semigroup on $X$ with generator $A$. Then the family $(S_t|_{\mathcal{D}(A)})_{t \geq 0}$ is a $C_0$-semigroup on $(\mathcal{D}(A), \| \cdot \|_{\mathcal{D}(A)})$ with generator $A : \mathcal{D}(A^2) \subset \mathcal{D}(A) \rightarrow \mathcal{D}(A^2)$, where the domain is given by
\begin{align*}
\mathcal{D}(A^2) = \{ x \in \mathcal{D}(A) : Ax \in \mathcal{D}(A) \}.
\end{align*}
\end{proposition}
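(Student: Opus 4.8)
The plan is to verify, in turn, the two assertions: that the restriction is a $C_0$-semigroup on the graph-norm space, and that its generator is $A$ with domain $\mathcal{D}(A^2)$. Throughout, the workhorse is Lemma~\ref{lemma-hg-rules}(1), which tells us that $S_t$ leaves $\mathcal{D}(A)$ invariant and that $A S_t x = S_t A x$ for all $x \in \mathcal{D}(A)$; this commutation relation is what lets me reduce every graph-norm statement to a pair of statements in the original norm of $X$. I would first record that $(\mathcal{D}(A), \| \cdot \|_{\mathcal{D}(A)})$ is itself a Banach space: this holds because $A$ is closed (Proposition~\ref{prop-generator-dense}), so that the restricted semigroup does act on a Banach space in the first place (this is the general Banach-space version of Lemma~\ref{lemma-domain}).

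For the first assertion I would check the semigroup axioms. The identity $S_0 = \mathrm{Id}$ and the relation $S_{s+t} = S_s S_t$ are inherited verbatim from $X$, once we know each $S_t$ maps $\mathcal{D}(A)$ into itself. Boundedness in the graph norm is immediate from the commutation relation: for $x \in \mathcal{D}(A)$,
\[
\| S_t x \|_{\mathcal{D}(A)}^2 = \| S_t x \|^2 + \| S_t A x \|^2 \leq \| S_t \|^2 \, \| x \|_{\mathcal{D}(A)}^2 ,
\]
so that $S_t$ is a bounded operator on $\mathcal{D}(A)$ with $\| S_t \|_{L(\mathcal{D}(A))} \leq \| S_t \|$, and the growth estimate of Lemma~\ref{lemma-wachstum} transfers. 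Strong continuity is the one point requiring a small computation: again using the commutation relation,
\[
\| S_t x - x \|_{\mathcal{D}(A)}^2 = \| S_t x - x \|^2 + \| S_t (Ax) - Ax \|^2 ,
\]
and both summands tend to $0$ as $t \downarrow 0$ by strong continuity of $(S_t)_{t \geq 0}$ on $X$, the first applied to $x$ and the second to $Ax$.

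It remains to identify the generator $B$ of the restricted semigroup. By definition, $x \in \mathcal{D}(B)$ precisely when $t^{-1}(S_t x - x)$ converges in the graph norm as $t \downarrow 0$, which, unwinding the graph norm, means that both $t^{-1}(S_t x - x)$ converges in $X$ and $A\big( t^{-1}(S_t x - x) \big) = t^{-1}(S_t Ax - Ax)$ converges in $X$. For $x \in \mathcal{D}(A)$ the first limit automatically exists and equals $Ax$; the second limit exists if and only if $Ax \in \mathcal{D}(A)$, that is, $x \in \mathcal{D}(A^2)$, in which case it equals $A^2 x$. This shows simultaneously that $\mathcal{D}(B) = \mathcal{D}(A^2)$ and that $Bx = Ax$ for every such $x$, completing the proof.

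The argument is essentially bookkeeping, and I do not anticipate a genuine obstacle; the only place demanding care is the strong-continuity step, where one must remember to invoke strong continuity on $X$ separately for $x$ and for $Ax$, the latter being legitimate precisely because $Ax$ is an honest element of $X$. The commutation relation $A S_t = S_t A$ on $\mathcal{D}(A)$ is the single fact doing all the real work, and every other step is a direct translation of a property of $(S_t)_{t \geq 0}$ on $X$ into the graph norm.
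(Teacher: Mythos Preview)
Your proof is correct. The paper does not supply its own proof of this proposition; it is one of the background results from Section~\ref{sec-semigroups} for which the reader is referred to standard functional-analysis textbooks. Your argument is the standard one and uses exactly the tools the paper makes available: closedness of $A$ (Proposition~\ref{prop-generator-dense}) to ensure $(\mathcal{D}(A), \|\cdot\|_{\mathcal{D}(A)})$ is complete, and the commutation relation from Lemma~\ref{lemma-hg-rules}(1) to reduce every graph-norm statement to a pair of $X$-norm statements. The identification of the generator is handled cleanly; the only implicit appeal you make is to the closedness of $A$ again, to pass from ``$t^{-1}(S_t x - x)$ converges in $X$ and $A$ applied to it converges in $X$'' to ``$t^{-1}(S_t x - x)$ converges in the graph norm,'' which is exactly what closedness gives.
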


Recall that we have introduced the adjoint operator for operators in Hilbert spaces in Definition~\ref{def-adjoint}.

\begin{proposition}\label{prop-hg-adj}
Let $H$ be a Hilbert space and let $(S_t)_{t \geq 0}$ be a $C_0$-semigroup on $H$ with generator $A$. Then the family of adjoint operators $(S_t^*)_{t \geq 0}$ is a $C_0$-semigroup on $H$ with generator $A^*$.
\end{proposition}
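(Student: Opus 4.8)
The plan is to avoid verifying the strong continuity of $(S_t^*)_{t\ge0}$ head-on --- which is the genuine difficulty --- and instead to construct the semigroup generated by $A^*$ through the Hille--Yosida theorem and then identify it with $(S_t^*)_{t\ge0}$. The algebraic part is immediate: taking adjoints in $S_0=\mathrm{Id}$ and in the semigroup relation $S_{s+t}=S_sS_t$ yields $S_0^*=\mathrm{Id}$ and $S_{s+t}^*=S_t^*S_s^*=S_s^*S_t^*$, the last equality by commutativity of the semigroup, so $(S_t^*)_{t\ge0}$ satisfies the semigroup law. Moreover, for all $x,y\in H$ the scalar map $t\mapsto\langle S_t^*x,y\rangle=\langle x,S_ty\rangle$ is continuous by the strong continuity of $(S_t)_{t\ge0}$ (Lemma~\ref{lemma-C0-stetig}); that is, $(S_t^*)_{t\ge0}$ is at least weakly continuous. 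What is not clear is strong continuity at $t=0$, since $\|S_t^*\|=\|S_t\|$ only gives $\limsup_{t\to0}\|S_t^*x\|\le M\|x\|$ with possibly $M>1$, so the weak convergence $S_t^*x\rightharpoonup x$ cannot be upgraded to norm convergence by the usual Hilbert-space argument.

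First I would check that $A^*$ satisfies the resolvent side of the Hille--Yosida theorem with the same constants $M\ge1$ and $\omega\in\mathbb{R}$ coming from the growth estimate (\ref{Halbgruppe-Wachstum}) for $(S_t)_{t\ge0}$ (Lemma~\ref{lemma-wachstum}). Since $A$ is densely defined and closed (Proposition~\ref{prop-generator-dense}), Proposition~\ref{prop-A-adj-dicht} shows that $A^*$ is densely defined (and $A^*$ is closed, being an adjoint). For real $\lambda\in\rho(A)$ one has $(\lambda-A)^*=\lambda-A^*$, and the adjoint of a boundedly invertible operator is again boundedly invertible with inverse the adjoint of the inverse; hence $\lambda\in\rho(A^*)$ and $R(\lambda,A^*)=R(\lambda,A)^*$. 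By the Hille--Yosida theorem applied to $A$ we have $(\omega,\infty)\subset\rho(A)$, so $(\omega,\infty)\subset\rho(A^*)$ as well, and using $\|T^*\|=\|T\|$ together with $(R(\lambda,A)^n)^*=(R(\lambda,A)^*)^n$ the power estimates transfer verbatim, namely $\|R(\lambda,A^*)^n\|=\|R(\lambda,A)^n\|\le M(\lambda-\omega)^{-n}$. The Hille--Yosida theorem then produces a $C_0$-semigroup $(T_t)_{t\ge0}$ with generator $A^*$ and $\|T_t\|\le Me^{\omega t}$.

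It remains to identify $(T_t)_{t\ge0}$ with $(S_t^*)_{t\ge0}$, and here I would use the Laplace-transform representation of the resolvent, $R(\lambda,A)x=\int_0^\infty e^{-\lambda t}S_tx\,dt$ for $\lambda>\omega$ (and likewise for $A^*$ and $(T_t)_{t\ge0}$), which is part of the Hille--Yosida circle of ideas. Pairing with $x,y\in H$ and moving the adjoint inside the integral gives, for every $\lambda>\omega$,
\[
\int_0^\infty e^{-\lambda t}\langle S_t^*x,y\rangle\,dt=\langle R(\lambda,A)^*x,y\rangle=\langle R(\lambda,A^*)x,y\rangle=\int_0^\infty e^{-\lambda t}\langle T_tx,y\rangle\,dt.
\]
Both integrands $t\mapsto\langle S_t^*x,y\rangle$ and $t\mapsto\langle T_tx,y\rangle$ are continuous (by weak continuity of $(S_t^*)_{t\ge0}$ and strong continuity of $(T_t)_{t\ge0}$, respectively) and of exponential order, so the injectivity of the Laplace transform forces $\langle S_t^*x,y\rangle=\langle T_tx,y\rangle$ for all $t\ge0$ and all $x,y\in H$. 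Thus $S_t^*=T_t$ for every $t\ge0$, and consequently $(S_t^*)_{t\ge0}$ is a $C_0$-semigroup whose generator is $A^*$.

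I expect the main obstacle to be exactly the strong continuity issue flagged above: the detour through Hille--Yosida is precisely what turns the easily available weak continuity of $(S_t^*)_{t\ge0}$ into a bona fide $C_0$-semigroup, by first manufacturing a strongly continuous $(T_t)_{t\ge0}$ from the resolvent of $A^*$ and then matching the two families through uniqueness of the Laplace transform. The two technical points demanding care are the adjoint--resolvent identity $R(\lambda,A^*)=R(\lambda,A)^*$ (including the verification that $(\omega,\infty)\subset\rho(A^*)$) and the justification that the adjoint may be interchanged with the vector-valued integral defining the resolvent; both are standard but should be stated explicitly.
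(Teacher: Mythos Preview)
The paper does not actually prove this proposition: Section~\ref{sec-semigroups} opens by referring all proofs to standard functional-analysis textbooks (Rudin, Werner), and Proposition~\ref{prop-hg-adj} is stated without argument. So there is no ``paper's own proof'' to compare against.

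Your argument is correct and self-contained: the algebraic semigroup properties of $(S_t^*)_{t\ge0}$ are immediate, the Hille--Yosida hypotheses transfer to $A^*$ via $R(\lambda,A^*)=R(\lambda,A)^*$ and $\|T^*\|=\|T\|$, and the identification $T_t=S_t^*$ through uniqueness of the scalar Laplace transform is clean. The two technical points you flag (the adjoint--resolvent identity and moving the pairing inside the Laplace integral) are genuine but routine.

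It is worth knowing that there is a shorter, more direct route that many textbooks take and that avoids constructing an auxiliary semigroup. Since $A$ is densely defined and closed, $\mathcal{D}(A^*)$ is dense (Proposition~\ref{prop-A-adj-dicht}). For $y\in\mathcal{D}(A^*)$ and any $x\in H$ one checks, using Lemma~\ref{lemma-hg-rules} on the original semigroup, that
\[
\langle x, S_t^*y - y\rangle \;=\; \langle S_t x - x, y\rangle \;=\; \int_0^t \langle x, S_s^* A^* y\rangle\,ds,
\]
whence $S_t^*y-y=\int_0^t S_s^*A^*y\,ds$ as a Bochner integral (the integrand is weakly continuous, hence strongly measurable in the separable Hilbert space $H$, and bounded by $Me^{\omega s}\|A^*y\|$). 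Thus $\|S_t^*y-y\|\le tMe^{\omega t}\|A^*y\|\to0$. Density of $\mathcal{D}(A^*)$ together with the uniform bound $\|S_t^*\|=\|S_t\|\le Me^{\omega t}$ then yields strong continuity on all of $H$, and a similar computation identifies the generator with $A^*$. Your Hille--Yosida/Laplace-transform route is a perfectly valid alternative; it trades this density argument for an appeal to two larger theorems, but has the virtue of making the role of the resolvent completely transparent.
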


\section{Stochastic processes in infinite dimension}\label{sec-processes}

In this section, we recall the required foundations about stochastic processes in infinite dimension. In particular, we recall the definition of a trace class Wiener process and outline the construction of the It\^{o} integral.

In the sequel, $(\Omega, \mathcal{F}, (\mathcal{F}_t)_{t \geq 0}, \mathbb{P})$ denotes a filtered probability space satisfying the usual conditions.
Let $\mathbb{H}$ be a separable Hilbert space and let $Q \in L(\mathbb{H})$ be a nuclear, self-adjoint, positive definite linear operator.

\begin{definition}\label{def-Wiener-process}
A $\mathbb{H}$-valued, adapted, continuous process $W$ is called a \emph{$Q$-Wiener process}, if the following conditions are satisfied:
\begin{itemize}
\item We have $W_0 = 0$.

\item The random variable $W_t - W_s$ and the $\sigma$-algebra $\mathcal{F}_s$ are independent for all $0 \leq s \leq t$.

\item We have $W_t - W_s \sim {\rm N}(0, (t-s)Q)$ for all $0 \leq s \leq t$.
\end{itemize}
\end{definition}

In Definition~\ref{def-Wiener-process}, the distribution ${\rm N}(0, (t-s)Q)$ is a Gaussian measure with mean~$0$ and covariance operator $(t-s)Q$, see, e.g. \cite[Section~2.3.2]{Da_Prato}. The operator $Q$ is also called the covariance operator of the Wiener process $W$. As $Q$ is a trace class operator, we also call $W$ a \emph{trace class Wiener process}.

Now, let $W$ be a $Q$-Wiener process.
Then, there exist an orthonormal basis $(e_j)_{j \in \mathbb{N}}$ of $\mathbb{H}$ and a sequence $(\lambda_j)_{j \in \mathbb{N}} \subset (0,\infty)$ with $\sum_{j \in \mathbb{N}} \lambda_j < \infty$ such that
\begin{align*}
Qu = \sum_{j \in \mathbb{N}} \lambda_j \langle u,e_j \rangle_{\mathbb{H}} \, e_j, \quad u \in \mathbb{H}
\end{align*}
namely, the $\lambda_j$ are the eigenvalues of $Q$, and each $e_j$
is an eigenvector corresponding to $\lambda_j$. The space $\mathbb{H}_0 := Q^{1/2}(\mathbb{H})$, equipped with the inner product
\begin{align*}
\langle u,v \rangle_{\mathbb{H}_0} := \langle Q^{-1/2} u, Q^{-1/2} v \rangle_{\mathbb{H}},
\end{align*}
is another separable Hilbert space
and $( \sqrt{\lambda_j} e_j )_{j \in \mathbb{N}}$ is an orthonormal basis.
According to \cite[Proposition~4.1]{Da_Prato}, the sequence of stochastic
processes $( \beta^j )_{j \in \mathbb{N}}$ defined as
\begin{align}\label{Wiener-series}
\beta^j := \frac{1}{\sqrt{\lambda_j}} \langle W, e_j \rangle_{\mathbb{H}}, \quad j \in \mathbb{N}
\end{align}
is a sequence of
real-valued independent standard Wiener processes and we
have the expansion
\begin{align*}
W = \sum_{j \in \mathbb{N}} \sqrt{\lambda _j} \beta^j e_j.
\end{align*}
Now, let us briefly sketch the construction of the It\^{o} integral with respect to the Wiener process $W$. Further details can be found in \cite{Da_Prato, Atma-book}. We denote by $L_2^0(H) := L_2(\mathbb{H}_0,H)$ the space of Hilbert-Schmidt
operators from $\mathbb{H}_0$ into $H$, which, endowed with the
Hilbert-Schmidt norm
\begin{align*}
\| \Phi \|_{L_2^0(H)} := \bigg( \sum_{j \in \mathbb{N}} \lambda_j \| \Phi e_j \|^2 \bigg)^{1/2},
\quad \Phi \in L_2^0(H)
\end{align*}
itself is a separable Hilbert space. The construction of the It\^{o} integral is divided into three steps:
\begin{enumerate}
\item For every $L(\mathbb{H},H)$-valued simple process of the form
\begin{align*}
X = X_0 \mathbbm{1}_{\{0\}} + \sum_{i=1}^n X_i \mathbbm{1}_{(t_i,t_{i+1}]}
\end{align*}
with $0 = t_1 < \ldots < t_{n+1} = T$ and $\mathcal{F}_{t_i}$-measurable random variables $X_i : \Omega \rightarrow L(\mathbb{H},H)$ for $i = 1,\ldots,n$ we set
\begin{align*}
\int_0^t X_s dW_s := \sum_{i=1}^n X_i (W_{t \wedge t_{i+1}} - W_{t \wedge t_i} ).
\end{align*}
\item For every predictable $L_2^0(H)$-valued process $X$ satisfying
\begin{align*}
\mathbb{E} \bigg[ \int_0^T \| X_s \|_{L_2^0(H)}^2 ds \bigg] < \infty
\end{align*}
we extend the It\^{o} integral $\int_0^t X_s dW_s$ by an extension argument for linear operators. In particular, we obtain the \emph{It\^{o} isometry}
\begin{align}\label{Ito-isom}
\mathbb{E} \Bigg[ \bigg\| \int_0^T X_s dW_s \bigg\|^2 \Bigg] = \mathbb{E} \bigg[ \int_0^T \| X_s \|_{L_2^0(H)}^2 ds \bigg].
\end{align}
\item By localization, we extend the It\^{o} integral $\int_0^t X_s dW_s$ for every predictable $L_2^0(H)$-valued process $X$ satisfying
\begin{align*}
\mathbb{P} \bigg( \int_0^t \| \Phi_s \|_{L_2^0(H)}^2 ds < \infty
\bigg) = 1 \quad \text{for all $t \geq 0$.}
\end{align*}
\end{enumerate}
The It\^{o} integral $(\int_0^t X_s dW_s)_{t \geq 0}$ is an $H$-valued, continuous, local martingale, and we have the series expansion 
\begin{align}\label{series-integral}
\int_0^t X_s dW_s = \sum_{j \in \mathbb{N}} \int_0^t X_s^j d\beta_s^j, \quad t \geq 0,
\end{align}
where $X^j := \sqrt{\lambda_j} X e_j$ for each $j \in \mathbb{N}$. An indispensable tool for stochastic calculus in infinite dimensions is It\^{o}'s formula, which we shall recall here.

\begin{theorem}[It\^{o}'s formula]\label{thm-Ito}
Let $E$ be another separable Hilbert space, let $f \in C_b^{1,2,{\rm loc}}(\mathbb{R}_+ \times H; E)$ be a function and let $X$ be an $H$-valued It\^{o} process of the form
\begin{align*}
X_t = X_0 + \int_0^t Y_s ds + \int_0^t Z_s dW_s, \quad t \geq 0.
\end{align*}
Then $(f(t, X_t))_{t \geq 0}$ is an $E$-valued It\^{o} process, and we have $\mathbb{P}$--almost surely
\begin{align*}
f(t, X_t) &= f (0, X_0) + \int_0^t \bigg( D_s f(s, X_s) + D_x f(s, X_s) Y_s 
\\& \quad + \frac{1}{2} \sum_{j \in \mathbb{N}} D_{xx} f(s, X_s) (Z_s^j, Z_s^j) \bigg) ds
+ \int_0^t D_x f(s, X_s) Z_s dW_s, \quad t \geq 0,
\end{align*}
where we use the notation $Z^j := \sqrt{\lambda_j} Z e_j$ for each $j \in \mathbb{N}$.
\end{theorem}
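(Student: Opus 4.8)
The plan is to reduce the statement, by localization, to a setting in which $f$ and its derivatives as well as the coefficients $Y$ and $Z$ are bounded, and then to prove the formula by approximating both the driving noise and the state space by finite-dimensional objects, where the classical It\^o formula is available. First I would introduce, for $R > 0$, the stopping times
\begin{align*}
\tau_R := \inf\Big\{ t \geq 0 : \|X_t\| \geq R \ \text{ or } \ \int_0^t \big( \|Y_s\| + \|Z_s\|_{L_2^0(H)}^2 \big)\, ds \geq R \Big\},
\end{align*}
which increase to $+\infty$ almost surely by the continuity of $X$ and the local integrability of $Y$ and $Z$. Since the It\^o-process structure and both integrals on the right-hand side are preserved under stopping at $\tau_R$, it suffices to prove the identity for the stopped process $X_{\cdot \wedge \tau_R}$; letting $R \to \infty$ at the very end recovers the general case. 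On $[0, \tau_R)$ the process $X$ stays in the closed ball $\overline{B}_R(0) \subset H$, so that, by the assumption $f \in C_b^{1,2,\mathrm{loc}}$, the function $f$ together with $D_s f$, $D_x f$ and $D_{xx} f$ is bounded on the relevant region, while $Y$ and $Z$ satisfy the global integrability needed for the It\^o integrals to be genuine square-integrable martingales. Hence I may assume henceforth that $f, D_s f, D_x f, D_{xx} f$ are bounded and continuous and that $\mathbb{E}\big[ \int_0^T ( \|Y_s\| + \|Z_s\|_{L_2^0(H)}^2 )\, ds \big] < \infty$.

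Next I would truncate the noise. Using the expansion $W = \sum_j \sqrt{\lambda_j}\, \beta^j e_j$ and the series representation (\ref{series-integral}) of the stochastic integral, I set
\begin{align*}
X_t^{(m)} := X_0 + \int_0^t Y_s\, ds + \sum_{j=1}^m \int_0^t Z_s^j\, d\beta_s^j, \quad t \geq 0,
\end{align*}
which is driven by only finitely many independent real-valued standard Wiener processes $\beta^1, \dots, \beta^m$. By the It\^o isometry (\ref{Ito-isom}) together with Doob's inequality, $X^{(m)} \to X$ uniformly on $[0,T]$ in $L^2(\Omega; H)$ as $m \to \infty$. For each fixed $m$ I would establish the formula for $X^{(m)}$ by a further finite-dimensional reduction of the state space: for an orthonormal basis $(g_k)_{k \in \mathbb{N}}$ of $H$ let $\pi_N$ denote the orthogonal projection onto $\mathrm{span}(g_1, \dots, g_N)$. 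Then $\pi_N X^{(m)}$ is a genuine finite-dimensional It\^o process driven by $\beta^1, \dots, \beta^m$, so the classical Hilbert-space valued It\^o formula applies to the composition $t \mapsto f(t, \pi_N X_t^{(m)})$. Passing to the limit $N \to \infty$, using $\pi_N X^{(m)} \to X^{(m)}$, $\pi_N Y \to Y$, $\pi_N Z^j \to Z^j$ and the continuity and boundedness of $f, D_x f, D_{xx} f$, yields It\^o's formula for $X^{(m)}$, whose second-order term reads $\tfrac{1}{2} \sum_{j=1}^m \int_0^t D_{xx} f(s, X_s^{(m)})(Z_s^j, Z_s^j)\, ds$.

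Finally I would let $m \to \infty$. The drift and the first-order It\^o term converge to their counterparts for $X$ by the continuity of $D_x f$ and the convergence $X^{(m)} \to X$, once more via (\ref{Ito-isom}). The main obstacle is the second-order term, where one must justify
\begin{align*}
\frac{1}{2} \sum_{j=1}^m \int_0^t D_{xx} f(s, X_s^{(m)})(Z_s^j, Z_s^j)\, ds \ \longrightarrow \ \frac{1}{2} \sum_{j \in \mathbb{N}} \int_0^t D_{xx} f(s, X_s)(Z_s^j, Z_s^j)\, ds,
\end{align*}
that is, interchange the limit in $m$ with the infinite summation over $j$ arising from the trace-class structure of the noise. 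Here the boundedness of $D_{xx} f$ secured by localization is decisive: since $Z^j = \sqrt{\lambda_j}\, Z e_j$, one has the uniform bound $\sum_{j} | D_{xx} f(\cdot)(Z^j, Z^j) | \leq \| D_{xx} f \|_\infty \sum_j \lambda_j \| Z e_j \|^2 = \| D_{xx} f \|_\infty \| Z \|_{L_2^0(H)}^2$, whose integral over $[0,T]$ is finite. Dominated convergence, combined with the convergence of the individual summands guaranteed by the continuity of $D_{xx} f$ and $X^{(m)} \to X$, then yields the claimed limit. Collecting the three limit terms and removing the localization by letting $R \to \infty$ completes the proof.
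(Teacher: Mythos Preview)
The paper does not actually prove this theorem: its entire proof reads ``This result is a consequence of \cite[Theorem~2.9]{Atma-book}.'' So there is nothing to compare against in terms of argument; the paper simply defers to the monograph of Gawarecki and Mandrekar.

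Your outline follows the standard strategy used in that and similar references (localize, truncate the noise, reduce the state space to finite dimensions, pass to the limit), and the logic is sound. Two remarks are worth making. First, when you write that ``the classical Hilbert-space valued It\^o formula applies'' to $t \mapsto f(t,\pi_N X_t^{(m)})$, be careful not to invoke the very statement you are proving: what you actually need here is It\^o's formula for a function from $\mathbb{R}_+ \times \mathbb{R}^N$ into the (still infinite-dimensional) Hilbert space $E$. This follows from the real-valued formula by pairing with elements of a countable dense subset of $E$, but you should say so explicitly rather than label it ``classical''. Second, in the passage $m \to \infty$ for the second-order term, the summands $D_{xx}f(s,X_s^{(m)})(Z_s^j,Z_s^j)$ depend on $m$ through $X^{(m)}$, so the limit is not a pure tail estimate; your dominated-convergence argument handles this correctly once you view the finite sum as $\sum_{j \in \mathbb{N}} \mathbbm{1}_{\{j \le m\}}\,(\cdots)$ and use the uniform majorant $\|D_{xx}f\|_\infty \int_0^t \|Z_s^j\|^2\,ds$, but it is worth spelling this out.
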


\begin{proof}
This result is a consequence of \cite[Theorem~2.9]{Atma-book}.
\end{proof}

\section{Solution concepts for SPDEs}\label{sec-solution-concepts}

In this section, we present the concepts of strong, mild and weak solutions to SPDEs and discuss their relations.

Let $H$ be a separable Hilbert space and let $(S_t)_{t \geq 0}$ be a $C_0$-semigroup on $H$ with infinitesimal generator $A$. Furthermore, let $W$ be a trace class Wiener process on some separable Hilbert space $\mathbb{H}$. We consider the SPDE
\begin{align}\label{SPDE}
\left\{
\begin{array}{rcl}
dX_t & = & (AX_t + \alpha(t, X_t))dt + \sigma(t, X_t) dW_t
\medskip
\\ X_0 & = & h_0.
\end{array}
\right.
\end{align}
Here $\alpha : \mathbb{R}_+ \times H \rightarrow H$ and $\sigma : \mathbb{R}_+ \times H \rightarrow L_2^0(H)$ are measurable mappings.

\begin{definition}\label{def-solutions}
Let $h_0 : \Omega \rightarrow H$ be a $\mathcal{F}_0$-measurable random variable and let $\tau > 0$ be a strictly positive stopping time. Furthermore, let $X = X^{(h_0)}$ be an $H$-valued, continuous, adapted process such that
\begin{align*}
\mathbb{P} \bigg( \int_{0}^{t \wedge \tau} \big( \| X_s \| + \| \alpha(s, X_s) \| + \| \sigma(s, X_s) \|_{L_2^0(H)}^2 \big) ds < \infty \bigg) = 1 \quad \text{for all $t \geq 0$.}
\end{align*}

\begin{enumerate}
\item $X$ is called a \emph{local strong solution} to (\ref{SPDE}), if
\begin{align}\label{X-in-domain}
&X_{t \wedge \tau} \in \mathcal{D}(A) \quad \text{for all $t \geq 0$,} \quad \text{$\mathbb{P}$--almost surely,}
\\ \label{AX-int} &\mathbb{P} \bigg( \int_{0}^{t \wedge \tau} \| A X_{s} \| ds < \infty \bigg) = 1 \quad \text{ for all } t \geq 0
\end{align}
and $\mathbb{P}$--almost surely we have
\begin{align*}
X_{t \wedge \tau} &= h_0 + \int_{0}^{t \wedge \tau} \big( A X_s + \alpha(s, X_s) \big) ds + \int_{0}^{t \wedge \tau} \sigma(s, X_s) dW_s, \quad t \geq 0.
\end{align*}
\item $X$ is called a \emph{local weak solution} to (\ref{SPDE}), if for all $\zeta \in \mathcal{D}(A^*)$ the following equation is fulfilled $\mathbb{P}$--almost surely:
\begin{align*}
\langle \zeta, X_{t \wedge \tau} \rangle &= \langle \zeta, h_0 \rangle + \int_{0}^{t \wedge \tau} \big( \langle A^* \zeta, X_s \rangle + \langle \zeta, \alpha(s, X_s) \rangle \big) ds 
\\ &\quad + \int_{0}^{t \wedge \tau} \langle \zeta, \sigma(s, X_s) \rangle dW_s, \quad t \geq 0.
\end{align*}
\item $X$ is called a \emph{local mild solution} to (\ref{SPDE}), if $\mathbb{P}$--almost surely we have
\begin{align*}
X_{t \wedge \tau} &= S_{t \wedge \tau} h_0 + \int_{0}^{t \wedge \tau} S_{(t \wedge \tau) - s} \alpha(s, X_s) ds + \int_{0}^{t \wedge \tau} S_{(t \wedge \tau) - s} \sigma(s, X_s) dW_s, \quad t \geq 0.
\end{align*}
\end{enumerate}
We call $\tau$ the \emph{lifetime} of $X$. If $\tau \equiv \infty$, then we call $X$ a \emph{strong}, \emph{weak} or \emph{mild solution} to (\ref{SPDE}), respectively.
\end{definition}

\begin{remark}
Note that the concept of a strong solution is rather restrictive, because condition (\ref{X-in-domain}) has to be fulfilled.
\end{remark}

For what follows, we fix a $\mathcal{F}_0$-measurable random variable $h_0 : \Omega \rightarrow H$ and a strictly positive stopping time $\tau > 0$.

\begin{proposition}\label{prop-stark-schwach}
Every local strong solution $X$ to (\ref{SPDE}) with lifetime $\tau$ is also a local weak solution to (\ref{SPDE}) with lifetime $\tau$.
\end{proposition}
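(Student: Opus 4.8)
The plan is to derive the weak formulation directly from the strong one by pairing the strong solution identity with an arbitrary test vector $\zeta \in \mathcal{D}(A^*)$ and commuting the bounded linear functional $\langle \zeta, \cdot \rangle$ through each term. Concretely, I would start from the defining $\mathbb{P}$-almost sure identity of a local strong solution,
\[
X_{t \wedge \tau} = h_0 + \int_{0}^{t \wedge \tau} \big( A X_s + \alpha(s, X_s) \big)\, ds + \int_{0}^{t \wedge \tau} \sigma(s, X_s)\, dW_s, \quad t \geq 0,
\]
fix $\zeta \in \mathcal{D}(A^*)$, and apply $\langle \zeta, \cdot \rangle$ to both sides. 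The constant term yields $\langle \zeta, h_0 \rangle$ at once, and the task reduces to pushing the functional inside the two integrals.

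For the Lebesgue integral, since $\langle \zeta, \cdot \rangle \in H'$ is continuous and linear, it commutes with the Bochner integral, giving
\[
\Big\langle \zeta, \int_{0}^{t \wedge \tau} \big( A X_s + \alpha(s, X_s) \big)\, ds \Big\rangle = \int_{0}^{t \wedge \tau} \big( \langle \zeta, A X_s \rangle + \langle \zeta, \alpha(s, X_s) \rangle \big)\, ds,
\]
where integrability of the right-hand side is guaranteed by the standing conditions on $\|X_s\|$, $\|\alpha(s,X_s)\|$ in Definition~\ref{def-solutions} together with \eqref{AX-int}. The crucial algebraic step is then the adjoint relation: for each $s$ with $X_s \in \mathcal{D}(A)$, which holds for $s \le t \wedge \tau$ by \eqref{X-in-domain}, and $\zeta \in \mathcal{D}(A^*)$, the defining property of $A^*$ gives $\langle \zeta, A X_s \rangle = \langle A X_s, \zeta \rangle = \langle X_s, A^* \zeta \rangle = \langle A^* \zeta, X_s \rangle$, using that the inner product on the real Hilbert space $H$ is symmetric. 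This converts the first summand into exactly $\langle A^* \zeta, X_s \rangle$.

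The step that genuinely requires care, and which I regard as the main obstacle, is the commutation of the functional with the It\^{o} integral,
\[
\Big\langle \zeta, \int_{0}^{t \wedge \tau} \sigma(s, X_s)\, dW_s \Big\rangle = \int_{0}^{t \wedge \tau} \langle \zeta, \sigma(s, X_s) \rangle\, dW_s,
\]
where on the right $\langle \zeta, \sigma(s, X_s) \rangle$ denotes the composition $\langle \zeta, \cdot \rangle \circ \sigma(s, X_s) \in L_2^0(\mathbb{R})$. I would justify this along the construction of the integral: for $L(\mathbb{H},H)$-valued simple integrands the identity is immediate from linearity of $\langle \zeta, \cdot \rangle$ and the definition of the elementary integral; it extends to general predictable integrands via the It\^{o} isometry \eqref{Ito-isom}, using the bound $\| \langle \zeta, \cdot \rangle \circ \Phi \|_{L_2^0(\mathbb{R})} \le \| \zeta \| \, \| \Phi \|_{L_2^0(H)}$ to control the approximating sequences, and finally to the localized integral, the requisite integrability of $\langle \zeta, \sigma(\cdot, X_\cdot) \rangle$ following from the same bound and the condition on $\|\sigma(s,X_s)\|_{L_2^0(H)}^2$. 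Alternatively, one can obtain the entire identity in one stroke by applying It\^{o}'s formula (Theorem~\ref{thm-Ito}) to the linear function $f(x) = \langle \zeta, x \rangle$, whose derivative is the constant functional $\langle \zeta, \cdot \rangle$ and whose second derivative vanishes, so that the second-order correction term is absent and the weak equation emerges directly.

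Collecting the three contributions produces the weak solution identity for the fixed $\zeta$. Since $\zeta \in \mathcal{D}(A^*)$ was arbitrary, and $X$ already inherits the adaptedness, continuity and integrability requirements from being a strong solution, it follows that $X$ is a local weak solution to \eqref{SPDE} with the same lifetime $\tau$. Everything apart from the commutation with the stochastic integral is a routine consequence of the linearity and continuity of the pairing and the definition of the adjoint operator.
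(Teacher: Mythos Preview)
Your proposal is correct and follows essentially the same approach as the paper: fix $\zeta \in \mathcal{D}(A^*)$, apply the bounded functional $\langle \zeta, \cdot \rangle$ to the strong solution identity, commute it through the Bochner and It\^{o} integrals, and invoke the adjoint relation $\langle \zeta, A X_s \rangle = \langle A^* \zeta, X_s \rangle$. The paper's proof is in fact terser than yours---it simply writes down the chain of equalities without separately justifying the commutation with the stochastic integral---so your discussion of that step (via simple integrands and the isometry, or via It\^{o}'s formula applied to the linear functional) is more thorough than what the paper provides.
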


\begin{proof}
Let $X$ be a local strong solution to (\ref{SPDE}) with lifetime $\tau$. Furthermore, let $\zeta \in \mathcal{D}(A^*)$ be arbitrary. Then we have $\mathbb{P}$--almost surely for all $t \geq 0$ the identities
\begin{align*}
\langle \zeta, X_{t \wedge \tau} \rangle &= \Big\langle \zeta, h_0 + \int_{0}^{t \wedge \tau} \big( A X_s + \alpha(s, X_s) \big) ds + \int_{0}^{t \wedge \tau} \sigma(s, X_s) dW_s \Big\rangle
\\ &= \langle \zeta, h_0 \rangle + \int_{0}^{t \wedge \tau} \langle \zeta, AX_s + \alpha(s, X_s) \rangle ds + \int_{0}^{t \wedge \tau} \langle \zeta, \sigma(s, X_s) \rangle dW_s
\\ &= \langle \zeta, h_0 \rangle + \int_{0}^{t \wedge \tau} \big( \langle A^* \zeta, X_s \rangle + \langle \zeta, \alpha(s, X_s) \rangle \big) ds + \int_{0}^{t \wedge \tau} \langle \zeta, \sigma(s, X_s) \rangle dW_s,
\end{align*}
showing that $X$ is also a local weak solution to (\ref{SPDE}) with lifetime $\tau$.
\end{proof}

\begin{proposition}\label{prop-strak-schwach-eq}
Let $X$ be a stochastic process with $X_0 = h_0$. Then the following statements are equivalent:
\begin{enumerate}
\item The process $X$ is a local strong solution to (\ref{SPDE}) with lifetime $\tau$.

\item The process $X$ is a local weak solution to (\ref{SPDE}) with lifetime $\tau$, and we have (\ref{X-in-domain}), (\ref{AX-int}).
\end{enumerate}
\end{proposition}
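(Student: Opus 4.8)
The plan is to treat the asserted equivalence as one trivial implication and one substantive one. The implication $(1)\Rightarrow(2)$ requires no work: if $X$ is a local strong solution, then conditions (\ref{X-in-domain}) and (\ref{AX-int}) hold by the very definition of a strong solution, while $X$ is a local weak solution by Proposition~\ref{prop-stark-schwach}. Hence the entire content lies in establishing $(2)\Rightarrow(1)$.

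For the converse, I would start from the weak-solution identity, which by assumption holds $\mathbb{P}$--almost surely for each fixed $\zeta \in \mathcal{D}(A^*)$:
\begin{align*}
\langle \zeta, X_{t \wedge \tau} \rangle = \langle \zeta, h_0 \rangle + \int_{0}^{t \wedge \tau} \big( \langle A^* \zeta, X_s \rangle + \langle \zeta, \alpha(s, X_s) \rangle \big) ds + \int_{0}^{t \wedge \tau} \langle \zeta, \sigma(s, X_s) \rangle dW_s.
\end{align*}
The first observation is that condition (\ref{X-in-domain}), read over all $t \geq 0$, forces $X_s \in \mathcal{D}(A)$ for every $s \in [0, t \wedge \tau]$: as $t$ ranges over $\mathbb{R}_+$ the truncation $t \wedge \tau$ exhausts $[0,\tau]$, so the path lies in $\mathcal{D}(A)$ throughout and $\langle A^* \zeta, X_s \rangle = \langle \zeta, A X_s \rangle$. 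Next I would pull $\zeta$ out of both integrals: out of the Bochner integral because $\zeta$ is fixed and the integrand $A X_s + \alpha(s,X_s)$ is path-wise integrable by (\ref{AX-int}) together with the standing integrability assumption in Definition~\ref{def-solutions}, and out of the It\^o integral by the same commutation of a bounded functional with the stochastic integral already used in the proof of Proposition~\ref{prop-stark-schwach}. This yields
\begin{align*}
\langle \zeta, X_{t \wedge \tau} \rangle = \Big\langle \zeta, h_0 + \int_{0}^{t \wedge \tau} \big( A X_s + \alpha(s, X_s) \big) ds + \int_{0}^{t \wedge \tau} \sigma(s, X_s) dW_s \Big\rangle
\end{align*}
for every $\zeta \in \mathcal{D}(A^*)$.

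To finish, I would invoke density. Since $A$ is the generator of a $C_0$-semigroup, it is densely defined and closed by Proposition~\ref{prop-generator-dense}, whence $A^*$ is densely defined by Proposition~\ref{prop-A-adj-dicht}; that is, $\mathcal{D}(A^*)$ is dense in $H$. Fixing a single full-measure event on which the displayed identity holds simultaneously for all $\zeta$ in a fixed countable dense subset of $\mathcal{D}(A^*)$ and for all rational $t$, continuity of the inner product and path-continuity of both sides in $t$ upgrade this to an equality of the two $H$-valued expressions for all $\zeta \in H$ and all $t \geq 0$; letting $\zeta$ range over $H$ then identifies the two vectors, i.e.
\begin{align*}
X_{t \wedge \tau} = h_0 + \int_{0}^{t \wedge \tau} \big( A X_s + \alpha(s, X_s) \big) ds + \int_{0}^{t \wedge \tau} \sigma(s, X_s) dW_s.
\end{align*}
Together with the already-available (\ref{X-in-domain}) and (\ref{AX-int}), this exhibits $X$ as a local strong solution.

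The genuinely delicate point is not the algebra, which merely repeats the computation of Proposition~\ref{prop-stark-schwach} after the substitution $\langle A^* \zeta, X_s \rangle = \langle \zeta, A X_s \rangle$; it is the measure-theoretic bookkeeping. The weak-solution equation is only asserted \emph{separately} for each $\zeta$ and each $t$, each on its own $\mathbb{P}$-null exceptional set, so I must assemble one exceptional set of measure zero outside of which the $H$-valued equality holds for \emph{all} $t$ at once. This is exactly where separability of $H$ (to reduce to countably many $\zeta$) and path-continuity in $t$ (to reduce to rational $t$) are indispensable, and carrying out this reduction carefully is the main obstacle.
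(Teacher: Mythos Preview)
Your proposal is correct and follows essentially the same route as the paper: use (\ref{X-in-domain}) to rewrite $\langle A^*\zeta,X_s\rangle=\langle\zeta,AX_s\rangle$, pull $\zeta$ out of both integrals, and conclude by density of $\mathcal{D}(A^*)$ via Proposition~\ref{prop-A-adj-dicht}. The only difference is that you spell out the null-set bookkeeping (countable dense $\zeta$'s, rational $t$'s, path continuity) which the paper simply elides.
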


\begin{proof}
(1) $\Rightarrow$ (2): This implication is a direct consequence of Proposition~\ref{prop-stark-schwach}.

\noindent(2) $\Rightarrow$ (1): Let $\zeta \in \mathcal{D}(A^*)$ be arbitrary. Then we have $\mathbb{P}$--almost surely for all $t \geq 0$ the identities
\begin{align*}
\langle \zeta, X_{t \wedge \tau} \rangle &= \langle \zeta, h_0 \rangle + \int_{0}^{t \wedge \tau} \big( \langle A^* \zeta, X_s \rangle + \langle \zeta, \alpha(s, X_s) \rangle \big) ds + \int_{0}^{t \wedge \tau} \langle \zeta, \sigma(s, X_s) \rangle dW_s
\\ &= \langle \zeta, h_0 \rangle + \int_{0}^{t \wedge \tau} \langle \zeta, AX_s + \alpha(s, X_s) \rangle ds + \int_{0}^{t \wedge \tau} \langle \zeta, \sigma(s, X_s) \rangle dW_s
\\ &= \Big\langle \zeta, h_0 + \int_{0}^{t \wedge \tau} \big( A X_s + \alpha(s, X_s) \big) ds + \int_{0}^{t \wedge \tau} \sigma(s, X_s) dW_s \Big\rangle.
\end{align*}
By Proposition~\ref{prop-A-adj-dicht} the domain $\mathcal{D}(A^*)$ is dense in $H$, and hence we obtain $\mathbb{P}$--almost surely
\begin{align*}
X_{t \wedge \tau} &= h_0 + \int_{0}^{t \wedge \tau} \big( A X_s + \alpha(s, X_s) \big) ds + \int_{0}^{t \wedge \tau} \sigma(s, X_s) dW_s, \quad t \geq 0.
\end{align*}
Consequently, the process $X$ is also a local strong solution to (\ref{SPDE}) with lifetime $\tau$.
\end{proof}

\begin{corollary}\label{cor-strong-sol-manifold}
Let $\mathcal{M} \subset \mathcal{D}(A)$ be a subset such that $A$ is continuous on $\mathcal{M}$, and let $X$ be a local weak solution to (\ref{SPDE}) with lifetime $\tau$ such that
\begin{align}\label{X-in-M}
X_{t \wedge \tau} \in \mathcal{M} \quad \text{for all $t \geq 0$,} \quad \text{$\mathbb{P}$--almost surely.}
\end{align}
Then $X$ is also a local strong solution to (\ref{SPDE}) with lifetime $\tau$.
\end{corollary}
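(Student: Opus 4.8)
The plan is to invoke the equivalence in Proposition~\ref{prop-strak-schwach-eq}. Since $X$ is by hypothesis a local weak solution to (\ref{SPDE}) with lifetime $\tau$, in order to conclude that $X$ is a local strong solution it suffices to establish the two additional conditions (\ref{X-in-domain}) and (\ref{AX-int}), together with the normalization $X_0 = h_0$ required by that proposition. The latter is immediate: evaluating the weak-solution identity at $t = 0$, where $t \wedge \tau = 0$ because $\tau > 0$, gives $\langle \zeta, X_0 \rangle = \langle \zeta, h_0 \rangle$ for every $\zeta \in \mathcal{D}(A^*)$, and since $\mathcal{D}(A^*)$ is dense in $H$ by Proposition~\ref{prop-A-adj-dicht}, we obtain $X_0 = h_0$ $\mathbb{P}$--almost surely.

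Condition (\ref{X-in-domain}) is a direct consequence of the assumption (\ref{X-in-M}) together with the inclusion $\mathcal{M} \subset \mathcal{D}(A)$: the stopped values $X_{t \wedge \tau}$ lie in $\mathcal{M}$, hence in $\mathcal{D}(A)$, for all $t \geq 0$ outside a single $\mathbb{P}$--null set.

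For the integrability condition (\ref{AX-int}) I would argue pathwise. Fix $\omega$ outside the exceptional null sets and fix $t \geq 0$. Reading (\ref{X-in-M}) as a statement holding simultaneously for all times shows that the stopped path $s \mapsto X_{s \wedge \tau}(\omega)$ takes its values in $\mathcal{M}$; in particular $X_s(\omega) \in \mathcal{M}$ for every $s \in [0, t \wedge \tau(\omega)]$. Since $X$ is a continuous process, the map $[0, t \wedge \tau(\omega)] \rightarrow H$, $s \mapsto X_s(\omega)$ is continuous with values in $\mathcal{M}$, and because $A$ is continuous on $\mathcal{M}$ by hypothesis, the composition $s \mapsto A X_s(\omega)$ is continuous on the compact interval $[0, t \wedge \tau(\omega)]$. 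A continuous $H$-valued map on a compact interval is bounded, so $\int_0^{t \wedge \tau(\omega)} \| A X_s(\omega) \| \, ds < \infty$, which is precisely (\ref{AX-int}). With (\ref{X-in-domain}) and (\ref{AX-int}) in hand, Proposition~\ref{prop-strak-schwach-eq} yields that $X$ is a local strong solution to (\ref{SPDE}) with lifetime $\tau$.

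The main point to handle carefully is the pathwise reading of the almost-sure hypothesis (\ref{X-in-M}): it must be used to conclude that the \emph{entire} stopped trajectory, and not merely the terminal value $X_{t \wedge \tau}$, lies in $\mathcal{M}$, for only then does the continuity of $A|_{\mathcal{M}}$ combine with the path continuity of $X$ to deliver finiteness of the integral. Once this is recognized the argument is routine, and no growth estimate on the semigroup or on $A$ itself is needed -- continuity of $A$ merely on the set $\mathcal{M}$ where the solution lives suffices.
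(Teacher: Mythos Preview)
Your proof is correct and follows essentially the same route as the paper's: verify (\ref{X-in-domain}) from $\mathcal{M} \subset \mathcal{D}(A)$, obtain (\ref{AX-int}) from the continuity of $A|_{\mathcal{M}}$ composed with the continuous stopped path, and invoke Proposition~\ref{prop-strak-schwach-eq}. You are slightly more careful than the paper in that you also check the hypothesis $X_0 = h_0$ of Proposition~\ref{prop-strak-schwach-eq} and make explicit the pathwise compactness argument behind (\ref{AX-int}); the paper simply asserts that the sample paths of $AX$ are $\mathbb{P}$--almost surely continuous and concludes.
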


\begin{proof}
Since $\mathcal{M} \subset \mathcal{D}(A)$, condition (\ref{X-in-M}) implies that (\ref{X-in-domain}) is fulfilled. Moreover, by the continuity of $A$ on $\mathcal{M}$, the sample paths of the process $A X$ are $\mathbb{P}$--almost surely continuous, and hence, we obtain (\ref{AX-int}). Consequently, using Proposition~\ref{prop-strak-schwach-eq}, the process $X$ is also a local strong solution to (\ref{SPDE}) with lifetime $\tau$.
\end{proof}

\begin{proposition}\label{prop-stark-mild}
Every strong solution $X$ to (\ref{SPDE}) is also a mild solution to (\ref{SPDE}).
\end{proposition}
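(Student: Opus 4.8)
The plan is to fix an arbitrary time $t \geq 0$ and apply It\^{o}'s formula (Theorem~\ref{thm-Ito}) to the composition $s \mapsto f(s, X_s)$ on the interval $[0,t]$, where
\begin{align*}
f : [0,t] \times H \rightarrow H, \quad f(s,x) := S_{t-s} x.
\end{align*}
Since $X$ is a strong solution, in the notation of Theorem~\ref{thm-Ito} its dynamics are given by $Y_s = A X_s + \alpha(s, X_s)$ and $Z_s = \sigma(s, X_s)$, and all occurring integrals are finite by the integrability conditions of Definition~\ref{def-solutions}.

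First I would record the derivatives of $f$. By linearity in the second argument, $D_x f(s,x) = S_{t-s}$ and $D_{xx} f = 0$, while Lemma~\ref{lemma-hg-rules}(1) yields the time derivative $D_s f(s,x) = -A S_{t-s} x = -S_{t-s} A x$ for $x \in \mathcal{D}(A)$. Because $X$ is a strong solution, $X_s \in \mathcal{D}(A)$ for all $s$, so these expressions are well-defined along the trajectory. Substituting into It\^{o}'s formula and using $f(0, X_0) = S_t h_0$, the drift contributions (with integration variable $r$) combine to
\begin{align*}
-S_{t-r} A X_r + S_{t-r} \big( A X_r + \alpha(r, X_r) \big) = S_{t-r} \alpha(r, X_r),
\end{align*}
so that
\begin{align*}
S_{t-s} X_s = S_t h_0 + \int_0^s S_{t-r} \alpha(r, X_r) \, dr + \int_0^s S_{t-r} \sigma(r, X_r) \, dW_r.
\end{align*}
Evaluating at $s = t$ and using $S_0 = {\rm Id}$ then yields exactly the mild solution identity of Definition~\ref{def-solutions}.

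The main obstacle is that $f$ does not genuinely satisfy the hypotheses of Theorem~\ref{thm-Ito}: its time derivative $-A S_{t-s} x$ involves the unbounded operator $A$, and since $(S_t)_{t \geq 0}$ is only strongly, not norm, continuous, the map $s \mapsto S_{t-s}$ is not differentiable in the operator norm, so $f \notin C_b^{1,2,{\rm loc}}$. I would remove this difficulty by a regularization argument based on the Yosida approximations $A_n := n A R(n,A) \in L(H)$ (well-defined for large $n$ by the Hille-Yosida theorem), which are bounded, generate the norm continuous semigroups $(e^{s A_n})_{s \geq 0}$, and satisfy $A_n x \rightarrow A x$ for every $x \in \mathcal{D}(A)$ as $n \rightarrow \infty$. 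Applying It\^{o}'s formula to the genuinely smooth functions $f_n(s,x) := e^{(t-s) A_n} x$ gives the approximate identity, after which I would pass to the limit $n \rightarrow \infty$, using $X_s \in \mathcal{D}(A)$ together with dominated convergence for the Lebesgue integral and the It\^{o} isometry (\ref{Ito-isom}) to control the stochastic integral. Alternatively, the smoothing provided by Proposition~\ref{prop-D-A2} allows one to carry out the computation on the Hilbert space $(\mathcal{D}(A), \| \cdot \|_{\mathcal{D}(A)})$, where the relevant maps become differentiable.
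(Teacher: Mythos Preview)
Your overall strategy and the derivative computation are exactly the paper's: apply It\^{o}'s formula to $f(s,x) = S_{t-s} x$ so that $-A S_{t-s} X_s$ cancels $S_{t-s} A X_s$ in the drift. The only difference is how the regularity obstacle is handled. The paper uses precisely your second alternative: invoking Lemma~\ref{lemma-domain}, it views $f$ as a map $[0,t] \times \mathcal{D}(A) \to H$, where $(\mathcal{D}(A), \|\cdot\|_{\mathcal{D}(A)})$ carries the graph norm; since $A$ is bounded from this space into $H$, the map $f$ is genuinely of class $C_b^{1,2,{\rm loc}}$ and It\^{o}'s formula applies at once, with no approximation step. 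Your primary route through the Yosida approximations $A_n$ is a valid alternative: it avoids having to regard $X$ as an It\^{o} process in the graph-norm topology, at the price of the limit passage you describe (the convergence of the stochastic integrals follows from $e^{(t-r)A_n} \to S_{t-r}$ strongly with uniform operator bounds, together with dominated convergence and, after a localization, the isometry~(\ref{Ito-isom})). Both arguments close the gap.
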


\begin{proof}
According to Lemma~\ref{lemma-domain}, the domain $(\mathcal{D}(A), \| \cdot \|_{\mathcal{D}(A)})$ endowed with the graph norm is a separable Hilbert space, too. Hence, by Lemma~\ref{lemma-hg-rules}, for all $t \geq 0$ the function
\begin{align*}
f : [0, t] \times \mathcal{D}(A) \rightarrow H, \quad f(s, x) := S_{t-s} x.
\end{align*}
belongs to the class $C_b^{1,2,{\rm loc}}([0, t] \times \mathcal{D}(A); H)$ with partial derivatives
\begin{align*}
D_t f(t, x) &= -A S_{t-s} x,
\\ D_x f(t, x) &= S_{t-s},
\\ D_{xx} f(t, x) &= 0.
\end{align*}
Hence, by It\^{o}'s formula (see Theorem~\ref{thm-Ito}) and Lemma~\ref{lemma-hg-rules} we obtain $\mathbb{P}$--almost surely
\begin{align*}
X_t &= f(t, X_t) = f(0, h_0) + \int_0^t \big( D_s f(s, X_s) + D_x f(s, X_s) (A X_s + \alpha(s, X_s) ) \big) ds
\\ &\quad + \int_0^t D_x f(s, X_s) \sigma(s, X_s) dW_s
\\ &= S_t h_0 + \int_0^t \big( -A S_{t-s} X_s + S_{t-s} ( A X_s + \alpha(s, X_s) ) \big) ds + \int_0^t S_{t-s} \sigma(s, X_s) dW_s
\\ &= S_{t} h_0 + \int_{0}^{t} S_{t - s} \alpha(s, X_s) ds + \int_{0}^{t} S_{t - s} \sigma(s, X_s) dW_s.
\end{align*} 
Thus, $X$ is also a mild solution to (\ref{SPDE}).
\end{proof}

We recall the following technical auxiliary result without proof and refer, e.g., to \cite[Section~3.1]{Atma-book}.

\begin{lemma}\label{lemma-dicht-C1-D-adj}
Let $T \geq 0$ be arbitrary. Then the linear space
\begin{align*}
U_T := {\rm lin} \, \{ g \zeta : g \in C^1([0, T]; \mathbb{R}) \text{ and } \zeta \in \mathcal{D}(A^*) \}
\end{align*}
is dense in $C^1([0, T], \mathcal{D}(A^*))$, where $(\mathcal{D}(A^*), \| \cdot \|_{\mathcal{D}(A^*)})$ is endowed with the graph norm.
\end{lemma}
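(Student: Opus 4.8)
The plan is to identify $U_T$ as the algebraic tensor product $C^1([0,T];\mathbb{R}) \otimes E$, where I write $E := (\mathcal{D}(A^*), \|\cdot\|_{\mathcal{D}(A^*)})$ for the domain of the adjoint endowed with the graph norm. By Proposition~\ref{prop-A-adj-dicht} the operator $A^*$ is densely defined and closed, so Lemma~\ref{lemma-domain} applies to $A^*$ and shows that $E$ is a separable Hilbert space. On $C^1([0,T];E)$ I use the norm $\|f\|_{C^1} := \sup_{t\in[0,T]} \|f(t)\|_E + \sup_{t\in[0,T]}\|f'(t)\|_E$, and I note that every $g\zeta$ with $g\in C^1([0,T];\mathbb{R})$ and $\zeta\in\mathcal{D}(A^*)$ lies in $C^1([0,T];E)$, with derivative $g'\zeta$; hence $U_T \subseteq C^1([0,T];E)$. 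The strategy is to reduce the $C^1$-approximation of a given $f$ to a uniform ($C^0$) approximation of its derivative, and then to recover $f$ by integration, which keeps the approximant inside $U_T$.

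So fix $f \in C^1([0,T];E)$ and $\varepsilon > 0$, and set $\phi := f'$, a continuous $E$-valued map. First I would use that $\phi([0,T])$ is compact, hence totally bounded, in $E$: covering it by finitely many $\varepsilon$-balls and applying Gram--Schmidt to the centres, I obtain an orthonormal system $\zeta_1,\dots,\zeta_m \in E$ with $\sup_t \mathrm{dist}_E(\phi(t),\mathrm{span}\{\zeta_1,\dots,\zeta_m\}) < \varepsilon$. Writing $P$ for the orthogonal projection of $E$ onto $\mathrm{span}\{\zeta_i\}$, I have $P\phi(t) = \sum_{i=1}^m c_i(t)\zeta_i$ with scalar coefficients $c_i(t) = \langle \phi(t),\zeta_i\rangle_E$, each continuous on $[0,T]$. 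The classical Weierstrass theorem then furnishes polynomials $\psi_i$ approximating the $c_i$ uniformly; since the $\zeta_i$ are orthonormal, the function $h := \sum_{i=1}^m \psi_i \zeta_i$ satisfies $\sup_t \|\phi(t) - h(t)\|_E < 2\varepsilon$ after adjusting the constants. Crucially $\psi_i \in C^1([0,T];\mathbb{R})$ and $\zeta_i \in \mathcal{D}(A^*)$.

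Next I would reconstruct an approximant to $f$ itself by integrating $h$. Set
\[
\tilde f(t) := f(0) + \int_0^t h(s)\,ds = f(0) + \sum_{i=1}^m \Big(\int_0^t \psi_i(s)\,ds\Big)\zeta_i .
\]
Since $t\mapsto \int_0^t \psi_i(s)\,ds$ is $C^1$ and $f(0)\in\mathcal{D}(A^*)$ is the constant $1\cdot f(0)$, the function $\tilde f$ belongs to $U_T$. By construction $\tilde f' = h$, so $\sup_t\|\tilde f'(t)-f'(t)\|_E < 2\varepsilon$, while the fundamental theorem of calculus gives $\tilde f(t)-f(t) = \int_0^t (h(s)-\phi(s))\,ds$, whence $\sup_t\|\tilde f(t)-f(t)\|_E \le 2\varepsilon T$. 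Combining the two bounds yields $\|\tilde f - f\|_{C^1} \le 2\varepsilon(T+1)$, which proves that $U_T$ is dense.

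The hard part will be the uniform approximation of the continuous $E$-valued derivative $\phi$ by a finite tensor sum $\sum_i \psi_i\zeta_i$; this is where the Hilbert space structure of $E$ (Lemma~\ref{lemma-domain}) enters, via compactness of $\phi([0,T])$ together with the finite-dimensional orthogonal projection, reducing matters to the scalar Weierstrass theorem. The remaining steps --- the passage to $\tilde f$ and the two norm estimates --- are routine. An alternative that avoids the projection argument is to apply the vector-valued Bernstein operator $B_n f(t) = \sum_{k=0}^n \binom{n}{k}(t/T)^k(1-t/T)^{n-k} f(kT/n)$, whose summands are already of the form (polynomial)$\cdot$(vector in $\mathcal{D}(A^*)$), and to invoke the classical fact that for $f\in C^1$ both $B_n f\to f$ and $(B_n f)'\to f'$ uniformly; this handles $f$ and $f'$ simultaneously, at the cost of quoting the $C^1$-convergence of Bernstein polynomials in the Banach-space-valued setting.
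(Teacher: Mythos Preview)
The paper does not actually prove this lemma: it states the result and refers the reader to \cite[Section~3.1]{Atma-book}. So there is no in-paper argument to compare against, and your proposal supplies what the paper omits.

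Your argument is correct. The reduction to approximating the derivative $\phi=f'$ in $C([0,T];E)$, followed by integration to recover an element of $U_T$, is clean and does exactly what is needed; the compactness-plus-projection step is the right way to reduce to the scalar Weierstrass theorem, and the final estimates are routine. One minor remark on citations: Proposition~\ref{prop-A-adj-dicht} gives only that $A^*$ is densely defined (and $A=A^{**}$), not that $A^*$ is closed. To justify applying Lemma~\ref{lemma-domain} to $A^*$ you should either invoke the standard fact that adjoints of densely defined operators are always closed, or --- staying entirely within the paper --- combine Proposition~\ref{prop-hg-adj} (so $A^*$ generates the $C_0$-semigroup $(S_t^*)_{t\ge0}$) with Proposition~\ref{prop-generator-dense} (so $A^*$ is closed). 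With that adjustment the proof is complete.
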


\begin{lemma}\label{lemma-id-schwach-mild}
Let $X$ be a weak solution to (\ref{SPDE}). Then for all $T \geq 0$ and all $f \in C^1([0, T], \mathcal{D}(A^*))$ we have $\mathbb{P}$--almost surely
\begin{equation}\label{Ito-formula-mild}
\begin{aligned}
\langle f(t), X_{t} \rangle &= \langle f(0), h_0 \rangle
+ \int_0^{t} \big( \langle f'(s) + A^* f(s), X_s \rangle
+ \langle f(s), \alpha(s, X_s) \rangle \big) ds
\\ & \quad + \int_0^{t} \langle f(s), \sigma(s, X_s) \rangle d
W_s, \quad t \in [0,T].
\end{aligned}
\end{equation}
\end{lemma}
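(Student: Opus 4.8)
The plan is to establish the identity first for the elementary test functions $f = g\zeta$ with $g \in C^1([0,T];\mathbb{R})$ and $\zeta \in \mathcal{D}(A^*)$, then extend it by linearity to the space $U_T$ of Lemma~\ref{lemma-dicht-C1-D-adj}, and finally pass to the limit for a general $f \in C^1([0,T],\mathcal{D}(A^*))$ using the density of $U_T$. The point of working with weak solutions is that $X_s$ need not lie in $\mathcal{D}(A)$, so one cannot apply It\^o's formula to $X$ directly with the function $(s,x) \mapsto \langle f(s), x \rangle$; instead one reduces everything to the real-valued semimartingales $\langle \zeta, X \rangle$, whose dynamics are supplied by the weak formulation.

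For $f = g\zeta$, observe that $\langle f(t), X_t \rangle = g(t) Y_t$ with $Y_t := \langle \zeta, X_t \rangle$. The definition of a weak solution (here $\tau \equiv \infty$) says precisely that $Y$ is a real-valued It\^o process, namely $Y_t = \langle \zeta, h_0 \rangle + \int_0^t ( \langle A^* \zeta, X_s \rangle + \langle \zeta, \alpha(s, X_s) \rangle )\,ds + \int_0^t \langle \zeta, \sigma(s, X_s) \rangle\,dW_s$. Applying It\^o's formula (Theorem~\ref{thm-Ito}) to $Y$ with the function $(s,y) \mapsto g(s) y$, whose second $y$-derivative vanishes, yields $g(t) Y_t = g(0) Y_0 + \int_0^t g'(s) Y_s\,ds + \int_0^t g(s)\,dY_s$. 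Substituting the drift and diffusion parts of $Y$ and using $f'(s) = g'(s)\zeta$ and $A^* f(s) = g(s) A^* \zeta$ reproduces \eqref{Ito-formula-mild} exactly. By taking finite linear combinations, the identity then holds $\mathbb{P}$-almost surely for every $f \in U_T$.

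For the final step I would, by Lemma~\ref{lemma-dicht-C1-D-adj}, choose $f_n \in U_T$ with $f_n \to f$ in $C^1([0,T],\mathcal{D}(A^*))$; since the graph norm controls $\|\cdot\|_H$ and $\|A^* \cdot\|_H$, this gives the uniform convergences $\sup_{s \leq T}\|f_n(s) - f(s)\|_H \to 0$, $\sup_{s \leq T}\|f_n'(s) - f'(s)\|_H \to 0$ and $\sup_{s \leq T}\|A^* f_n(s) - A^* f(s)\|_H \to 0$. The identity \eqref{Ito-formula-mild} holds a.s.\ for each $f_n$, and I pass to the limit termwise. The boundary terms converge by uniform convergence in $H$. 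The two Lebesgue integrals converge pathwise by dominated convergence: the integrands converge uniformly in $s$, and the finiteness of $\int_0^T ( \|X_s\| + \|\alpha(s, X_s)\| )\,ds$ from Definition~\ref{def-solutions} provides the domination.

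The hard part is the stochastic integral, because only the pathwise bound $\int_0^T \|\sigma(s, X_s)\|_{L_2^0(H)}^2\,ds < \infty$ is available, so the It\^o isometry \eqref{Ito-isom} cannot be invoked directly. Here I would use the pointwise estimate $\|\langle \zeta, \Phi \rangle\|_{L_2^0(\mathbb{R})} \leq \|\zeta\|\,\|\Phi\|_{L_2^0(H)}$, which gives $\int_0^T \|\langle f_n(s) - f(s), \sigma(s, X_s) \rangle\|_{L_2^0(\mathbb{R})}^2\,ds \leq ( \sup_{s \leq T}\|f_n(s) - f(s)\|_H^2 ) \int_0^T \|\sigma(s, X_s)\|_{L_2^0(H)}^2\,ds \to 0$ $\mathbb{P}$-almost surely. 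Combined with the standard maximal estimate $\mathbb{P}( \sup_{t \leq T} | \int_0^t \Phi_s\,dW_s | > \varepsilon ) \leq \delta/\varepsilon^2 + \mathbb{P}( \int_0^T \|\Phi_s\|_{L_2^0(\mathbb{R})}^2\,ds > \delta )$, valid for locally integrable integrands, this yields $\sup_{t \leq T} | \int_0^t \langle f_n(s) - f(s), \sigma(s, X_s) \rangle\,dW_s | \to 0$ in probability. Passing to a subsequence along which this supremum tends to $0$ almost surely, every term converges uniformly in $t$; since both sides of \eqref{Ito-formula-mild} are continuous in $t$, the limiting identity holds $\mathbb{P}$-almost surely simultaneously for all $t \in [0,T]$.
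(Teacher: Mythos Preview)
Your proof is correct and follows essentially the same approach as the paper: reduce to $f \in U_T$ via the density Lemma~\ref{lemma-dicht-C1-D-adj}, and for such $f$ apply It\^{o}'s formula to the real-valued (respectively $\mathbb{R}^n$-valued) It\^{o} process $\langle \zeta, X \rangle$ supplied by the weak formulation. You are in fact more careful than the paper, which simply asserts ``it suffices to prove formula~(\ref{Ito-formula-mild}) for all $f \in U_T$'' without spelling out the limiting step; your treatment of the stochastic integral via the pathwise bound $\int_0^T \| \sigma(s,X_s) \|_{L_2^0(H)}^2\,ds < \infty$, the Lenglart-type maximal inequality, and passage to an almost surely convergent subsequence fills precisely the gap the paper leaves implicit.
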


\begin{proof}
By virtue of Lemma~\ref{lemma-dicht-C1-D-adj}, it suffices to prove formula (\ref{Ito-formula-mild}) for all $f \in U_T$. Let $f \in U_T$ be arbitrary. Then there are $g_1,\ldots,g_n \in C^1([0, T]; \mathbb{R})$ and $\zeta_1,\ldots,\zeta_n \in \mathcal{D}(A^*)$ for some $n \in \mathbb{N}$ such that
\begin{align*}
f(t) = \sum_{i=1}^n g_i(t) \zeta_i, \quad t \in [0, T].
\end{align*}
We define the function
\begin{align*}
F : [0, T] \times \mathbb{R}^n \rightarrow \mathbb{R}, \quad F(t, x) := \sum_{i=1}^n g_i(t) x_i. 
\end{align*}
Then we have $F \in C^{1,2}([0, T] \times \mathbb{R}^n; \mathbb{R})$ with partial derivatives
\begin{align*}
D_t F(t, x) &= \sum_{i=1}^n g_i'(t) x_i,
\\ D_x F(t, x) &= \langle g(t), \bullet \rangle_{\mathbb{R}^n},
\\ D_{xx} F(t, x) &= 0.
\end{align*}
Since $X$ is a weak solution to (\ref{SPDE}), the $\mathbb{R}^n$-valued process
\begin{align*}
\langle \zeta, X \rangle := \langle \zeta_i, X \rangle_{i=1,\ldots,n} 
\end{align*}
is an It\^{o} process with representation
\begin{align*}
\langle \zeta, X_{t} \rangle &= \langle \zeta, h_0 \rangle + \int_{0}^{t} \big( \langle A^* \zeta, X_s \rangle + \langle \zeta, \alpha(s, X_s) \rangle \big) ds + \int_{0}^{t} \langle \zeta, \sigma(s, X_s) \rangle dW_s, \quad t \geq 0.
\end{align*}
By It\^{o}'s formula (Theorem~\ref{thm-Ito}) we obtain $\mathbb{P}$--almost surely
\begin{align*}
&\langle f(t), X_t \rangle = \Big\langle \sum_{i=1}^n g_i(t) \zeta_i, X_t \Big\rangle = \sum_{i=1}^n g_i(t) \langle \zeta_i, X_t \rangle = F(t, \langle \zeta, X_t \rangle)
\\ &= F(0, \langle \zeta, h_0 \rangle ) 
\\ &\quad + \int_0^t \big( D_s F(s, \langle \zeta, X_s \rangle ) + D_x F(s, \langle \zeta, X_s \rangle) \big( \langle A^* \zeta, X_s \rangle + \langle \zeta, \alpha(s, X_s) \rangle \big) \big) ds
\\ &\quad + \int_0^t D_x F(s, \langle \zeta, X_s \rangle) \langle \zeta, \sigma(s, X_s) \rangle dW_s
\\ &= \sum_{i=1}^n g_i(0) \langle \zeta_i, h_0 \rangle 
\\ &\quad + \int_0^t \bigg( \sum_{i=1}^n g_i'(t) \langle \zeta_i, X_s \rangle + \sum_{i=1}^n g_i(t) \big( \langle A^* \zeta_i, X_s \rangle + \langle \zeta_i, \alpha(s, X_s) \rangle \big) \bigg) ds
\\ &\quad + \int_0^t \bigg( \sum_{i=1}^n g_i(s) \langle \zeta_i, \sigma(s, X_s) \rangle \bigg) dW_s \quad t \in [0, T],
\end{align*}
and hence
\begin{align*}
&\langle f(t), X_t \rangle = \Big\langle \sum_{i=1}^n g_i(0) \zeta_i, h_0 \Big\rangle 
\\ &\quad + \int_0^t \bigg( \Big\langle \sum_{i=1}^n g_i'(s) \zeta_i, X_s \Big\rangle + \Big\langle A^* \Big( \sum_{i=1}^n g_i(s) \zeta_i \Big), X_s \Big\rangle + \Big\langle \sum_{i=1}^n g_i(s) \zeta_i, \alpha(s, X_s) \Big\rangle \bigg) ds
\\ &\quad + \int_0^t \Big\langle \sum_{i=1}^n g_i(s) \zeta_i, \sigma(s, X_s) \Big\rangle dW_s
\\ &= \langle f(0), h_0 \rangle
+ \int_0^{t} \Big( \langle f'(s) + A^* f(s), X_s \rangle
+ \langle f(s), \alpha(s, X_s) \rangle \Big) ds
\\ & \quad + \int_0^{t} \langle f(s), \sigma(s, X_s) \rangle d
W_s, \quad t \in [0, T].
\end{align*}
This concludes the proof.
\end{proof}

\begin{proposition}\label{prop-schwach-mild}
Every weak solution $X$ to (\ref{SPDE}) is also a mild solution to (\ref{SPDE}).
\end{proposition}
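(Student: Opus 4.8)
\textit{Strategy.} The plan is to feed the weak Itô formula of Lemma~\ref{lemma-id-schwach-mild} a time-dependent test function built from the adjoint semigroup, chosen precisely so that the term involving $A^*$ cancels and only the mild-solution terms remain. Fix $t \geq 0$ and take $\zeta \in \mathcal{D}((A^*)^2)$; consider the function $f : [0,t] \to \mathcal{D}(A^*)$ given by $f(s) := S_{t-s}^* \zeta$. By Proposition~\ref{prop-hg-adj} the family $(S_r^*)_{r \geq 0}$ is a $C_0$-semigroup on $H$ with generator $A^*$, so Proposition~\ref{prop-D-A2}, applied to this adjoint semigroup, shows that $(S_r^*|_{\mathcal{D}(A^*)})_{r \geq 0}$ is a $C_0$-semigroup on $(\mathcal{D}(A^*), \| \cdot \|_{\mathcal{D}(A^*)})$ whose generator has domain $\mathcal{D}((A^*)^2)$. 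Combined with Lemma~\ref{lemma-hg-rules}(1) this yields that $r \mapsto S_r^* \zeta$ is of class $C^1$ with values in $\mathcal{D}(A^*)$, and hence $f \in C^1([0,t], \mathcal{D}(A^*))$, as required to invoke Lemma~\ref{lemma-id-schwach-mild}.

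\textit{The cancellation.} The whole point is the choice of $f$. Using Lemma~\ref{lemma-hg-rules}(1) for the adjoint semigroup together with the chain rule, I would compute $f'(s) = -A^* S_{t-s}^* \zeta = -A^* f(s)$, so that $f'(s) + A^* f(s) = 0$ identically on $[0,t]$. Substituting $f$ into formula (\ref{Ito-formula-mild}) therefore annihilates the integrand $\langle f'(s) + A^* f(s), X_s \rangle$, and since $f(t) = \zeta$ and $f(0) = S_t^* \zeta$ we are left with
\begin{align*}
\langle \zeta, X_t \rangle = \langle S_t^* \zeta, h_0 \rangle + \int_0^t \langle S_{t-s}^* \zeta, \alpha(s, X_s) \rangle \, ds + \int_0^t \langle S_{t-s}^* \zeta, \sigma(s, X_s) \rangle \, dW_s.
\end{align*}
Invoking the defining relation $\langle S_r^* \zeta, \bullet \rangle = \langle \zeta, S_r \bullet \rangle$ of the adjoint, and the fact that the It\^o integral commutes with the bounded functional $\langle \zeta, \bullet \rangle$, I would rewrite the right-hand side as $\langle \zeta, Y_t \rangle$, where
\begin{align*}
Y_t := S_t h_0 + \int_0^t S_{t-s} \alpha(s, X_s) \, ds + \int_0^t S_{t-s} \sigma(s, X_s) \, dW_s
\end{align*}
is the candidate mild-solution expression.

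\textit{Density and conclusion.} This gives $\langle \zeta, X_t \rangle = \langle \zeta, Y_t \rangle$ $\mathbb{P}$--almost surely for every $\zeta \in \mathcal{D}((A^*)^2)$. To finish I would argue density: since $A$ generates a $C_0$-semigroup it is densely defined and closed (Proposition~\ref{prop-generator-dense}), whence $A^*$ is densely defined by Proposition~\ref{prop-A-adj-dicht}, and a further application of Proposition~\ref{prop-D-A2} to the adjoint semigroup shows $\mathcal{D}((A^*)^2)$ is dense in $\mathcal{D}(A^*)$, hence in $H$. Testing against a countable dense subset of $\mathcal{D}((A^*)^2)$ and using continuity of both sides in $\zeta$ then forces $X_t = Y_t$ $\mathbb{P}$--almost surely, which is exactly the mild solution identity.

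\textit{Main obstacle.} The genuine difficulty is the regularity bookkeeping for the test function rather than the algebra: one must verify that $f(s) = S_{t-s}^* \zeta$ truly lies in $C^1([0,t], \mathcal{D}(A^*))$ for the graph norm, which is precisely why $\zeta$ is restricted to $\mathcal{D}((A^*)^2)$ instead of merely $\mathcal{D}(A^*)$, and one must then separately confirm that $\mathcal{D}((A^*)^2)$ is dense in $H$ so that the passage from the weak (tested) identity to the pathwise identity is justified. A minor additional point is the interchange of the stochastic integral with the inner product against $\zeta$, which is standard but should be mentioned.
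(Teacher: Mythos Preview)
Your proposal is correct and follows essentially the same route as the paper: choose $\zeta \in \mathcal{D}((A^*)^2)$, plug the test function $f(s)=S_{t-s}^*\zeta$ into Lemma~\ref{lemma-id-schwach-mild} so that $f'+A^*f=0$ cancels, shift the adjoint to obtain $\langle \zeta, X_t\rangle = \langle \zeta, Y_t\rangle$, and conclude by density of $\mathcal{D}((A^*)^2)$ in $H$. The paper carries out the density argument in two stages (first $\mathcal{D}((A^*)^2)$ dense in $\mathcal{D}(A^*)$ via Proposition~\ref{prop-generator-dense} applied to the restricted adjoint semigroup, then $\mathcal{D}(A^*)$ dense in $H$), but otherwise the arguments coincide.
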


\begin{proof}
By Proposition~\ref{prop-hg-adj}, the family $(S_t^*)_{t \geq 0}$ is a $C_0$-semigroup with generator $A^*$. Thus, Proposition~\ref{prop-D-A2} yields that the family of restrictions $(S_t^*|_{\mathcal{D}(A^*)})_{t \geq 0}$ is a $C_0$-semigroup on $(\mathcal{D}(A^*), \| \cdot \|_{\mathcal{D}(A^*)})$ with generator $A^* : \mathcal{D}((A^*)^2) \subset \mathcal{D}(A^*) \rightarrow \mathcal{D}((A^*)^2)$. 

Now, let $t \geq 0$ and $\zeta \in \mathcal{D}((A^*)^2)$ be arbitrary. We define the function
\begin{align*}
f : [0, t] \rightarrow \mathcal{D}(A^*), \quad f(s) := S_{t-s}^* \zeta.
\end{align*}
By Lemma~\ref{lemma-hg-rules} we have $f \in C^1([0, t]; \mathcal{D}(A^*))$ with derivative
\begin{align*}
f'(s) = - A^* S_{t-s}^* \zeta = -A^* f(s).
\end{align*}
Using Lemma~\ref{lemma-id-schwach-mild}, we obtain $\mathbb{P}$--almost surely
\begin{align*}
\langle \zeta, X_t \rangle &= \langle f(t), X_t \rangle
\\ &= \langle f(0), h_0 \rangle
+ \int_0^{t} \langle f(s), \alpha(s, X_s) \rangle ds
+ \int_0^{t} \langle f(s), \sigma(s, X_s) \rangle d
W_s
\\ &= \langle S_t^* \zeta, h_0 \rangle
+ \int_0^{t} \langle S_{t-s}^* \zeta, \alpha(s, X_s) \rangle ds
+ \int_0^{t} \langle S_{t-s}^* \zeta, \sigma(s, X_s) \rangle d
W_s
\\ &= \langle \zeta, S_t h_0 \rangle
+ \int_0^{t} \langle \zeta, S_{t-s} \alpha(s, X_s) \rangle ds
+ \int_0^{t} \langle \zeta, S_{t-s} \sigma(s, X_s) \rangle d
W_s
\\ &= \Big\langle \zeta, S_t h_0 + \int_0^{t} S_{t-s} \alpha(s, X_s) ds
+ \int_0^{t} S_{t-s} \sigma(s, X_s) dW_s \Big\rangle.
\end{align*}
Since, by Proposition~\ref{prop-generator-dense}, the domain $\mathcal{D}((A^*)^2)$ is dense in $( \mathcal{D}(A^*), \| \cdot \|_{\mathcal{D}(A^*)} )$, we get $\mathbb{P}$--almost surely for all $\zeta \in \mathcal{D}(A^*)$ the identity
\begin{align*}
\langle \zeta, X_t \rangle = \Big\langle \zeta, S_t h_0 + \int_0^{t} S_{t-s} \alpha(s, X_s) ds
+ \int_0^{t} S_{t-s} \sigma(s, X_s) dW_s \Big\rangle.
\end{align*}
Since, by Proposition~\ref{prop-generator-dense}, the domain $\mathcal{D}(A^*)$ is dense in $H$, we obtain $\mathbb{P}$--almost surely
\begin{align*}
X_t = S_t h_0 + \int_0^{t} S_{t-s} \alpha(s, X_s) ds
+ \int_0^{t} S_{t-s} \sigma(s, X_s) dW_s,
\end{align*}
proving that $X$ is a mild solution to (\ref{SPDE}).
\end{proof}

\begin{remark}
Now, the proof of Proposition~\ref{prop-stark-mild} is an immediate consequence of Propositions~\ref{prop-stark-schwach} and \ref{prop-schwach-mild}.
\end{remark}

We have just seen that every weak solution to (\ref{SPDE}) is also a mild solution. Under the following regularity condition (\ref{Fubini-condition}), the converse of this statement holds true as well.

\begin{proposition}\label{prop-mild-schwach}
Let $X$ be a mild solution to (\ref{SPDE}) such that
\begin{align}\label{Fubini-condition}
\mathbb{E} \bigg[ \int_0^T \| \sigma(s, X_s) \|_{L_2^0(H)}^2 ds \bigg] < \infty \quad \text{for all $T \geq 0$.}
\end{align}
Then $X$ is also a weak solution to (\ref{SPDE}).
\end{proposition}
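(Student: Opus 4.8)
The plan is to verify the defining identity of a weak solution directly. I fix $\zeta \in \mathcal{D}(A^*)$ and $t \geq 0$, start from the drift term $\int_0^t \langle A^* \zeta, X_s \rangle\, ds$ appearing in the weak formulation, substitute the mild representation of $X_s$ into it, and then reduce everything back to the mild representation of $X_t$ by means of the fundamental theorem of calculus together with the classical and stochastic Fubini theorems.

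The computation is driven by one commutation identity, which I would record first. By Proposition~\ref{prop-hg-adj} the adjoint family $(S_u^*)_{u \geq 0}$ is a $C_0$-semigroup with generator $A^*$, so Lemma~\ref{lemma-hg-rules}(1) applied to $(S_u^*)_{u \geq 0}$ gives, for every $\zeta \in \mathcal{D}(A^*)$ and every $y \in H$,
\[
\frac{d}{du} \langle S_u^* \zeta, y \rangle = \langle A^* S_u^* \zeta, y \rangle = \langle S_u^* A^* \zeta, y \rangle = \langle A^* \zeta, S_u y \rangle .
\]
Integrating from $0$ to $u$ yields the key building block
\[
\int_0^u \langle A^* \zeta, S_v y \rangle\, dv = \langle \zeta, S_u y \rangle - \langle \zeta, y \rangle .
\]

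Next I would plug the mild equation into $\langle A^* \zeta, X_s \rangle$ and integrate over $s \in [0,t]$. The deterministic part $\langle A^* \zeta, S_s h_0 \rangle$ integrates, via the building block, to $\langle \zeta, S_t h_0 \rangle - \langle \zeta, h_0 \rangle$. For the drift convolution term I would interchange the two time integrals by the classical Fubini theorem, rewriting $\int_0^t \int_0^s (\cdots)\, dr\, ds = \int_0^t \int_r^t (\cdots)\, ds\, dr$, and then apply the building block to the inner $ds$-integral (after the substitution $v = s - r$), obtaining $\int_0^t \big( \langle \zeta, S_{t-r} \alpha(r, X_r) \rangle - \langle \zeta, \alpha(r, X_r) \rangle \big)\, dr$. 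The stochastic convolution term is treated identically, but with the stochastic Fubini theorem interchanging the $dW_r$- and $ds$-integrals; the inner $ds$-integral again collapses to $\langle \zeta, S_{t-r} \sigma(r, X_r) \rangle - \langle \zeta, \sigma(r, X_r) \rangle$. Collecting the three contributions to $\int_0^t \langle A^* \zeta, X_s \rangle\, ds$ and then adding $\langle \zeta, h_0 \rangle + \int_0^t \langle \zeta, \alpha(s, X_s) \rangle\, ds + \int_0^t \langle \zeta, \sigma(s, X_s) \rangle\, dW_s$, the subtracted pieces cancel exactly, and what remains is the mild representation of $X_t$ paired with $\zeta$, namely $\langle \zeta, X_t \rangle$. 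This is precisely the identity defining a weak solution.

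The main obstacle is the justification of the stochastic Fubini theorem, and this is exactly where hypothesis (\ref{Fubini-condition}) enters: interchanging $\int_0^t (\cdot)\, ds$ with $\int_0^{\,\cdot} (\cdot)\, dW_r$ requires a square-integrability bound on the integrand $(r,s) \mapsto \langle A^* \zeta, S_{s-r} \sigma(r, X_r) \rangle$ over the triangle $\{ 0 \leq r \leq s \leq t \}$. Since $A^* \zeta$ is a fixed vector and $\sup_{u \in [0,t]} \| S_u \| < \infty$ by Lemma~\ref{lemma-wachstum}, this bound reduces to $\mathbb{E} \big[ \int_0^t \| \sigma(s, X_s) \|_{L_2^0(H)}^2\, ds \big] < \infty$, which is precisely (\ref{Fubini-condition}). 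The remaining steps—the classical Fubini theorem for the drift term, and the pathwise integrability needed to make sense of the weak-form integrals—are routine consequences of the integrability conditions already imposed in Definition~\ref{def-solutions} together with the semigroup growth estimate.
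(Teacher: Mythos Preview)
Your proposal is correct and follows essentially the same route as the paper's proof: both start from the weak-formulation right-hand side, substitute the mild representation of $X_s$ into $\int_0^t \langle A^*\zeta, X_s\rangle\,ds$, split into the three pieces (initial condition, drift convolution, stochastic convolution), swap the order of integration via classical and stochastic Fubini, collapse the inner integrals using the identity $\int_0^u \langle A^*\zeta, S_v y\rangle\,dv = \langle \zeta, S_u y - y\rangle$, and observe that the remaining terms reassemble to $\langle \zeta, X_t\rangle$ via the mild equation. The only cosmetic difference is that the paper obtains the building-block identity from Lemma~\ref{lemma-hg-rules}(2) (writing $\langle A^*\zeta,\int_0^u S_v y\,dv\rangle = \langle \zeta, A\int_0^u S_v y\,dv\rangle = \langle \zeta, S_u y - y\rangle$), whereas you derive it by differentiating $u\mapsto\langle S_u^*\zeta,y\rangle$ via the adjoint semigroup; these are equivalent one-line facts.
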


\begin{proof}
Let $t \geq 0$ and $\zeta \in \mathcal{D}(A^*)$ be arbitrary. Using Lemma~\ref{lemma-hg-rules}, we obtain $\mathbb{P}$--almost surely
\begin{align*}
\int_0^t \langle A^* \zeta, S_s h_0 \rangle ds &= \Big\langle A^* \zeta, \underbrace{\int_0^t S_s h_0 ds}_{\in \mathcal{D}(A)} \Big\rangle = \Big\langle \zeta, A \bigg( \int_0^t S_s h_0 ds \bigg) \Big\rangle = \langle \zeta, S_t h_0 - h_0 \rangle
\\ &= \langle \zeta, S_t h_0 \rangle - \langle \zeta, h_0 \rangle.
\end{align*}
By Fubini's theorem for Bochner integrals (see \cite[Section~1.1, page 21]{Atma-book}) and Lemma~\ref{lemma-hg-rules} we obtain $\mathbb{P}$--almost surely
\begin{align*}
&\int_0^t \Big\langle A^* \zeta, \int_0^s S_{s-u} \alpha(u, X_u) du \Big\rangle ds = \Big\langle A^* \zeta, \int_0^t \bigg( \int_0^s S_{s-u} \alpha(u, X_u) du \bigg) ds \Big\rangle
\\ &= \Big\langle A^* \zeta, \int_0^t \bigg( \int_u^t S_{s-u} \alpha(u, X_u) ds \bigg) du \Big\rangle = \int_0^t \Big\langle A^* \zeta, \int_u^t S_{s-u} \alpha(u, X_u) ds \Big\rangle du
\\ &= \int_0^t \Big\langle A^* \zeta, \underbrace{\int_0^{t-s} S_{u} \alpha(s, X_s) du}_{\in \mathcal{D}(A)} \Big\rangle ds = \int_0^t \Big\langle \zeta, A \bigg( \int_0^{t-s} S_{u} \alpha(s, X_s) du \bigg) \Big\rangle ds
\\ &= \int_0^t \langle \zeta, S_{t-s} \alpha(s, X_s) - \alpha(s, X_s) \rangle ds 
\\ &= \Big\langle \zeta, \int_0^t S_{t-s} \alpha(s, X_s) ds \Big\rangle - \int_0^t \langle \zeta, \alpha(s, X_s) \rangle ds.
\end{align*}
Due to assumption (\ref{Fubini-condition}), we may use Fubini's theorem for stochastic integrals (see \cite[Theorem~2.8]{Atma-book}), which, together with Lemma~\ref{lemma-hg-rules} gives us $\mathbb{P}$--almost surely
\begin{align*}
&\int_0^t \Big\langle A^* \zeta, \int_0^s S_{s-u} \sigma(u, X_u) dW_u \Big\rangle ds = \Big\langle A^* \zeta, \int_0^t \bigg( \int_0^s S_{s-u} \sigma(u, X_u) dW_u \bigg) ds \Big\rangle
\\ &= \Big\langle A^* \zeta, \int_0^t \bigg( \int_u^t S_{s-u} \sigma(u, X_u) ds \bigg) dW_u \Big\rangle = \int_0^t \Big\langle A^* \zeta, \int_u^t S_{s-u} \sigma(u, X_u) ds \Big\rangle dW_u
\\ &= \int_0^t \Big\langle A^* \zeta, \underbrace{\int_0^{t-s} S_{u} \sigma(s, X_s) du}_{\in \mathcal{D}(A)} \Big\rangle dW_s = \int_0^t \Big\langle \zeta, A \bigg( \int_0^{t-s} S_{u} \sigma(s, X_s) du \bigg) \Big\rangle dW_s
\\ &= \int_0^t \langle \zeta, S_{t-s} \sigma(s, X_s) - \sigma(s, X_s) \rangle dW_s 
\\ &= \Big\langle \zeta, \int_0^t S_{t-s} \sigma(s, X_s) dW_s \Big\rangle - \int_0^t \langle \zeta, \sigma(s, X_s) \rangle dW_s.
\end{align*}
Therefore, and since $X$ is a mild solution to (\ref{SPDE}), we obtain $\mathbb{P}$--almost surely
\begin{align*}
\langle \zeta, X_t \rangle &= \Big\langle \zeta, S_t h_0 + \int_0^{t} S_{t-s} \alpha(s, X_s) ds
+ \int_0^{t} S_{t-s} \sigma(s, X_s) dW_s \Big\rangle
\\ &= \langle \zeta, S_t h_0 \rangle + \Big\langle \zeta, \int_0^t S_{t-s} \alpha(s, X_s) ds \Big\rangle + \Big\langle \zeta, \int_0^t S_{t-s} \sigma(s, X_s) dW_s \Big\rangle
\\ &= \langle \zeta, h_0 \rangle + \int_0^t \langle A^* \zeta, S_s h_0 \rangle ds
\\ &\quad + \int_0^t \Big\langle A^* \zeta, \int_0^s S_{s-u} \alpha(u, X_u) du \Big\rangle ds + \int_0^t \langle \zeta, \alpha(s, X_s) \rangle ds
\\ &\quad + \int_0^t \Big\langle A^* \zeta, \int_0^s S_{s-u} \sigma(u, X_u) dW_u \Big\rangle ds + \int_0^t \langle \zeta, \sigma(s, X_s) \rangle dW_s,
\end{align*}
and hence
\begin{align*}
&\langle \zeta, X_t \rangle = \langle \zeta, h_0 \rangle 
\\ &\quad + \int_0^t \Big\langle A^* \zeta, \underbrace{S_s h_0 + \int_0^s S_{s-u} \alpha(u, X_u) du + \int_0^s S_{s-u} \sigma(u, X_u) dW_u}_{= X_s} \Big\rangle ds
\\ &\quad + \int_0^t \langle \zeta, \alpha(s, X_s) \rangle ds + \int_0^t \langle \zeta, \sigma(s, X_s) \rangle dW_s
\\ &= \langle \zeta, h_0 \rangle + \int_{0}^{t} \big( \langle A^* \zeta, X_s \rangle + \langle \zeta, \alpha(s, X_s) \rangle \big) ds + \int_{0}^{t} \langle \zeta, \sigma(s, X_s) \rangle dW_s.
\end{align*}
Consequently, the process $X$ is also a weak solution to (\ref{SPDE}).
\end{proof}

Next, we provide conditions which ensure that a mild solution to (\ref{SPDE}) is also a strong solution.

\begin{proposition}
Let $X$ be a mild solution to (\ref{SPDE}) such that $\mathbb{P}$--almost surely we have
\begin{align}\label{cond-dom-1}
&X_s, \alpha(s, X_s) \in \mathcal{D}(A) \text{ and } \sigma(s, X_s) \in L_2^0(\mathcal{D}(A)) \quad \text{for all $s \geq 0$,}
\end{align}
as well as
\begin{align}\label{cond-dom-2}
&\mathbb{P} \bigg( \int_0^t \big( \| X_s \|_{\mathcal{D}(A)} + \| \alpha(s, X_s) \|_{\mathcal{D}(A)} \big) ds < \infty \bigg) = 1 \quad \text{for all $t \geq 0$,}
\\ \label{Fubini-condition-2} &\mathbb{E} \bigg[ \int_0^T \| \sigma(s, X_s) \|_{L_2^0(\mathcal{D}(A))}^2 ds \bigg] < \infty \quad \text{for all $T \geq 0$.}
\end{align}
Then $X$ is also a strong solution to (\ref{SPDE}).
\end{proposition}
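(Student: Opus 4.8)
The plan is to obtain the assertion by chaining the two comparison results already proved: Proposition~\ref{prop-mild-schwach}, by which a mild solution satisfying the integrability condition (\ref{Fubini-condition}) is a weak solution, and the equivalence of Proposition~\ref{prop-strak-schwach-eq}, by which a weak solution satisfying (\ref{X-in-domain}) and (\ref{AX-int}) is a strong solution. Since we are dealing with a genuine (global) solution, both results are applied with lifetime $\tau \equiv \infty$. The proof therefore reduces to deducing the three conditions (\ref{Fubini-condition}), (\ref{X-in-domain}) and (\ref{AX-int}) from the hypotheses (\ref{cond-dom-1})--(\ref{Fubini-condition-2}), and the link in every instance is the comparison between the $H$-norm and the graph norm on $\mathcal{D}(A)$.

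First I would check (\ref{Fubini-condition}). From $\| x \| \leq \| x \|_{\mathcal{D}(A)}$ for all $x \in \mathcal{D}(A)$ and the definition of the Hilbert--Schmidt norm one gets $\| \Phi \|_{L_2^0(H)} \leq \| \Phi \|_{L_2^0(\mathcal{D}(A))}$ for every $\Phi \in L_2^0(\mathcal{D}(A))$, so that assumption (\ref{Fubini-condition-2}) directly yields (\ref{Fubini-condition}). Proposition~\ref{prop-mild-schwach} then shows that $X$ is a weak solution to (\ref{SPDE}).

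It remains to verify (\ref{X-in-domain}) and (\ref{AX-int}). The former is precisely the inclusion $X_s \in \mathcal{D}(A)$ contained in (\ref{cond-dom-1}). For the latter, the bound $\| A x \| \leq \| x \|_{\mathcal{D}(A)}$ gives $\int_0^t \| A X_s \| \, ds \leq \int_0^t \| X_s \|_{\mathcal{D}(A)} \, ds$, which is $\mathbb{P}$-almost surely finite by (\ref{cond-dom-2}); here the separability of $(\mathcal{D}(A), \| \cdot \|_{\mathcal{D}(A)})$ from Lemma~\ref{lemma-domain} ensures that $s \mapsto A X_s$ is a genuine Bochner-integrable process. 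Having established that $X$ is a weak solution satisfying (\ref{X-in-domain}) and (\ref{AX-int}), the implication (2)~$\Rightarrow$~(1) of Proposition~\ref{prop-strak-schwach-eq}, again with $\tau \equiv \infty$, shows that $X$ is a strong solution to (\ref{SPDE}).

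I do not expect a serious obstacle: the entire substance lies in the two elementary norm dominations, after which the two earlier propositions do all the work. It is worth noting that this route uses only the information on $X$ itself together with (\ref{Fubini-condition-2}); the remaining regularity imposed on $\alpha$ in (\ref{cond-dom-1}) and (\ref{cond-dom-2}) would become essential only in a more direct argument that differentiates the mild representation and commutes $A$ with the deterministic and stochastic convolution integrals by means of Lemma~\ref{lemma-hg-rules} and the closedness of $A$.
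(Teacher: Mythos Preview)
Your argument is correct and takes a genuinely different route from the paper. The paper works directly with the mild representation: it rewrites each of the three summands $S_t h_0$, $\int_0^t S_{t-s}\alpha(s,X_s)\,ds$ and $\int_0^t S_{t-s}\sigma(s,X_s)\,dW_s$ as the corresponding ``strong'' summand plus a correction term, using Lemma~\ref{lemma-hg-rules} to express $S_{t-s}x - x$ as $\int_0^{t-s} A S_u x\,du$, and then applies Fubini's theorem for Bochner integrals (for the drift) and for stochastic integrals (for the diffusion, which is where (\ref{Fubini-condition-2}) enters) to recombine the correction terms into $\int_0^t A X_s\,ds$. This is where the full strength of the hypotheses on $\alpha$ and $\sigma$ is used: one needs $\alpha(s,X_s)\in\mathcal{D}(A)$ and $\sigma(s,X_s)\in L_2^0(\mathcal{D}(A))$, together with the corresponding integrability, to make the Fubini manipulations and the commutation of $A$ with the integrals legitimate.

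Your approach bypasses all of this by reducing to the already established implications mild~$\Rightarrow$~weak (Proposition~\ref{prop-mild-schwach}) and weak~$+$~(\ref{X-in-domain}),~(\ref{AX-int})~$\Rightarrow$~strong (Proposition~\ref{prop-strak-schwach-eq}). Your observation in the last paragraph is exactly right: this route needs only $X_s\in\mathcal{D}(A)$, the $\|X_s\|_{\mathcal{D}(A)}$-part of (\ref{cond-dom-2}), and the $L_2^0(H)$-consequence of (\ref{Fubini-condition-2}); the extra regularity on $\alpha$ (and the $L_2^0(\mathcal{D}(A))$-strengthening for $\sigma$) is not used. So you actually prove a slightly stronger statement than the one in the paper. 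The trade-off is that the paper's direct computation is self-contained and does not rely on the earlier propositions, and it illustrates the Fubini/semigroup technique that is itself of interest; your argument is shorter and exposes the redundancy in the hypotheses.
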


\begin{proof}
By hypotheses (\ref{cond-dom-1}) and (\ref{cond-dom-2}) we have (\ref{X-in-domain}) and (\ref{AX-int}). Let $t \geq 0$ be arbitrary. By Lemma~\ref{lemma-hg-rules} we have
\begin{align*}
S_t h_0 - h_0 = \int_0^t A S_s h_0 ds.
\end{align*}
Furthermore, by Lemma~\ref{lemma-hg-rules} and Fubini's theorem for Bochner integrals (see \cite[Section~1.1, page 21]{Atma-book}) we have $\mathbb{P}$--almost surely
\begin{align*}
&\int_0^t \big( S_{t-s} \alpha(s, X_s) - \alpha(s, X_s) \big) ds = \int_0^t \bigg( \int_0^{t-s} A S_u \alpha(s, X_s) du \bigg) ds
\\ &= \int_0^t \bigg( \int_u^{t} A S_{s-u} \alpha(u, X_u) ds \bigg) du = \int_0^t \bigg( \int_0^s A S_{s-u} \alpha(u, X_u) du \bigg) ds
\\ &= \int_0^t A \bigg( \int_0^s S_{s-u} \alpha(u, X_u) du \bigg) ds.
\end{align*}
Due to assumption (\ref{Fubini-condition-2}), we may use Fubini's theorem for stochastic integrals (see \cite[Theorem~2.8]{Atma-book}), which, together with Lemma~\ref{lemma-hg-rules} gives us $\mathbb{P}$--almost surely
\begin{align*}
&\int_0^t \big( S_{t-s} \sigma(s, X_s) - \sigma(s, X_s) \big) dW_s = \int_0^t \bigg( \int_0^{t-s} A S_u \sigma(s, X_s) du \bigg) dW_s
\\ &= \int_0^t \bigg( \int_u^{t} A S_{s-u} \sigma(u, X_u) ds \bigg) dW_u = \int_0^t \bigg( \int_0^s A S_{s-u} \sigma(u, X_u) dW_u \bigg) ds
\\ &= \int_0^t A \bigg( \int_0^s S_{s-u} \sigma(u, X_u) dW_u \bigg) ds. 
\end{align*}
Since $X$ is a mild solution to (\ref{SPDE}), we have $\mathbb{P}$--almost surely
\begin{align*}
X_t &= S_t h_0 + \int_0^{t} S_{t-s} \alpha(s, X_s) ds
+ \int_0^{t} S_{t-s} \sigma(s, X_s) dW_s
\\ &= h_0 + \int_0^t \alpha(s, X_s) ds + \int_0^t \sigma(s, X_s) dW_s 
\\ &\quad + (S_t h_0 - h_0) + \int_0^t (S_{t-s} \alpha(s, X_s) - \alpha(s, X_s)) ds 
\\ &\quad + \int_0^t (S_{t-s} \sigma(s, X_s) - \sigma(s, X_s)) dW_s,
\end{align*}
and hence, combining the latter identities, we obtain $\mathbb{P}$--almost surely
\begin{align*}
X_t &= h_0 + \int_0^t \alpha(s, X_s) ds + \int_0^t \sigma(s, X_s) dW_s 
\\ &\quad + \int_0^t A S_s h_0 ds + \int_0^t A \bigg( \int_0^s S_{s-u} \alpha(u, X_u) du \bigg) ds 
\\ &\quad + \int_0^t A \bigg( \int_0^s S_{s-u} \sigma(u, X_u) dW_u \bigg) ds,
\end{align*}
which implies
\begin{align*}
X_t &= h_0 + \int_0^t \alpha(s, X_s) ds + \int_0^t \sigma(s, X_s) dW_s 
\\ &\quad + \int_0^t A \underbrace{\bigg( S_s h_0 + \int_0^s S_{s-u} \alpha(u, X_u) du + \int_0^s S_{s-u} \sigma(u, X_u) dW_u \bigg)}_{= X_s} ds 
\\ &= h_0 + \int_{0}^{t} \big( A X_s + \alpha(s, X_s) \big) ds + \int_{0}^{t} \sigma(s, X_s) dW_s.
\end{align*}
This proves that $X$ is also a strong solution to (\ref{SPDE}).
\end{proof}

The following result shows that for norm continuous semigroups the concepts of strong, weak and mild solutions are equivalent. In particular, this applies for finite dimensional state spaces. 

\begin{proposition}\label{prop-SPDE-normstetig}
Suppose the semigroup $(S_t)_{t \geq 0}$ is norm continuous. Let $X$ be a stochastic process with $X_0 = h_0$. Then the following statements are equivalent:
\begin{enumerate}
\item The process $X$ is a strong solution to (\ref{SPDE}).

\item The process $X$ is a weak solution to (\ref{SPDE}).

\item The process $X$ is a mild solution to (\ref{SPDE}).
\end{enumerate}
\end{proposition}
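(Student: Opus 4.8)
The plan is to close the cycle of implications $(1)\Rightarrow(2)\Rightarrow(3)\Rightarrow(1)$, exploiting that norm continuity drastically simplifies the structure of the generator. First I would invoke Proposition~\ref{prop-normstetig}: since $(S_t)_{t\geq 0}$ is norm continuous, its generator satisfies $\mathcal{D}(A) = H$ with $A \in L(H)$ bounded, and $S_t = e^{tA}$ for all $t \geq 0$. In particular each $S_t$ is invertible with bounded inverse $S_t^{-1} = e^{-tA}$, so that $(e^{tA})_{t\in\mathbb{R}}$ is a group; I write $S_{-t} := e^{-tA}$ below. The two ``easy'' implications are immediate from the general theory already established: $(1)\Rightarrow(2)$ is Proposition~\ref{prop-stark-schwach} (with $\tau \equiv \infty$), and $(2)\Rightarrow(3)$ is Proposition~\ref{prop-schwach-mild}. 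Neither of these uses norm continuity.

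The real content is $(3)\Rightarrow(1)$, and the main obstacle is that one cannot simply feed a mild solution into Proposition~\ref{prop-mild-schwach} or into the preceding mild-to-strong criterion, because both require the $L^2$-regularity conditions (\ref{Fubini-condition}), respectively (\ref{Fubini-condition-2}), on $\sigma(\cdot,X)$, which are not part of the definition of a solution. I would avoid this issue entirely by a direct It\^{o} argument that uses only almost-sure integrability. Starting from the mild representation and applying the fixed bounded operator $S_{-t}$ (which may be pulled inside both the Bochner and the It\^{o} integral, using $S_{-t}S_{t-s} = S_{-s}$), the process $Y_t := S_{-t}X_t$ is seen to be a genuine It\^{o} process,
\begin{align*}
Y_t = h_0 + \int_0^t S_{-s}\,\alpha(s,X_s)\,ds + \int_0^t S_{-s}\,\sigma(s,X_s)\,dW_s ,
\end{align*}
whose coefficients are almost surely integrable because $\| S_{-s} \| \leq e^{s\|A\|}$ is bounded on compacts and $X$ already satisfies the basic integrability from Definition~\ref{def-solutions}.

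Next I would recover $X$ via $X_t = S_t Y_t = g(t,Y_t)$ with $g(t,y) := e^{tA}y$, and apply It\^{o}'s formula (Theorem~\ref{thm-Ito}). Since $A$ is bounded, $g$ is of class $C^{1,2}$ with $D_t g(t,y) = A e^{tA} y = A S_t y$, $D_y g(t,y) = S_t$ and $D_{yy} g = 0$, so the second-order term vanishes and
\begin{align*}
X_t &= h_0 + \int_0^t \big( A S_s Y_s + S_s S_{-s}\,\alpha(s,X_s)\big)\,ds + \int_0^t S_s S_{-s}\,\sigma(s,X_s)\,dW_s \\
&= h_0 + \int_0^t \big( A X_s + \alpha(s,X_s)\big)\,ds + \int_0^t \sigma(s,X_s)\,dW_s .
\end{align*}
This is precisely the strong solution equation.

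It remains to check the two defining conditions of a strong solution: condition (\ref{X-in-domain}) holds trivially since $\mathcal{D}(A) = H$, and (\ref{AX-int}) holds because $\int_0^t \| A X_s \|\,ds \leq \|A\| \int_0^t \| X_s \|\,ds < \infty$ almost surely by the integrability assumed in Definition~\ref{def-solutions}. Hence $X$ is a strong solution, which closes the cycle and yields the claimed equivalence. I expect the one point requiring care to be the justification that the fixed bounded operator $S_{-t}$ commutes with the stochastic integral and that $g$ meets the $C_b^{1,2,{\rm loc}}$ hypotheses of Theorem~\ref{thm-Ito}; both are routine once $A$ is known to be bounded.
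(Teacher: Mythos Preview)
Your proposal is correct and follows essentially the same route as the paper: both invoke Proposition~\ref{prop-normstetig} to get $A\in L(H)$ and the group $(e^{tA})_{t\in\mathbb{R}}$, factor the mild representation through $e^{-sA}$ to obtain an honest It\^{o} process $Y$, and then apply It\^{o}'s formula (Theorem~\ref{thm-Ito}) to $(t,y)\mapsto e^{tA}y$ to recover the strong equation. The only cosmetic difference is that the paper separates the constant $h_0$ from $Y$ and handles $e^{tA}h_0-h_0$ via Lemma~\ref{lemma-hg-rules}, whereas you absorb $h_0$ into $Y_0$ and apply It\^{o} once; your version is slightly more streamlined but not substantively different.
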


\begin{proof}
(1) $\Rightarrow$ (2): This implication is a consequence of Proposition~\ref{prop-stark-schwach}.

\noindent(2) $\Rightarrow$ (3): This implication is a consequence of Proposition~\ref{prop-schwach-mild}.

\noindent(3) $\Rightarrow$ (1): By Proposition~\ref{prop-normstetig} we have $A \in L(H)$ and $S_t = e^{tA}$, $t \geq 0$. Furthermore, the family $(e^{tA})_{t \in \mathbb{R}}$ is a $C_0$-group on $H$. Therefore, and since $X$ is a mild solution to (\ref{SPDE}), we have $\mathbb{P}$--almost surely
\begin{align*}
X_t &= e^{tA} h_0 + \int_0^{t} e^{(t-s)A} \alpha(s, X_s) ds
+ \int_0^{t} e^{(t-s)A} \sigma(s, X_s) dW_s
\\ &= e^{tA} h_0 + e^{tA} \int_0^{t} e^{-s A} \alpha(s, X_s) ds
+ e^{tA} \int_0^{t} e^{-s A} \sigma(s, X_s) dW_s, \quad t \geq 0.
\end{align*}
Let $Y$ be the It\^{o} process
\begin{align*}
Y_t := \int_0^{t} e^{-s A} \alpha(s, X_s) ds
+ \int_0^{t} e^{-s A} \sigma(s, X_s) dW_s, \quad t \geq 0.
\end{align*}
Then we have $\mathbb{P}$--almost surely
\begin{align*}
X_t = e^{tA} (h_0 + Y_t), \quad t \geq 0,
\end{align*}
and, by Lemma~\ref{lemma-hg-rules}, we have
\begin{align*}
e^{t A} h_0 - h_0 = \int_0^t A e^{sA} h_0 \, ds.
\end{align*}
Defining the function
\begin{align*}
f : \mathbb{R}_+ \times H \rightarrow H, \quad f(s, y) := e^{sA} y,
\end{align*}
by Lemma~\ref{lemma-hg-rules} we have $f \in C_b^{1,2,{\rm loc}}(\mathbb{R}_+ \times H; H)$ with partial derivatives
\begin{align*}
D_s f(s, y) &= A e^{sA} y,
\\ D_y f(s, y) &= e^{sA},
\\ D_{yy} f(s, y) &= 0.
\end{align*}
By It\^{o}'s formula (Theorem~\ref{thm-Ito}) we get $\mathbb{P}$--almost surely
\begin{align*}
e^{tA} Y_t = f(t, Y_t) &= f(0, 0) + \int_0^t \big( D_s f(s, Y_s) + D_y f(s, Y_s) e^{-s A} \alpha(s, X_s) \big) ds 
\\ &\quad + \int_0^t D_y f(s, Y_s) e^{-s A} \sigma(s, X_s) dW_s
\\ &= \int_0^t \big( A e^{sA} Y_s + \alpha(s, X_s) \big) ds + \int_0^t \sigma(s, X_s) dW_s.
\end{align*}
Combining the previous identities, we obtain $\mathbb{P}$--almost surely
\begin{align*}
X_t &= e^{tA} (h_0 + Y_t) = h_0 + (e^{tA} h_0 - h_0) + e^{tA} Y_t
\\ &= h_0 + \int_0^t A e^{sA} h_0 \, ds + \int_0^t \big( A e^{sA} Y_s + \alpha(s, X_s) \big) ds + \int_0^t \sigma(s, X_s) dW_s
\\ &= h_0 + \int_0^t \big( A \underbrace{e^{sA} (h_0 + Y_s)}_{= X_s} + \alpha(s, X_s) \big) ds + \int_0^t \sigma(s, X_s) dW_s
\\ &= h_0 + \int_{0}^{t} \big( A X_s + \alpha(s, X_s) \big) ds + \int_{0}^{t} \sigma(s, X_s) dW_s, \quad t \geq 0,
\end{align*}
proving that $X$ is a strong solution to (\ref{SPDE}).
\end{proof}

\section{Stochastic convolution integrals}\label{sec-convolution}

In this section, we deal with the regularity of stochastic convolution integrals, which occur when dealing with mild solutions to SPDEs of the type (\ref{SPDE}). 

Let $E$ be a separable Banach space and let $(S_t)_{t \geq 0}$ be a $C_0$-semigroup on $E$. We start with the drift term.

\begin{lemma}\label{lemma-conv-stetig}
Let $f : \mathbb{R}_+ \rightarrow E$ be a measurable mapping such that
\begin{align*}
\int_0^t \| f(s) \| ds < \infty \quad \text{for all $t \geq 0$.}
\end{align*}
Then the mapping
\begin{align*}
F : \mathbb{R}_+ \rightarrow E, \quad F(t) := \int_0^t S_{t-s} f(s) ds
\end{align*}
is continuous.
\end{lemma}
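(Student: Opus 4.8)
The plan is to prove continuity of $F$ by splitting the difference $F(t+h) - F(t)$ into two pieces: a part that captures the change in the upper limit of integration, and a part that captures the change in the semigroup argument. Fix $t \geq 0$ and consider $h > 0$ (the case $h < 0$ is analogous). I would write
\begin{align*}
F(t+h) - F(t) = \int_t^{t+h} S_{t+h-s} f(s) \, ds + \int_0^t \big( S_{t+h-s} - S_{t-s} \big) f(s) \, ds.
\end{align*}
The first integral should be small because it is taken over an interval of length $h$; the second should be small because of the strong continuity of the semigroup, applied to the integrand pointwise, together with a dominated convergence argument.

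For the first term, I would use the growth estimate from Lemma~\ref{lemma-wachstum}, namely $\| S_r \| \leq M e^{\omega r}$ for $r \in [0, h]$, to bound $\| S_{t+h-s} f(s) \| \leq M e^{\omega(t+h-s)} \| f(s) \|$. Since $r = t+h-s$ ranges over $[t, t+h]$, the operator norm is bounded by a constant $C_t$ depending only on $t$ (and on $M$, $\omega$, and an upper bound for $h$). Thus the first integral is dominated by $C_t \int_t^{t+h} \| f(s) \| \, ds$, which tends to $0$ as $h \to 0$ by absolute continuity of the integral $s \mapsto \int_0^s \| f(u) \| \, du$.

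For the second term, the key observation is that for each fixed $s \in [0, t)$, the map $h \mapsto S_{t+h-s} f(s)$ is continuous by the strong continuity of $(S_t)_{t \geq 0}$, so the integrand $\| (S_{t+h-s} - S_{t-s}) f(s) \|$ converges to $0$ pointwise as $h \to 0$. To pass the limit under the integral sign, I would produce an integrable dominating function: using Lemma~\ref{lemma-wachstum} again, $\| (S_{t+h-s} - S_{t-s}) f(s) \| \leq 2 M e^{\omega(t+|h|-s)} \| f(s) \| \leq C_t' \| f(s) \|$ on the relevant range of $h$, and $s \mapsto \| f(s) \|$ is integrable on $[0, t]$ by hypothesis. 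Dominated convergence then gives that the second integral tends to $0$.

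The main obstacle is the justification for the second term: one must be careful that the pointwise convergence of the integrand and the domination are both uniform enough in the parameter $h$ over a small neighborhood of $0$. The cleanest way to handle this is to phrase the argument via sequences $h_n \to 0$ and invoke the dominated convergence theorem for Bochner integrals, rather than attempting a direct $\varepsilon$--$\delta$ estimate that would require uniform continuity of $r \mapsto S_r f(s)$ in $s$ (which need not hold). Combining the two estimates yields $\| F(t+h) - F(t) \| \to 0$ as $h \to 0$, establishing continuity of $F$.
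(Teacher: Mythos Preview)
Your proposal is correct and follows essentially the same approach as the paper: the paper also splits $F(t_n)-F(t)$ into an integral over the common interval (handled via pointwise strong continuity of the semigroup plus the growth estimate and Lebesgue's dominated convergence theorem) and a tail integral over the short interval (handled by the growth estimate and local integrability of $\|f\|$), treating the right- and left-continuous cases separately via sequences $t_n \downarrow t$ and $t_n \uparrow t$. Your parametrization by $h$ and your explicit remark about passing to sequences for the DCT step amount to the same argument.
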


\begin{proof}
Let $t \in \mathbb{R}_+$ be arbitrary. It suffices to prove that $F$ is right-continuous and left-continuous in $t$. 
\begin{enumerate}
\item Let $(t_n)_{n \in \mathbb{N}} \subset \mathbb{R}_+$ be a sequence such that $t_n \downarrow t$. Then for every $n \in \mathbb{N}$ we have
\begin{align*}
\| F(t) - F(t_n) \| &= \bigg\| \int_0^t S_{t-s} f(s) ds - \int_0^{t_n} S_{t_n - s} f(s) ds \bigg\|
\\ &= \bigg\| \int_0^t S_{t-s} f(s) ds - \int_0^{t} S_{t_n - s} f(s) ds - \int_t^{t_n} S_{t_n - s} f(s) ds \bigg\|
\\ &\leq \int_0^t \| S_{t-s} f(s) - S_{t_n - s} f(s) \| ds + \int_t^{t_n} \| S_{t_n -s} f(s) \| ds.
\end{align*}
By Lemma~\ref{lemma-C0-stetig} the mapping
\begin{align*}
\mathbb{R}_+ \times E \rightarrow E, \quad (u, x) \mapsto S_u x
\end{align*}
is continuous. Thus, taking into account estimate (\ref{Halbgruppe-Wachstum}) from Lemma~\ref{lemma-wachstum}, by Lebesgue's dominated convergence theorem we obtain
\begin{align*}
\| F(t) - F(t_n) \| \rightarrow 0 \quad \text{for $n \rightarrow \infty$.}
\end{align*}

\item Let $(t_n)_{n \in \mathbb{N}} \subset \mathbb{R}_+$ be a sequence such that $t_n \uparrow t$. Then for every $n \in \mathbb{N}$ we have
\begin{align*}
\| F(t) - F(t_n) \| &= \bigg\| \int_0^t S_{t-s} f(s) ds - \int_0^{t_n} S_{t_n - s} f(s) ds \bigg\|
\\ &= \bigg\| \int_0^{t_n} S_{t-s} f(s) ds - \int_{t_n}^{t} S_{t - s} f(s) ds - \int_0^{t_n} S_{t_n - s} f(s) ds \bigg\|
\\ &\leq \int_0^{t_n} \| S_{t-s} f(s) - S_{t_n - s} f(s) \| ds + \int_{t_n}^t \| S_{t -s} f(s) \| ds.
\end{align*}
Proceeding as in the previous situation, by Lebesgue's dominated convergence theorem we obtain
\begin{align*}
\| F(t) - F(t_n) \| \rightarrow 0 \quad \text{for $n \rightarrow \infty$.}
\end{align*}
\end{enumerate}
This completes the proof.
\end{proof}

\begin{proposition}\label{prop-conv-dt-stetig}
Let $X$ be a progressively measurable process satisfying
\begin{align*}
\mathbb{P} \bigg( \int_0^t \| X_s \| ds < \infty \bigg) = 1 \quad \text{for all $t \geq 0$.}
\end{align*}
Then the process $Y$ defined as
\begin{align*}
Y_t := \int_0^t S_{t-s} X_s ds, \quad t \geq 0
\end{align*}
is continuous and adapted.
\end{proposition}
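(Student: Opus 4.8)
The plan is to prove the two assertions—continuity of the paths and adaptedness—by separate arguments: the continuity will follow from the deterministic result Lemma~\ref{lemma-conv-stetig} applied pathwise, while the adaptedness will follow from the progressive measurability of $X$ combined with the joint continuity of the semigroup recorded in Lemma~\ref{lemma-C0-stetig}.

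For the continuity, first I would fix a measurable set $\Omega_0 \in \mathcal{F}$ with $\mathbb{P}(\Omega_0) = 1$ on which $\int_0^t \| X_s(\omega) \| \, ds < \infty$ for every $t \geq 0$; such a set exists by taking the hypothesis for rational $t$ and using monotonicity of the integral in $t$. For each fixed $\omega \in \Omega_0$, the sample path $s \mapsto X_s(\omega)$ is a measurable mapping $\mathbb{R}_+ \to H$ (progressive measurability implies joint measurability, hence measurability of each path) and is locally Bochner integrable. Applying Lemma~\ref{lemma-conv-stetig} with $E = H$ and $f := X_\bullet(\omega)$ then gives that $t \mapsto Y_t(\omega) = \int_0^t S_{t-s} X_s(\omega) \, ds$ is continuous. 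Hence $Y$ has continuous paths on $\Omega_0$, that is, $\mathbb{P}$--almost surely, which is all that is claimed.

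For the adaptedness, I would fix $t \geq 0$. By progressive measurability, the restriction of $X$ to $[0,t] \times \Omega$ is $\mathcal{B}([0,t]) \otimes \mathcal{F}_t$--measurable. By Lemma~\ref{lemma-C0-stetig} the mapping $[0,t] \times H \to H$, $(s,x) \mapsto S_{t-s} x$ is continuous, hence Borel measurable. Composing these, the mapping $(s,\omega) \mapsto S_{t-s} X_s(\omega)$ is $\mathcal{B}([0,t]) \otimes \mathcal{F}_t$--measurable, and integrating out the time variable shows that the Bochner integral $Y_t = \int_0^t S_{t-s} X_s \, ds$ is $\mathcal{F}_t$--measurable. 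As $t$ was arbitrary, $Y$ is adapted.

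The point requiring the most care is the adaptedness step: one must verify that the integrand is jointly measurable in $(s,\omega)$ with respect to the correct product $\sigma$--algebra $\mathcal{B}([0,t]) \otimes \mathcal{F}_t$ (so that the $\sigma$--algebra is $\mathcal{F}_t$ rather than the larger $\mathcal{F}$), which is exactly why the hypothesis is phrased in terms of \emph{progressive} measurability and why the joint continuity of the semigroup in Lemma~\ref{lemma-C0-stetig} is invoked rather than mere strong continuity in $t$. The continuity step, by contrast, is a routine pathwise transfer of Lemma~\ref{lemma-conv-stetig} and presents no real difficulty beyond isolating the full-probability set $\Omega_0$.
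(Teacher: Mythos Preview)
Your proposal is correct and follows the same approach as the paper: continuity via a pathwise application of Lemma~\ref{lemma-conv-stetig}, and adaptedness from the progressive measurability of $X$. The paper's own proof is extremely terse (two sentences), so you have simply spelled out the details the author leaves implicit, including the explicit invocation of Lemma~\ref{lemma-C0-stetig} to justify measurability of the integrand.
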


\begin{proof}
The continuity of $Y$ is a consequence of Lemma~\ref{lemma-conv-stetig}. Moreover, $Y$ is adapted, because $X$ is progressively measurable.
\end{proof}

Now, we shall deal with stochastic convolution integrals driven by Wiener processes. Let $H$ be a separable Hilbert space and let $(S_t)_{t \geq 0}$ be a $C_0$-semigroup on $H$. Moreover, let $W$ be a trace class Wiener process on some separable Hilbert space~$\mathbb{H}$.

\begin{definition}
Let $X$ be a $L_2^0(H)$-valued predictable process such that
\begin{align*}
\mathbb{P} \bigg( \int_0^t \| X_s \|_{L_2^0(H)}^{2} ds < \infty \bigg) = 1 \quad \text{for all $t \geq 0$.}
\end{align*}
We define the \emph{stochastic convolution} $X \star W$ as
\begin{align*}
(X \star W)_t := \int_0^t S_{t-s} X_s dW_s, \quad t \geq 0.
\end{align*}
\end{definition}

We recall the following result concerning the regularity of stochastic convolutions.

\begin{proposition}\label{prop-conv-dW-stetig}
Let $X$ be a $L_2^0(H)$-valued predictable process such that one of the following two conditions is satisfied:
\begin{enumerate}
\item There exists a constant $p > 1$ such that
\begin{align*}
\mathbb{E} \bigg[ \int_0^t \| X_s \|_{L_2^0(H)}^{2p} ds \bigg] < \infty \quad \text{for all $t \geq 0$.}
\end{align*}
\item The semigroup $(S_t)_{t \geq 0}$ is a semigroup of pseudo-contractions, and we have
\begin{align*}
\mathbb{E} \bigg[ \int_0^t \| X_s \|_{L_2^0(H)}^{2} ds \bigg] < \infty \quad \text{for all $t \geq 0$.}
\end{align*}
\end{enumerate}
Then the stochastic convolution $X \star W$ has a continuous version. 
\end{proposition}

\begin{proof}
See \cite[Lemma~3.3]{Atma-book}.
\end{proof}

\section{Existence- and uniqueness results for SPDEs}\label{sec-existence}

In this section, we will present results concerning existence and uniqueness of solutions to the SPDE (\ref{SPDE}).

First, we recall the Banach fixed point theorem, which will be a basic result for proving the existence of mild solutions to (\ref{SPDE}).

\begin{definition}
Let $(E, d)$ be a metric space and let $\Phi : E \rightarrow E$ be a mapping.
\begin{enumerate}
\item The mapping $\Phi$ is called a \emph{contraction}, if for some constant $0 \leq L < 1$ we have
\begin{align*}
d(\Phi(x), \Phi(y)) \leq L \cdot d(x, y) \quad \text{for all $x, y \in E$.}
\end{align*}
\item An element $x \in E$ is called a \emph{fixed point} of $\Phi$, if we have
\begin{align*}
\Phi(x) = x.
\end{align*}
\end{enumerate}
\end{definition}

The following result is the well-known Banach fixed point theorem. Its proof can be found, e.g., in \cite[Theorem~3.48]{Aliprantis}. 

\begin{theorem}[Banach fixed point theorem]\label{thm-fixpunkt-Banach}
Let $E$ be a complete metric space and let $\Phi : E \rightarrow E$ be a contraction. Then the mapping $\Phi$ has a unique fixed point.
\end{theorem}

In this text, we shall use the following slight extension of the Banach fixed point theorem:

\begin{corollary}\label{cor-fixpunkt-n}
Let $E$ be a complete metric space and let $\Phi : E \rightarrow E$ be a mapping such that for some $n \in \mathbb{N}$ the mapping $\Phi^n$ is a contraction. Then the mapping $\Phi$ has a unique fixed point.
\end{corollary}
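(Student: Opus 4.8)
The plan is to leverage the standard Banach fixed point theorem (Theorem~\ref{thm-fixpunkt-Banach}) applied to the iterate $\Phi^n$, and then to promote the resulting fixed point of $\Phi^n$ to a fixed point of $\Phi$ itself. First I would invoke Theorem~\ref{thm-fixpunkt-Banach} directly: since $E$ is a complete metric space and $\Phi^n : E \to E$ is by hypothesis a contraction, there exists a unique $x \in E$ with $\Phi^n(x) = x$.

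The main point is then to show that this $x$ is in fact a fixed point of $\Phi$. The key trick is to apply $\Phi$ to both sides of the identity $\Phi^n(x) = x$. This yields $\Phi^{n+1}(x) = \Phi(x)$, which can be rewritten as $\Phi^n(\Phi(x)) = \Phi(x)$, using the commutativity $\Phi^{n+1} = \Phi \circ \Phi^n = \Phi^n \circ \Phi$. Hence $\Phi(x)$ is also a fixed point of $\Phi^n$. By the \emph{uniqueness} part of Theorem~\ref{thm-fixpunkt-Banach}, the fixed point of the contraction $\Phi^n$ is unique, so we must have $\Phi(x) = x$. This establishes that $x$ is a fixed point of $\Phi$.

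It remains to check uniqueness of the fixed point of $\Phi$. This is immediate: any fixed point $y$ of $\Phi$, i.e.\ any $y$ with $\Phi(y) = y$, satisfies $\Phi^n(y) = y$ by iterating, so $y$ is a fixed point of $\Phi^n$; by the uniqueness just used, $y = x$. Thus $\Phi$ has exactly one fixed point.

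I do not anticipate a serious obstacle here, since the argument is elementary once the right observation is made. The one conceptual step worth flagging is the passage from ``$\Phi(x)$ is a fixed point of $\Phi^n$'' to ``$\Phi(x) = x$'': this relies crucially on the uniqueness clause in the Banach fixed point theorem, not merely on existence. Everything else is routine manipulation of the iterates of $\Phi$.
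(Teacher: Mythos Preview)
Your proposal is correct and follows essentially the same route as the paper's own proof: apply Theorem~\ref{thm-fixpunkt-Banach} to the contraction $\Phi^n$, observe that $\Phi(x)$ is again a fixed point of $\Phi^n$ via $\Phi^n(\Phi(x)) = \Phi(\Phi^n(x)) = \Phi(x)$, invoke uniqueness to conclude $\Phi(x) = x$, and then note that any fixed point of $\Phi$ is a fixed point of $\Phi^n$, giving uniqueness.
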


\begin{proof}
According to the Banach fixed point theorem (Theorem~\ref{thm-fixpunkt-Banach}) the mapping $\Phi^n$ has a unique fixed point, that is, there exists a unique element $x \in E$ such that $\Phi^n(x) = x$. Therefore, we have
\begin{align*}
\Phi(x) = \Phi(\Phi^n(x)) = \Phi^n(\Phi(x)),
\end{align*}
showing that $\Phi(x)$ is a fixed point of $\Phi^n$. Since $\Phi^n$ has a unique fixed point, we deduce that $\Phi(x) = x$, showing that $x$ is a fixed point of $\Phi$.

In order to prove uniqueness, let $y \in E$ be another fixed point of $\Phi$, that is, we have $\Phi(y) = y$. By induction, we obtain
\begin{align*}
\Phi^n(y) = \Phi^{n-1}(\Phi(y)) = \Phi^{n-1}(y) = \ldots = \Phi(y) = y,
\end{align*}
showing that $y$ is a fixed point of $\Phi^n$. Since the mapping $\Phi^n$ has exactly one fixed point, we obtain $x = y$.
\end{proof}

An indispensable tool for proving uniqueness of mild solutions to (\ref{SPDE}) will be the following version of Gronwall's inequality, see, e.g., \cite[Theorem~5.1]{EK}.

\begin{lemma}[Gronwall's inequality]\label{lemma-Gronwall}
Let $T \geq 0$ be fixed, let $f : [0, T] \rightarrow \mathbb{R}_+$ be a nonnegative continuous mapping, and let $\beta \geq 0$ be a constant such that
\begin{align*}
f(t) \leq \beta \int_0^t f(s) ds \quad \text{for all $t \in
[0, T]$.}
\end{align*}
Then we have $f \equiv 0$.
\end{lemma}

The following result shows that local Lipschitz continuity of $\alpha$ and $\sigma$ ensures the uniqueness of mild solutions to the SPDE (\ref{SPDE}). 

\begin{theorem}\label{thm-starke-Eind}
We suppose that for every $n \in \mathbb{N}$ there exists a constant $L_n \geq 0$ such that
\begin{align}\label{loc-Lipschitz-alpha}
\| \alpha(t, h_1) - \alpha(t, h_2) \| &\leq L_n \| h_1 - h_2 \|,
\\ \label{loc-Lipschitz-sigma} \| \sigma(t, h_1) - \sigma(t, h_2) \|_{L_2^0(H)} &\leq L_n \| h_1 - h_2 \|
\end{align}
for all $t \geq 0$ and all $h_1, h_2 \in H$ with $\| h_1 \|, \| h_2 \| \leq n$. Let $h_0, g_0 : \Omega \rightarrow H$ be two $\mathcal{F}_0$-measurable random variables, let $\tau > 0$ be a strictly positive stopping time, and let $X, Y$ be two local mild solutions to (\ref{SPDE}) with initial conditions $h_0$, $g_0$ and lifetime $\tau$. Then we have up to indistinguishability\footnote[1]{Two processes $X$ and $Y$ are called indistinguishable if the set $\{ \omega \in \Omega : X_t(\omega) \neq Y_t(\omega) \text{ for some } t \in \mathbb{R}_+ \}$ is a $\mathbb{P}$--nullset.}
\begin{align*}
X^{\tau} \mathbbm{1}_{\{h_0 = g_0\}} = Y^{\tau} \mathbbm{1}_{\{h_0 = g_0\}}.
\end{align*}
\end{theorem}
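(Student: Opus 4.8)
The plan is to prove a pathwise stability estimate for mild solutions and then apply Gronwall's inequality (Lemma~\ref{lemma-Gronwall}) to conclude that the two solutions agree on the event $\{h_0 = g_0\}$. Because the Lipschitz constants in (\ref{loc-Lipschitz-alpha}) and (\ref{loc-Lipschitz-sigma}) are only \emph{local}, the first step is a localization: for each $n \in \mathbb{N}$ I would introduce the stopping time
\begin{align*}
\tau_n := \tau \wedge \inf \{ t \geq 0 : \| X_t \| \geq n \text{ or } \| Y_t \| \geq n \},
\end{align*}
so that on the stochastic interval $[0, \tau_n]$ both processes are bounded by $n$ and the global Lipschitz estimates with constant $L_n$ apply to $\alpha(s, X_s) - \alpha(s, Y_s)$ and $\sigma(s, X_s) - \sigma(s, Y_s)$. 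Since $X, Y$ are continuous, $\tau_n \uparrow \tau$ as $n \to \infty$, so it suffices to establish the claim on each $[0, \tau_n]$ and then let $n \to \infty$.

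The central computation is an $L^2$-estimate for the difference $X - Y$ on $\{h_0 = g_0\}$. Using the mild-solution representations, on the event $\{h_0 = g_0\}$ the initial terms $S_{t \wedge \tau} h_0$ and $S_{t \wedge \tau} g_0$ cancel, so that
\begin{align*}
X_{t \wedge \tau_n} - Y_{t \wedge \tau_n} &= \int_0^{t \wedge \tau_n} S_{(t \wedge \tau_n) - s} \big( \alpha(s, X_s) - \alpha(s, Y_s) \big) ds
\\ &\quad + \int_0^{t \wedge \tau_n} S_{(t \wedge \tau_n) - s} \big( \sigma(s, X_s) - \sigma(s, Y_s) \big) dW_s
\end{align*}
holds on that event. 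I would multiply by the indicator $\mathbbm{1}_{\{h_0 = g_0\}}$ (which is $\mathcal{F}_0$-measurable, hence passes inside the integrals), take squared norms, split into drift and diffusion parts via the elementary inequality $\|a+b\|^2 \leq 2\|a\|^2 + 2\|b\|^2$, and take expectations. For the drift term I would use the growth bound (\ref{Halbgruppe-Wachstum}) on $\|S_u\|$, the Cauchy--Schwarz inequality in $s$, and the Lipschitz bound $L_n$; for the diffusion term I would apply the It\^{o} isometry (\ref{Ito-isom}) together with (\ref{Halbgruppe-Wachstum}) and (\ref{loc-Lipschitz-sigma}). Writing $\varphi(t) := \mathbb{E}[ \| X_{t \wedge \tau_n} - Y_{t \wedge \tau_n} \|^2 \mathbbm{1}_{\{h_0 = g_0\}} ]$, these estimates yield a bound of the form $\varphi(t) \leq C \int_0^t \varphi(s) \, ds$ for a constant $C$ depending on $n$, $T$, $M$, $\omega$ and $L_n$.

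To invoke Gronwall one needs $\varphi$ finite and continuous; finiteness follows from the boundedness of $X, Y$ by $n$ on $[0, \tau_n]$ (which bounds the increments and, via the Lipschitz estimates and the integrability built into Definition~\ref{def-solutions}, the two stochastic-convolution terms), and continuity follows from continuity of the sample paths together with dominated convergence. Applying Lemma~\ref{lemma-Gronwall} on each $[0, T]$ then gives $\varphi \equiv 0$, so $X_{t \wedge \tau_n} = Y_{t \wedge \tau_n}$ almost surely on $\{h_0 = g_0\}$ for every fixed $t$; path continuity upgrades this to indistinguishability on $[0, \tau_n]$, and letting $n \to \infty$ yields $X^\tau \mathbbm{1}_{\{h_0 = g_0\}} = Y^\tau \mathbbm{1}_{\{h_0 = g_0\}}$ up to indistinguishability. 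The main obstacle I anticipate is the technical bookkeeping around the random time horizon: one must ensure that stopping the convolution integrals at $\tau_n$ is compatible with the It\^{o} isometry (which is naturally stated over a deterministic interval $[0,T]$), and that the $\mathcal{F}_0$-indicator can be carried through the stochastic integral cleanly. A convenient way to handle this is to fix a deterministic $T$, work with $t \in [0,T]$, replace the integrands by the predictable processes stopped at $\tau_n$ (extended by zero past $\tau_n$), and apply the isometry to these, so that all estimates reduce to bona fide inequalities on the deterministic interval $[0,T]$.
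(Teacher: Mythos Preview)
Your proposal is correct and follows essentially the same approach as the paper: localize via the stopping times $\tau_n$, derive an $L^2$-Gronwall inequality for $f(t) = \mathbb{E}[\mathbbm{1}_{\{h_0 = g_0\}}\|X_{t\wedge\tau_n} - Y_{t\wedge\tau_n}\|^2]$ using the mild-solution representation, the semigroup growth bound, Cauchy--Schwarz, the It\^{o} isometry, and the local Lipschitz constants $L_n$, then pass to indistinguishability via path continuity and let $n\to\infty$. The only cosmetic difference is that the paper keeps the vanishing initial term and uses the three-term splitting with constant $3$ rather than your two-term splitting with constant $2$.
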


\begin{proof}
Defining the stopping times $(\tau_n)_{n \in \mathbb{N}}$ as
\begin{align*}
\tau_n := \tau \wedge \inf \{ t \geq 0 : \| X_t \| \geq n \} \wedge \inf \{ t \geq 0 : \| Y_t \| \geq n \},
\end{align*}
we have $\mathbb{P}(\tau_n \rightarrow \tau) = 1$. Let $n \in \mathbb{N}$ and $T \geq 0$ be arbitrary, and set 
\begin{align*}
\Gamma := \{ h_0 = g_0 \} \in \mathcal{F}_0. 
\end{align*}
The mapping
\begin{align*}
f : [0, T] \rightarrow \mathbb{R}, \quad f(t) := \mathbb{E} \big[ \mathbbm{1}_{\Gamma} \| X_{t \wedge \tau_n} - Y_{t \wedge \tau_n} \|^2 \big]
\end{align*}
is nonnegative, and it is continuous by Lebesgue's dominated convergence theorem. For all $t \in [0, T]$ we have
\begin{align*}
f(t) &= \mathbb{E} \big[ \mathbbm{1}_{\Gamma} \| X_{t \wedge \tau_n} - Y_{t \wedge \tau_n} \|^2 \big] \leq 3 \underbrace{\mathbb{E} \big[ \mathbbm{1}_{\Gamma} \| S_{t \wedge \tau_n} (h_0 - g_0) \|^2 \big]}_{=0} 
\\ &\quad + 3 \mathbb{E} \Bigg[ \mathbbm{1}_{\Gamma} \bigg\| \int_0^{t \wedge \tau_n} S_{(t \wedge \tau_n)-s} \big( \alpha(s, X_s) - \alpha(s, Y_s) \big) ds \bigg\|^2 \Bigg] 
\\ &\quad + 3 \mathbb{E} \Bigg[ \mathbbm{1}_{\Gamma} \bigg\| \int_0^{t \wedge \tau_n} S_{(t \wedge \tau_n)-s} \big( \sigma(s, X_s) - \sigma(s, Y_s) \big) dW_s \bigg\|^2 \Bigg]
\\ &= 3 \mathbb{E} \Bigg[ \bigg\| \int_0^{t \wedge \tau_n} \mathbbm{1}_{\Gamma} S_{(t \wedge \tau_n)-s} \big( \alpha(s, X_s) - \alpha(s, Y_s) \big) ds \bigg\|^2 \Bigg] 
\\ &\quad + 3 \mathbb{E} \Bigg[ \bigg\| \int_0^{t \wedge \tau_n} \mathbbm{1}_{\Gamma} S_{(t \wedge \tau_n)-s} \big( \sigma(s, X_s) - \sigma(s, Y_s) \big) dW_s \bigg\|^2 \Bigg],
\end{align*}
and hence, by the Cauchy-Schwarz inequality, the It\^{o} isometry (\ref{Ito-isom}), the growth estimate (\ref{Halbgruppe-Wachstum}) from Lemma~\ref{lemma-wachstum} and the local Lipschitz conditions (\ref{loc-Lipschitz-alpha}), (\ref{loc-Lipschitz-sigma}) we obtain
\begin{align*}
f(t) &\leq 3T \mathbb{E} \bigg[ \int_0^{t \wedge \tau_n} \big\| \mathbbm{1}_{\Gamma} S_{(t \wedge \tau_n)-s} \big( \alpha(s, X_s) - \alpha(s, Y_s) \big) \big\|^2 ds \bigg]
\\ &\quad + 3 \mathbb{E} \bigg[ \int_0^{t \wedge \tau_n} \big\| \mathbbm{1}_{\Gamma} S_{(t \wedge \tau_n)-s} \big( \sigma(s, X_s) - \sigma(s, Y_s) \big) \big\|_{L_2^0(H)}^2 ds \bigg]
\\ &\leq 3T \big( M e^{\omega T} \big)^2 \mathbb{E} \bigg[ \int_0^{t \wedge \tau_n} \mathbbm{1}_{\Gamma} \| \alpha(s, X_s) - \alpha(s, Y_s) \|^2 ds \bigg] 
\\ &\quad + 3 \big( M e^{\omega T} \big)^2 \mathbb{E} \bigg[ \int_0^{t \wedge \tau_n} \mathbbm{1}_{\Gamma} \| \sigma(s, X_s) - \sigma(s, Y_s) \|_{L_2^0(H)}^2 ds \bigg]
\\ &\leq 3(T+1) \big( M e^{\omega T} \big)^2 L_n^2 \int_0^t \mathbb{E} \big[ \mathbbm{1}_{\Gamma} \| X_{s \wedge \tau_n} - Y_{s \wedge \tau_n} \|^2 \big] ds 
\\ &= 3(T+1) \big( M e^{\omega T} \big)^2 L_n^2 \int_0^t f(s) ds.
\end{align*}
Using Gronwall's inequality (see Lemma~\ref{lemma-Gronwall}) we deduce that $f \equiv 0$. Thus, by the continuity of the sample paths of $X$ and $Y$, we obtain
\begin{align*}
\mathbb{P} \bigg( \bigcap_{t \geq 0} \{ X_{t \wedge \tau_n} \mathbbm{1}_{\Gamma} = Y_{t \wedge \tau_n} \mathbbm{1}_{\Gamma} \} \bigg) = 1 \quad \text{for all $n \in \mathbb{N}$,}
\end{align*}
and hence, by the continuity of the probability measure $\mathbb{P}$, we conclude that
\begin{align*}
\mathbb{P} \bigg( \bigcap_{t \geq 0} \{ X_{t \wedge \tau} \mathbbm{1}_{\Gamma} = Y_{t \wedge \tau} \mathbbm{1}_{\Gamma} \} \bigg) &= \mathbb{P} \bigg( \bigcap_{n \in \mathbb{N}} \bigcap_{t \geq 0} \{ X_{t \wedge \tau_n} \mathbbm{1}_{\Gamma} = Y_{t \wedge \tau_n} \mathbbm{1}_{\Gamma} \} \bigg)
\\ &= \lim_{n \rightarrow \infty} \mathbb{P} \bigg( \bigcap_{t \geq 0} \{ X_{t \wedge \tau_n} \mathbbm{1}_{\Gamma} = Y_{t \wedge \tau_n} \mathbbm{1}_{\Gamma} \} \bigg) = 1,
\end{align*}
which completes the proof.
\end{proof}

The local Lipschitz conditions (\ref{loc-Lipschitz-alpha}), (\ref{loc-Lipschitz-sigma}) are, in general, not sufficient in order to ensure the existence of mild solutions to the SPDE (\ref{SPDE}). Now, we will prove that the existence of mild solutions follows from global Lipschitz and linear growth conditions on $\alpha$ and $\sigma$. For this, we recall an auxiliary result which extends the It\^{o} isometry (\ref{Ito-isom}).

\begin{lemma}\label{lemma-est-Ito-2p}
Let $T \geq 0$ be arbitrary and let $X = (X_t)_{t \in [0, T]}$ be a $L_2^0(H)$-valued, predictable process such that
\begin{align*}
\mathbb{E} \bigg[ \int_0^T \| X_s \|_{L_2^0(H)}^2 ds \bigg] < \infty.
\end{align*}
Then, for every $p \geq 1$ we have
\begin{align*}
\mathbb{E} \Bigg[ \bigg\| \int_0^T X_s dW_s \bigg\|^{2p} \Bigg] \leq C_p \mathbb{E} \bigg[ \int_0^T \| X_s \|_{L_2^0(H)}^2 ds \bigg]^{p},
\end{align*}
where the constant $C_p > 0$ is given by
\begin{align*}
C_p = \big( p(2p-1) \big)^p \bigg( \frac{2p}{2p - 1} \bigg)^{2p^2}.
\end{align*}
\end{lemma}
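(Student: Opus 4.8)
The statement is a moment inequality for stochastic integrals generalizing the It\^{o} isometry (\ref{Ito-isom}), which is the case $p=1$. The plan is to obtain the $2p$-th moment bound by applying It\^{o}'s formula (Theorem~\ref{thm-Ito}) to the function $y \mapsto \|y\|^{2p}$ applied to the $H$-valued martingale $M_t := \int_0^t X_s \, dW_s$, and then to estimate the resulting drift and martingale terms. First I would verify, via a standard localization argument, that it suffices to treat the case where the integrand $X$ is bounded, so that $M$ is a genuine (square-integrable) martingale and all the moments appearing below are finite; the general case then follows by Fatou's lemma after letting the localizing stopping times increase to $T$. Throughout one works with the series representation (\ref{series-integral}) so that the quadratic variation of $M$ is controlled by $\int_0^t \|X_s\|_{L_2^0(H)}^2 \, ds$.

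The core computation is to apply It\^{o}'s formula to $f(y) = \|y\|^{2p}$, whose derivatives are $D_y f(y) = 2p \|y\|^{2p-2} \langle y, \bullet \rangle$ and a second derivative $D_{yy} f(y)$ that contributes a trace term. Taking expectations kills the martingale part, and one is left with
\begin{align*}
\mathbb{E}\big[ \|M_T\|^{2p} \big] = p(2p-1) \, \mathbb{E} \bigg[ \int_0^T \|M_s\|^{2p-2} \|X_s\|_{L_2^0(H)}^2 \, ds \bigg],
\end{align*}
where I have bounded the second-derivative trace term using $\sum_{j} \|X_s^j\|^2 = \|X_s\|_{L_2^0(H)}^2$ together with Cauchy--Schwarz on the Hessian, which produces the factor $p(2p-1)$. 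Next I would apply H\"{o}lder's inequality to the integrand with the conjugate exponents $\frac{p}{p-1}$ and $p$, separating the factor $\|M_s\|^{2p-2}$ from $\|X_s\|_{L_2^0(H)}^2$, to arrive at an estimate of the form
\begin{align*}
\mathbb{E}\big[ \|M_T\|^{2p} \big] \leq p(2p-1) \, \Big( \sup_{s \leq T} \mathbb{E}\big[\|M_s\|^{2p}\big] \Big)^{\frac{p-1}{p}} \mathbb{E} \bigg[ \int_0^T \|X_s\|_{L_2^0(H)}^2 \, ds \bigg]^{1/p} \cdot T^{\text{(adjusting)}}.
\end{align*}

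The main obstacle, and the source of the precise constant $C_p$, is the bookkeeping required to turn the self-referential estimate above into a closed bound. To handle $\sup_{s \leq T} \mathbb{E}[\|M_s\|^{2p}]$ on the right-hand side, I would invoke Doob's $L^{2p}$ maximal inequality for the submartingale $\|M\|$, which supplies the factor $\big(\tfrac{2p}{2p-1}\big)^{2p}$; applying the whole scheme to the maximal function rather than to $M_T$ directly lets one absorb the supremum term into the left-hand side. Rearranging the resulting inequality $a \leq c \, a^{(p-1)/p} b^{1/p}$ into $a \leq c^p b$ and carefully tracking how the Doob factor enters to the power forced by the H\"{o}lder exponents is what yields the stated constant $C_p = (p(2p-1))^p \big(\tfrac{2p}{2p-1}\big)^{2p^2}$. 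The delicate points are ensuring the self-referential supremum is finite before one is allowed to absorb it (guaranteed by the preliminary boundedness reduction) and checking that the case $p=1$ degenerates correctly to the It\^{o} isometry.
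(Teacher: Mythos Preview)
The paper does not actually prove this lemma; it simply cites \cite[Lemma~3.1]{Atma-book}. Your outline is precisely the standard argument given there (It\^{o}'s formula applied to $\|y\|^{2p}$, bounding the Hessian trace, pulling out the running supremum and using H\"{o}lder with exponents $p/(p-1)$ and $p$, then Doob's $L^{2p}$ inequality and absorption), so there is no substantive difference to report.

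Two small corrections to your write-up. First, the displayed identity
\[
\mathbb{E}\big[\|M_T\|^{2p}\big] = p(2p-1)\,\mathbb{E}\bigg[\int_0^T \|M_s\|^{2p-2}\|X_s\|_{L_2^0(H)}^2\,ds\bigg]
\]
should be an inequality ``$\leq$'': the Hessian of $\|y\|^{2p}$ contains the cross term $2p(2p-2)\|y\|^{2p-4}\langle y,\cdot\rangle^2$, and the Cauchy--Schwarz step you invoke bounds this from above, it does not compute it exactly. Second, there is no ``$T^{\text{(adjusting)}}$'' factor: once you pull out $\sup_{s\leq T}\|M_s\|^{2p-2}$ before applying H\"{o}lder in $\omega$, the $ds$-integral of $\|X_s\|_{L_2^0(H)}^2$ stays intact and no power of $T$ appears. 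With these fixes the absorption step $a\leq c\,a^{(p-1)/p}b^{1/p}\Rightarrow a\leq c^p b$ produces exactly the stated constant $C_p$.
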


\begin{proof}
See \cite[Lemma~3.1]{Atma-book}.
\end{proof}

\begin{theorem}\label{thm-existenz}
Suppose there exists a constant $L \geq 0$ such that 
\begin{align}\label{Lipschitz-alpha}
\| \alpha(t, h_1) - \alpha(t, h_2) \| &\leq L \| h_1 - h_2 \|,
\\ \label{Lipschitz-sigma} \| \sigma(t, h_1) - \sigma(t, h_2) \|_{L_2^0(H)} &\leq L \| h_1 - h_2 \|
\end{align}
for all $t \geq 0$ and all $h_1, h_2 \in H$, and suppose there exists a constant $K \geq 0$ such that
\begin{align}
\label{lin-growth-alpha} \| \alpha(t, h) \| &\leq K (1 + \| h \|),
\\ \label{lin-growth-sigma} \| \sigma(t, h) \|_{L_2^0(H)} &\leq K (1 + \| h \|)
\end{align}
for all $t \geq 0$ and all $h \in H$. Then, for every $\mathcal{F}_0$-measurable random variable $h_0 : \Omega \rightarrow H$ there exists a (up to indistinguishability) unique mild solution $X$ to (\ref{SPDE}).
\end{theorem}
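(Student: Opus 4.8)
The plan is to realize the mild solution as a fixed point of the integral operator built into the definition of a mild solution, and to read off uniqueness directly from the earlier result. Uniqueness is in fact already available: the global Lipschitz conditions (\ref{Lipschitz-alpha}), (\ref{Lipschitz-sigma}) imply the local Lipschitz conditions (\ref{loc-Lipschitz-alpha}), (\ref{loc-Lipschitz-sigma}), so Theorem~\ref{thm-starke-Eind} applied with $\tau \equiv \infty$ and $g_0 = h_0$ shows that any two mild solutions with the same initial condition are indistinguishable. For existence I would first reduce to an integrable initial condition: fix $p > 1$ and set $h_0^{(n)} := h_0 \mathbbm{1}_{\{\| h_0 \| \le n\}}$, which is bounded and hence lies in every $L^{2p}(\Omega;H)$. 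Once a mild solution $X^{(n)}$ is constructed for each $h_0^{(n)}$, Theorem~\ref{thm-starke-Eind} shows that $X^{(n)}$ and $X^{(m)}$ agree on $\{\| h_0 \| \le n \wedge m\}$; since $\mathbb{P}\big( \bigcup_n \{\| h_0 \| \le n\} \big) = 1$, these processes can be patched into a single mild solution $X$ for the original $h_0$.

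It therefore suffices to treat $h_0$ with $\mathbb{E}[\| h_0 \|^{2p}] < \infty$ and to work on a finite horizon $[0,T]$; the solutions on $[0,N]$, $N \in \mathbb{N}$, are then glued by uniqueness into a solution on $\mathbb{R}_+$. On the space $\mathcal{H}_T$ of continuous adapted $H$-valued processes on $[0,T]$ with norm $\| X \|_{\mathcal{H}_T}^{2p} := \sup_{t \in [0,T]} \mathbb{E}[\| X_t \|^{2p}]$ I define
\[
(\mathcal{F}X)_t := S_t h_0 + \int_0^t S_{t-s} \alpha(s, X_s)\,ds + \int_0^t S_{t-s} \sigma(s, X_s)\,dW_s ,
\]
whose fixed points are precisely the mild solutions. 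That $\mathcal{F}$ maps $\mathcal{H}_T$ into itself uses the linear growth conditions (\ref{lin-growth-alpha}), (\ref{lin-growth-sigma}) together with the growth estimate (\ref{Halbgruppe-Wachstum}) of Lemma~\ref{lemma-wachstum}, H\"older's inequality in time, and the moment bound of Lemma~\ref{lemma-est-Ito-2p} for the stochastic term; continuity and adaptedness of $\mathcal{F}X$ follow from Proposition~\ref{prop-conv-dt-stetig} and from Proposition~\ref{prop-conv-dW-stetig}(1), which is exactly where $p > 1$ enters.

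The central step is a contraction estimate for an iterate of $\mathcal{F}$. For $X, Y \in \mathcal{H}_T$ the difference $(\mathcal{F}X)_t - (\mathcal{F}Y)_t$ has no initial term, and splitting it into its drift and diffusion parts I would estimate, using $\| a+b \|^{2p} \le 2^{2p-1}(\| a \|^{2p} + \| b \|^{2p})$, H\"older's inequality in $s$, the semigroup bound (\ref{Halbgruppe-Wachstum}), Lemma~\ref{lemma-est-Ito-2p}, and the Lipschitz conditions (\ref{Lipschitz-alpha}), (\ref{Lipschitz-sigma}),
\[
\mathbb{E}\big[ \| (\mathcal{F}X)_t - (\mathcal{F}Y)_t \|^{2p} \big] \le C \int_0^t \mathbb{E}\big[ \| X_s - Y_s \|^{2p} \big]\,ds ,
\]
with a constant $C = C(T, M, \omega, L, p)$. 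Iterating this inequality $n$ times yields $\| \mathcal{F}^n X - \mathcal{F}^n Y \|_{\mathcal{H}_T}^{2p} \le \frac{(CT)^n}{n!} \| X - Y \|_{\mathcal{H}_T}^{2p}$; since $(CT)^n/n! \to 0$, the map $\mathcal{F}^n$ is a contraction for $n$ large enough, and the extended fixed point result Corollary~\ref{cor-fixpunkt-n} then provides a unique fixed point of $\mathcal{F}$ in $\mathcal{H}_T$, that is, a mild solution on $[0,T]$.

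The main obstacle is the bookkeeping for the stochastic convolution in both the self-mapping and the contraction step: controlling $\mathbb{E}\big\| \int_0^t S_{t-s}(\cdots)\,dW_s \big\|^{2p}$ requires Lemma~\ref{lemma-est-Ito-2p} followed by a H\"older step in time to convert the $L^2(ds)$-norm raised to the power $p$ into an integral of the form $\int_0^t \mathbb{E}\| \cdots \|^{2p}\,ds$, and it is precisely the choice $p > 1$ that both makes Proposition~\ref{prop-conv-dW-stetig}(1) applicable (hence yields a continuous version of $\mathcal{F}X$) and keeps the constants finite. A secondary point requiring care is that $\mathcal{H}_T$ is complete as a space of continuous processes — which again rests on the regularity results of Propositions~\ref{prop-conv-dt-stetig} and \ref{prop-conv-dW-stetig} — and that the two patching arguments, the localization in $h_0$ and the gluing over the intervals $[0,N]$, are legitimate; both of these reduce to the uniqueness statement of Theorem~\ref{thm-starke-Eind}.
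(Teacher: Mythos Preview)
Your approach is the paper's: uniqueness via Theorem~\ref{thm-starke-Eind}, a fixed-point argument for an iterate using Corollary~\ref{cor-fixpunkt-n} on a finite horizon under the extra hypothesis $\mathbb{E}[\|h_0\|^{2p}] < \infty$ with $p>1$, and then localization $h_0^{(n)} = h_0 \mathbbm{1}_{\{\|h_0\| \le n\}}$ for general $h_0$. The contraction estimate you write down and its iteration to $(CT)^n/n!$ are exactly the paper's.

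The one substantive difference is the fixed-point space. You work in $\mathcal{H}_T$, the continuous adapted processes with norm $\sup_{t \le T} \mathbb{E}[\|X_t\|^{2p}]$; the paper instead uses $L_T^{2p}(H) := L^{2p}(\Omega \times [0,T], \mathcal{P}_T, \mathbb{P} \otimes dt; H)$, obtains the fixed point there, and only afterward (a separate Step~1D) shows this fixed point has a continuous version via Propositions~\ref{prop-conv-dt-stetig} and \ref{prop-conv-dW-stetig}. The paper's choice is cleaner precisely at the point you flag as ``requiring care'': your $\mathcal{H}_T$ with the $\sup_t \mathbb{E}[\,\cdot\,]$ norm is \emph{not} complete as a space of continuous processes --- a Cauchy sequence converges for each $t$ in $L^{2p}(\Omega;H)$, but the limiting process need not be continuous --- and Propositions~\ref{prop-conv-dt-stetig}, \ref{prop-conv-dW-stetig} do not repair this, since they only say that $\mathcal{F}$ outputs continuous processes, not that limits in your norm preserve continuity. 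The remedy is exactly what the paper does: run the contraction argument in a manifestly complete space such as $L_T^{2p}(H)$, and then upgrade the fixed point to a continuous version using $X = \mathcal{F}X$ together with the regularity propositions. With that adjustment your argument and the paper's coincide.
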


\begin{proof}
The uniqueness of mild solutions to (\ref{SPDE}) is a direct consequence of Theorem~\ref{thm-starke-Eind}, and hence, we may concentrate on the existence proof, which we divide into several steps:

\noindent \emph{Step 1:} First, we suppose that the initial condition $h_0$ satisfies $\mathbb{E}[ \| h_0 \|^{2p} ] < \infty$ for some $p > 1$. Let $T \geq 0$ be arbitrary. We define the Banach space
\begin{align*}
L_T^{2p}(H) := L^{2p}(\Omega \times [0, T], \mathcal{P}_T, \mathbb{P} \otimes dt; H)
\end{align*}
and prove that the variation of constants equation
\begin{align}\label{var-const}
X_{t} = S_{t} h_0 + \int_{0}^{t} S_{t - s} \alpha(s, X_s) ds + \int_{0}^{t} S_{t - s} \sigma(s, X_s) dW_s, \quad t \in [0, T]
\end{align}
has a unique solution in the space $L_T^{2p}(H)$. This is done in the following three steps:

\noindent \emph{Step 1A:} For $X \in L_T^{2p}(H)$ we define the process $\Phi X$ by
\begin{align*}
(\Phi X)_t = S_t h_0 + \int_0^t S_{t-s} \alpha(s, X_s) ds + \int_0^t S_{t-s} \sigma(s, X_s) d W_s, \quad t \in [0, T].
\end{align*}
Then the process $\Phi X$ is well-defined. Indeed, by the growth estimate (\ref{Halbgruppe-Wachstum}), the linear growth condition (\ref{lin-growth-alpha}) and H\"{o}lder's inequality we have
\begin{align*}
&\mathbb{E} \bigg[ \int_0^T \| S_{t-s} \alpha(s, X_s) \| ds \bigg] \leq M e^{\omega T} \mathbb{E} \bigg[ \int_0^T \| \alpha(s, X_s) \| ds \bigg] 
\\ &\leq M e^{\omega T} K \mathbb{E} \bigg[ \int_0^T (1 + \|X_s\|) ds \bigg] 
= M e^{\omega T} K \bigg( T + \mathbb{E} \bigg[ \int_0^T \|X_s\| ds \bigg] \bigg)
\\ &\leq M e^{\omega T} K \bigg( T + T^{1 - \frac{1}{2p}} \mathbb{E} \bigg[ \int_0^T \|X_s\|^{2p} ds \bigg]^{1/2p} \bigg) < \infty.
\end{align*}
Furthermore, by the growth estimate (\ref{Halbgruppe-Wachstum}), the linear growth condition (\ref{lin-growth-sigma}) and H\"{o}lder's inequality we have
\begin{align*}
&\mathbb{E} \bigg[ \int_0^T \| S_{t-s} \sigma(s, X_s) \|_{L_2^0(H)}^2 ds \bigg] \leq \big( M e^{\omega T} \big)^2 \mathbb{E} \bigg[ \int_0^T \| \sigma(s, X_s) \|_{L_2^0(H)}^2 ds \bigg] 
\\ &\leq \big( M e^{\omega T} K \big)^2 \mathbb{E} \bigg[ \int_0^T (1 + \|X_s\|)^2 ds \bigg]
\\ &\leq 2 \big( M e^{\omega T} K \big)^2 \mathbb{E} \bigg[ \int_0^T ( 1 + \|X_s\|^2 ) ds \bigg]
= 2 \big( M e^{\omega T} K \big)^2 \bigg( T + \mathbb{E} \bigg[ \int_0^T \|X_s\|^2 ds \bigg] \bigg) 
\\ &\leq 2 \big( M e^{\omega T} K \big)^2 \bigg( T + T^{1 - \frac{1}{p}} \mathbb{E} \bigg[ \int_0^T \| X_s \|^{2p} \bigg]^{1/p} \bigg) < \infty.
\end{align*}
The previous two estimates show that $\Phi$ is a well-defined mapping on $L_T^{2p}(H)$.

\noindent \emph{Step 1B:} Next, we show that the mapping $\Phi$ maps $L_T^{2p}(H)$ into itself, that is, we have $\Phi : L_T^{2p}(H) \rightarrow L_T^{2p}(H)$. Indeed, let $X \in L_T^{2p}(H)$ be arbitrary. Defining the process $\Phi_{\alpha} X$ and $\Phi_{\sigma} X$ as
\begin{align*}
(\Phi_{\alpha} X)_t &:= \int_0^t S_{t-s} \alpha(s, X_s) ds, \quad t \in [0, T], 
\\ (\Phi_{\sigma} X)_t &:= \int_0^t S_{t-s} \sigma(s, X_s) d W_s, \quad t \in [0, T],
\end{align*}
we have
\begin{align*}
(\Phi X)_t = S_t h_0 + (\Phi_{\alpha} X)_t + (\Phi_{\sigma} X)_t, \quad t \in [0, T].
\end{align*}
By the growth estimate (\ref{Halbgruppe-Wachstum}) we have
\begin{align*}
\mathbb{E} \bigg[ \int_0^T \| S_t h_0 \|^{2p} dt \bigg] \leq \big( M e^{\omega T} \big)^{2p} T \mathbb{E} \big[ \| h_0 \|^{2p} \big] < \infty.
\end{align*}
By H\"{o}lder's inequality and the growth estimate (\ref{Halbgruppe-Wachstum}) we have
\begin{align*}
&\mathbb{E}\bigg[ \int_0^T \| (\Phi_{\alpha} X)_t \|^{2p} dt \bigg] = \mathbb{E} \bigg[ \int_0^T \bigg\| \int_0^t S_{t-s} \alpha(s, X_s) ds \bigg\|^{2p} dt \bigg] 
\\ &\leq t^{2p-1} \mathbb{E} \bigg[ \int_0^T \int_0^t \| S_{t-s} \alpha(s, X_s) \|^{2p} ds dt \bigg]
\\ &\leq T^{2p - 1} \big( M e^{\omega T} \big)^{2p} \mathbb{E} \bigg[ \int_0^T \int_0^t \| \alpha(s, X_s) \|^{2p} ds dt \bigg],
\end{align*}
and hence, by the linear growth condition (\ref{lin-growth-alpha}) and H\"{o}lder's inequality we obtain
\begin{align*}
&\mathbb{E}\bigg[ \int_0^T \| (\Phi_{\alpha} X)_t \|^{2p} dt \bigg] \leq T^{2p - 1} \big( M e^{\omega T} K \big)^{2p} \mathbb{E} \bigg[ \int_0^T \int_0^t ( 1 + \| X_s \| )^{2p} ds dt \bigg]
\\ &\leq T^{2p - 1} \big( M e^{\omega T} K \big)^{2p} 2^{2p - 1} \mathbb{E} \bigg[ \int_0^T \int_0^t ( 1 + \| X_s \|^{2p}) ds dt \bigg]
\\ &\leq (2T)^{2p - 1} \big( M e^{\omega T} K \big)^{2p} \bigg( \frac{T^2}{2} + T \mathbb{E} \bigg[ \int_0^T \| X_s \|^{2p} ds \bigg] \bigg) < \infty.
\end{align*}
Furthermore, by Lemma~\ref{lemma-est-Ito-2p} and the growth estimate (\ref{Halbgruppe-Wachstum}) we have
\begin{align*}
&\mathbb{E}\bigg[ \int_0^T \| (\Phi_{\sigma} X)_t \|^{2p} dt \bigg] = \mathbb{E} \bigg[ \int_0^T \bigg\| \int_0^t S_{t-s} \sigma(s, X_s) d W_s \bigg\|^{2p} dt \bigg] 
\\ &= \int_0^T \mathbb{E} \Bigg[ \bigg\| \int_0^t S_{t-s} \sigma(s, X_s) dW_s \bigg\|^{2p} \Bigg] dt 
\\ &\leq C_p \int_0^T \mathbb{E} \bigg[ \int_0^t \| S_{t-s} \sigma(s, X_s) \|_{L_2^0(H)}^2 ds \bigg]^p dt
\\ &\leq C_p \big( M e^{\omega T} \big)^{2p} \int_0^T \mathbb{E} \bigg[ \int_0^t \| \sigma(s, X_s) \|_{L_2^0(H)}^2 ds \bigg]^p dt
\end{align*}
and hence, by the linear growth condition (\ref{lin-growth-sigma}) and H\"{o}lder's inequality we obtain
\begin{align*}
&\mathbb{E}\bigg[ \int_0^T \| (\Phi_{\sigma} X)_t \|^{2p} dt \bigg] \leq C_p \big( M e^{\omega T} \big)^{2p} t^{p - 1} \mathbb{E} \bigg[ \int_0^T \int_0^t \| \sigma(s, X_s) \|_{L_2^0(H)}^{2p} ds dt \bigg] 
\\ &\leq C_p \big( M e^{\omega T} K \big)^{2p} T^{p - 1} \mathbb{E} \bigg[ \int_0^T \int_0^t ( 1 + \| X_s \| )^{2p} ds dt \bigg]
\\ &\leq C_p \big( M e^{\omega T} K \big)^{2p} T^{p - 1} 2^{2p - 1} \mathbb{E} \bigg[ \int_0^T \int_0^t ( 1 + \| X_s \|^{2p}) ds dt \bigg]
\\ &\leq C_p \big( M e^{\omega T} K \big)^{2p} 2^p (2T)^{p - 1} \bigg( \frac{T^2}{2} + T \mathbb{E} \bigg[ \int_0^T \| X_s \|^{2p} ds \bigg] \bigg) < \infty.
\end{align*}
The previous three estimates show that $\Phi X \in L^{2p}(H)$. Consequently, the mapping $\Phi$ maps $L_T^{2p}(H)$ into itself.

\noindent \emph{Step 1C:} Now, we show that for some index $n \in \mathbb{N}$ the mapping $\Phi^n$ is a contraction on $L_T^{2p}(H)$. Let $X, Y \in L_T^{2p}(H)$ and $t \in [0, T]$ be arbitrary. By H\"{o}lder's inequality, the growth estimate (\ref{Halbgruppe-Wachstum}) and the Lipschitz condition (\ref{Lipschitz-alpha}) we have
\begin{align*}
&\mathbb{E}\big[ \| (\Phi_{\alpha} X)_t - (\Phi_{\alpha} Y)_t \|^{2p} \big] 
= \mathbb{E} \Bigg[ \bigg\| \int_0^t S_{t-s} \alpha(s, X_s) ds - \int_0^t S_{t-s} \alpha(s, Y_s) ds \bigg\|^{2p} \Bigg] 
\\ &= \mathbb{E} \Bigg[ \bigg\| \int_0^t S_{t-s} \big( \alpha(s, X_s) - \alpha(s, Y_s) \big) ds \bigg\|^{2p} \Bigg]
\\ &\leq t^{2p-1} \mathbb{E} \bigg[ \int_0^t \big\| S_{t-s} \big( \alpha(s, X_s) - \alpha(s, Y_s) \big) \big\|^{2p} ds \bigg]
\\ &\leq T^{2p - 1} \big( M e^{\omega T} \big)^{2p} \mathbb{E} \bigg[ \int_0^t \| \alpha(s, X_s) - \alpha(s, Y_s) \|^{2p} ds \bigg] 
\\ &\leq T^{2p - 1} \big( M e^{\omega T} L \big)^{2p} \int_0^t \mathbb{E} \big[ \| X_s - Y_s \|^{2p} \big] ds.
\end{align*}
Furthermore, by Lemma~\ref{lemma-est-Ito-2p}, the growth estimate (\ref{Halbgruppe-Wachstum}), the Lipschitz condition (\ref{Lipschitz-sigma}) and H\"{o}lder's inequality we obtain
\begin{align*}
&\mathbb{E}\big[ \| (\Phi_{\sigma} X)_t - (\Phi_{\sigma} Y)_t \|^{2p} \big] = \mathbb{E} \Bigg[ \bigg\| \int_0^t S_{t-s} \sigma(s, X_s) d W_s - \int_0^t S_{t-s} \sigma(s, Y_s) d W_s \bigg\|^{2p} \Bigg] 
\\ &= \mathbb{E} \Bigg[ \bigg\| \int_0^t S_{t-s} \big( \sigma(s, X_s) - \sigma(s, Y_s) \big) dW_s \bigg\|^{2p} \Bigg]
\\ &\leq C_p \mathbb{E} \bigg[ \int_0^t \big\| S_{t-s} \big( \sigma(s, X_s) - \sigma(s, Y_s) \big) \big\|_{L_2^0(H)}^2 ds \bigg]^p
\\ &\leq C_p \big( M e^{\omega T} \big)^{2p} \mathbb{E} \Bigg[ \int_0^t \| \sigma(s, X_s) - \sigma(s, Y_s) \|_{L_2^0(H)}^2 ds \Bigg]^p
\\ &\leq C_p \big( M e^{\omega T} L \big)^{2p} \int_0^t \mathbb{E} \big[ \| X_s - Y_s \|^{2p} \big] ds.
\end{align*}
Therefore, defining the constant
\begin{align*}
C := 2^{2p-1} \big( T^{2p - 1} \big( M e^{\omega T} L \big)^{2p} + C_p \big( M e^{\omega T} L \big)^{2p} \big),
\end{align*}
by H\"{o}lder's inequality we get
\begin{align*}
&\mathbb{E}\big[ \| (\Phi X)_t - (\Phi Y)_t \|^{2p} \big]
\\ &\leq 2^{2p-1} \big( \mathbb{E}\big[ \| (\Phi_{\alpha} X)_t - (\Phi_{\alpha} Y)_t \|^{2p} \big] + \mathbb{E}\big[ \| (\Phi_{\sigma} X)_t - (\Phi_{\sigma} Y)_t \|^{2p} \big] \big)
\\ &\leq C \int_0^t \mathbb{E} \big[ \| X_s - Y_s \|^{2p} \big] ds.
\end{align*}
Thus, by induction for every $n \in \mathbb{N}$ we obtain
\begin{align*}
&\| \Phi^n X - \Phi^n Y \|_{L_T^{2p}(H)} = \bigg( \int_0^T \mathbb{E}\big[ \| (\Phi^n X)_{t_1} - (\Phi^n Y)_{t_1} \|^{2p} \big] dt_1 \bigg)^{1/2p}
\\ &\leq \bigg( C \int_0^T \bigg( \int_0^{t_1} \mathbb{E} \big[ \| (\Phi^{n-1} X)_{t_2} - (\Phi^{n-1} Y)_{t_2} \|^{2p} \big] dt_2 \bigg) dt_1 \bigg)^{1/2p}
\\ &\leq \ldots \leq \bigg( C^n \int_0^T \int_0^{t_1} \cdots \int_0^{t_{n-1}} \bigg( \int_0^{t_n} \mathbb{E} \big[ \|X_s - Y_s\|^{2p} \big] ds \bigg) dt_n \ldots dt_2 dt_1 \bigg)^{1/2p}
\\ &\leq \bigg( C^n \underbrace{\bigg( \int_0^T \int_0^{t_1} \cdots \int_0^{t_{n-1}} 1 dt_n \ldots dt_2 dt_1 \bigg)}_{= \frac{T^n}{n!}} \mathbb{E} \bigg[ \int_0^{T} \|X_s - Y_s\|^{2p} ds \bigg] \bigg)^{1/2p}
\\ &= \underbrace{\bigg( \frac{(C T)^n}{n!} \bigg)^{1/2p}}_{\rightarrow 0 \text{ for } n \rightarrow \infty} \| X - Y \|_{L_T^{2p}(H)}.
\end{align*}
Consequently, there exists an index $n \in \mathbb{N}$ such that $\Phi^n$ is a contraction, and hence, according to the extension of the Banach fixed point theorem (see Corollary~\ref{cor-fixpunkt-n}) the mapping $\Phi$ has a unique fixed point $X \in L_T^{2p}(H)$. This fixed point $X$ is a solution to the variation of constants equation (\ref{var-const}). Since $T \geq 0$ was arbitrary, there exists a process $X$ which is a solution of the variation of constants equation
\begin{align*}
X_{t} = S_{t} h_0 + \int_{0}^{t} S_{t - s} \alpha(s, X_s) ds + \int_{0}^{t} S_{t - s} \sigma(s, X_s) dW_s, \quad t \geq 0.
\end{align*}
\noindent \emph{Step 1D:} In order to prove that $X$ is a mild solution to (\ref{SPDE}), it remains to show that $X$ has a continuous version. By Lemma~\ref{lemma-C0-stetig}, the process 
\begin{align*}
t \mapsto S_t h_0, \quad t \geq 0
\end{align*}
is continuous, and, by Proposition~\ref{prop-conv-dt-stetig}, the process
\begin{align*}
\int_{0}^{t} S_{t - s} \alpha(s, X_s) ds, \quad t \geq 0
\end{align*}
is continuous, too. Moreover, for every $T \geq 0$ we have, by the linear growth condition (\ref{lin-growth-sigma}), H\"{o}lder's inequality, and since $X \in L_T^{2p}(H)$, the estimate
\begin{align*}
\mathbb{E} \bigg[ \int_0^T \| \sigma(s, X_s) \|_{L_2^0(H)}^{2p} ds \bigg] &\leq K^{2p} \mathbb{E} \bigg[ \int_0^T (1 + \| X_s \|)^{2p} ds \bigg]  
\\ &\leq K^{2p} 2^{2p-1} \mathbb{E} \bigg[ \int_0^T (1 + \| X_s \|^{2p}) ds \bigg]
\\ &= K (2K)^{2p-1} \bigg( T + \mathbb{E} \bigg[ \int_0^T \| X_s \|^{2p} ds \bigg] \bigg) < \infty.
\end{align*}
Thus, by Proposition~\ref{prop-conv-dW-stetig} the stochastic convolution $\sigma \star W$ given by
\begin{align*}
(\sigma \star W)_t = \int_{0}^{t} S_{t - s} \sigma(s, X_s) dW_s, \quad t \geq 0
\end{align*}
has a continuous version, and consequently, the process $X$ has a continuous version, too. This continuous version is a mild solution to (\ref{SPDE}).

\noindent \emph{Step 2:} Now let $h_0 : \Omega \rightarrow H$ be an arbitrary $\mathcal{F}_0$-measurable random variable. We define the sequence $(h_n)_{n \in \mathbb{N}}$ of $\mathcal{F}_0$-measurable random variables as
\begin{align*}
h_0^n := h_0 \mathbbm{1}_{\{ \| h_0 \| \leq n \}}, \quad n \in \mathbb{N}.
\end{align*}
Let $n \in \mathbb{N}$ be arbitrary. Then, as $h_0^n$ is bounded, we have $\mathbb{E}[ \| h_0^n \|^{2p} ] < \infty$ for all $p > 1$. By Step 1 the SPDE
\begin{align*}
\left\{
\begin{array}{rcl}
dX_t^n & = & (AX_t^n + \alpha(t, X_t^n))dt + \sigma(t, X_t^n) dW_t
\medskip
\\ X_0^n & = & h_0^n
\end{array}
\right.
\end{align*}
has a mild solution $X^n$. We define the sequence $(\Omega_n)_{n \in \mathbb{N}} \subset \mathcal{F}_0$ as 
\begin{align*}
\Omega_n := \{ \| h_0 \| \leq n \}, \quad n \in \mathbb{N}.
\end{align*}
Then we have $\Omega_n \subset \Omega_m$ for $n \leq m$, we have $\Omega = \bigcup_{n \in \mathbb{N}} \Omega_n$ and we have
\begin{align*}
\Omega_n \subset \{ h_0^n = h_0^m \} \subset \{ h_0^n = h_0 \} \quad \text{for all $n \leq m$.}
\end{align*}
Thus, by Theorem~\ref{thm-starke-Eind} we have (up to indistinguishability)
\begin{align*}
X^n \mathbbm{1}_{\Omega_n} = X^m \mathbbm{1}_{\Omega_n} \quad \text{for all $n \leq m$.}
\end{align*}
Consequently, the process
\begin{align*}
X := \lim_{n \rightarrow \infty} X^n \mathbbm{1}_{\Omega_n}
\end{align*}
is a well-defined, continuous and adapted process, and we have
\begin{align*}
X^n \mathbbm{1}_{\Omega_n} = X^m \mathbbm{1}_{\Omega_n} = X \mathbbm{1}_{\Omega_n} \quad \text{for all $n \leq m$.}
\end{align*}
Furthermore, we obtain $\mathbb{P}$--almost surely
\begin{align*}
X_t &= \lim_{n \rightarrow \infty} X_t^n \mathbbm{1}_{\Omega_n}
\\ &= \lim_{n \rightarrow \infty} \mathbbm{1}_{\Omega_n} \bigg( S_{t} h_0^n + \int_{0}^{t} S_{t - s} \alpha(s, X_s^n) ds + \int_{0}^{t} S_{t - s} \sigma(s, X_s^n) dW_s \bigg)
\\ &= \lim_{n \rightarrow \infty} \bigg( S_t (\mathbbm{1}_{\Omega_n} h_0^n) + \int_{0}^{t} \mathbbm{1}_{\Omega_n} S_{t - s} \alpha(s, X_s^n) ds + \int_{0}^{t} \mathbbm{1}_{\Omega_n} S_{t - s} \sigma(s, X_s^n) dW_s \bigg)
\\ &= \lim_{n \rightarrow \infty} \bigg( S_t (\mathbbm{1}_{\Omega_n} h_0) + \int_{0}^{t} \mathbbm{1}_{\Omega_n} S_{t - s} \alpha(s, X_s) ds + \int_{0}^{t} \mathbbm{1}_{\Omega_n} S_{t - s} \sigma(s, X_s) dW_s \bigg)
\\ &= \lim_{n \rightarrow \infty} \mathbbm{1}_{\Omega_n} \bigg( S_{t} h_0 + \int_{0}^{t} S_{t - s} \alpha(s, X_s) ds + \int_{0}^{t} S_{t - s} \sigma(s, X_s) dW_s \bigg)
\\ &= S_{t} h_0 + \int_{0}^{t} S_{t - s} \alpha(s, X_s) ds + \int_{0}^{t} S_{t - s} \sigma(s, X_s) dW_s, \quad t \geq 0,
\end{align*}
proving that $X$ is a mild solution to (\ref{SPDE}).
\end{proof}

\begin{remark}
For the proof of Theorem~\ref{thm-existenz} we have used Corollary~\ref{cor-fixpunkt-n}, which is a slight extension of the Banach fixed point theorem. Such an idea has been applied, e.g., in \cite{Barbara-pathwise}. 
\end{remark}

\begin{remark}
A recent method for proving existence and uniqueness of mild solutions to the SPDE (\ref{SPDE}) is the ``method of the moving frame'' presented in \cite{SPDE}, see also \cite{Tappe-Refine}. It allows to reduce SPDE problems to the study of SDEs in infinite dimension. In order to apply this method, we need that the semigroup $(S_t)_{t \geq 0}$ is a semigroup of pseudo-contractions.
\end{remark}

We close this section with a consequence about the existence of weak solutions.

\begin{corollary}\label{cor-existence}
Suppose that conditions (\ref{Lipschitz-alpha})--(\ref{lin-growth-sigma}) are fulfilled. Let $h_0 : \Omega \rightarrow H$ be a $\mathcal{F}_0$-measurable random variable such that $\mathbb{E}[ \| h_0 \|^{2p} ] < \infty$ for some $p > 1$. Then there exists a (up to indistinguishability) unique weak solution $X$ to (\ref{SPDE}).
\end{corollary}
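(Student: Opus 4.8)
The plan is to obtain the weak solution by first producing a mild solution via Theorem~\ref{thm-existenz} and then upgrading it to a weak solution through Proposition~\ref{prop-mild-schwach}. First I would apply Theorem~\ref{thm-existenz}: since the Lipschitz and linear growth conditions (\ref{Lipschitz-alpha})--(\ref{lin-growth-sigma}) are in force, there exists a (up to indistinguishability) unique mild solution $X$ to (\ref{SPDE}). The additional hypothesis $\mathbb{E}[\|h_0\|^{2p}] < \infty$ is exactly the assumption under which Step~1 of that proof operates, and there the mild solution is produced as the fixed point of $\Phi$ inside the space $L_T^{2p}(H)$ for every $T \geq 0$. Consequently $X$ enjoys the a priori bound
\[
\mathbb{E}\bigg[ \int_0^T \| X_s \|^{2p}\, ds \bigg] < \infty \quad \text{for all $T \geq 0$.}
\]

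Next I would verify the integrability condition (\ref{Fubini-condition}) required by Proposition~\ref{prop-mild-schwach}. By the linear growth estimate (\ref{lin-growth-sigma}) together with the elementary inequality $(1+a)^2 \leq 2(1+a^2)$ one gets
\[
\mathbb{E}\bigg[ \int_0^T \| \sigma(s, X_s) \|_{L_2^0(H)}^2\, ds \bigg] \leq 2 K^2 \bigg( T + \mathbb{E}\bigg[ \int_0^T \| X_s \|^2\, ds \bigg] \bigg),
\]
and since $p > 1$, H\"{o}lder's inequality in the time variable followed by Jensen's inequality for the concave map $x \mapsto x^{1/p}$ yields
\[
\mathbb{E}\bigg[ \int_0^T \| X_s \|^2\, ds \bigg] \leq T^{1 - 1/p} \bigg( \mathbb{E}\bigg[ \int_0^T \| X_s \|^{2p}\, ds \bigg] \bigg)^{1/p} < \infty.
\]
Hence (\ref{Fubini-condition}) holds, and Proposition~\ref{prop-mild-schwach} promotes $X$ to a weak solution.

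For uniqueness I would observe that any weak solution is, by Proposition~\ref{prop-schwach-mild}, also a mild solution; since the global Lipschitz conditions (\ref{Lipschitz-alpha}), (\ref{Lipschitz-sigma}) clearly imply the local Lipschitz conditions (\ref{loc-Lipschitz-alpha}), (\ref{loc-Lipschitz-sigma}) (take $L_n = L$ for every $n$), Theorem~\ref{thm-starke-Eind} forces any two such mild solutions to coincide up to indistinguishability. The only genuinely substantive step is the verification of (\ref{Fubini-condition}), and this is precisely where the moment assumption $\mathbb{E}[\|h_0\|^{2p}] < \infty$ is indispensable: it is what guarantees the $L^{2p}$-integrability of $X$ and thereby the square-integrability of $\sigma(\cdot, X_{\cdot})$ needed to invoke Fubini's theorem within Proposition~\ref{prop-mild-schwach}. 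The remaining steps are routine bookkeeping applications of the propositions established above.
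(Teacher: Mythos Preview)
Your proposal is correct and follows essentially the same route as the paper: produce the mild solution via Theorem~\ref{thm-existenz} (using the moment assumption to ensure $X \in L_T^{2p}(H)$), verify (\ref{Fubini-condition}) through the linear growth condition and H\"older's inequality, upgrade to a weak solution via Proposition~\ref{prop-mild-schwach}, and deduce uniqueness from Proposition~\ref{prop-schwach-mild} together with Theorem~\ref{thm-starke-Eind}. The only cosmetic difference is that the paper obtains the bound $\mathbb{E}[\int_0^T \|X_s\|^2\,ds] \le T^{1-1/p}\,\mathbb{E}[\int_0^T \|X_s\|^{2p}\,ds]^{1/p}$ by a single application of H\"older on the product measure $\mathbb{P}\otimes dt$, whereas you split it into H\"older in $s$ followed by Jensen in $\omega$; both are equivalent.
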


\begin{proof}
According to Proposition~\ref{prop-schwach-mild}, every weak solution $X$ to (\ref{SPDE}) is also a mild solution to (\ref{SPDE}). Therefore, the uniqueness of weak solutions to (\ref{SPDE}) is a consequence of Theorem~\ref{thm-starke-Eind}.

It remains to prove the existence of a weak solution to (\ref{SPDE}).
Let $T \geq 0$ be arbitrary. By Theorem~\ref{thm-existenz} and its proof there exists a mild solution $X \in L_T^{2p}(H)$ to (\ref{SPDE}). By the linear growth condition (\ref{lin-growth-sigma}) and H\"{o}lder's inequality we obtain
\begin{align*}
&\mathbb{E} \bigg[ \int_0^T \| \sigma(s,X_s) \|_{L_2^0(H)}^2 \bigg] \leq K^2 \mathbb{E} \bigg[ \int_0^T (1 + \| X_s \|)^2 ds \bigg] \leq 2 K^2 \mathbb{E} \bigg[ \int_0^T (1 + \| X_s \|^2) ds \bigg]
\\ &= 2K^2 \bigg( T + \mathbb{E} \bigg[ \int_0^T \| X_s \|^2 ds \bigg] \bigg) \leq 2K^2 \bigg( T + T^{1 - \frac{1}{p}} \mathbb{E} \bigg[ \int_0^T \| X_s \|^{2p} ds \bigg]^{1/p} \bigg) < \infty,
\end{align*}
showing that condition (\ref{Fubini-condition}) is fulfilled.
Thus, by Proposition~\ref{prop-mild-schwach} the process $X$ is also a weak solution to (\ref{SPDE}). 
\end{proof}

\section{Invariant manifolds for weak solutions to SPDEs}\label{sec-manifolds}

In this section, we deal with invariant manifolds for time-homogeneous SPDEs of the type (\ref{SPDE}). This topic arises from the natural desire to express the solutions of the SPDE (\ref{SPDE}), which generally live in the infinite dimensional Hilbert space $H$, by means of a finite dimensional state process, and thus, to ensure larger analytical tractability. Our goal is to find conditions on the generator $A$ and the coefficients $\alpha$, $\sigma$ such that for every starting point of a finite dimensional submanifold the solution process stays on the submanifold.

We start with the required preliminaries about finite dimensional submanifolds in Hilbert spaces. In the sequel, let $H$ be a separable Hilbert space.
 
\begin{definition}
Let $m, k \in \mathbb{N}$ be positive integers. A subset $\mathcal{M} \subset H$ is called a $m$-dimensional \emph{$C^k$-submanifold} of $H$, if for every $h \in \mathcal{M}$ there exist an open neighborhood $U \subset H$ of $h$, an open set $V \subset \mathbb{R}^m$ and a mapping $\phi \in C^2(V;H)$ such that:
\begin{enumerate}
\item The mapping $\phi : V \rightarrow U \cap \mathcal{M}$ is a homeomorphism.

\item For all $y \in V$ the mapping $D\phi(y)$ is injective.
\end{enumerate}
The mapping $\phi$ is called a \emph{parametrization} of $\mathcal{M}$ around $h$.
\end{definition}

In what follows, let $\mathcal{M}$ be a $m$-dimensional $C^k$-submanifold of $H$.

\begin{lemma}\label{lemma-change-para}
Let $\phi_i : V_i \rightarrow U_i \cap \mathcal{M}$, $i=1,2$ be two parametrizations with $W := U_1 \cap U_2 \cap \mathcal{M} \neq \emptyset$. Then the mapping
\begin{align*}
\phi_1^{-1} \circ \phi_2 : \phi_2^{-1}(W) \rightarrow \phi_1^{-1}(W)
\end{align*}
is a $C^k$-diffeomorphism.
\end{lemma}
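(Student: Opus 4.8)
The plan is to show that $\phi_1^{-1} \circ \phi_2$ is of class $C^k$; applying the same argument with the roles of $\phi_1$ and $\phi_2$ exchanged then shows that its inverse $\phi_2^{-1} \circ \phi_1$ is $C^k$ as well, which yields the claim. First I would record that the map is well defined and a homeomorphism: since $W = U_1 \cap U_2 \cap \mathcal{M}$ is open in $\mathcal{M}$ and each $\phi_i : V_i \rightarrow U_i \cap \mathcal{M}$ is a homeomorphism, the sets $\phi_i^{-1}(W)$ are open in $V_i$, and $\phi_1^{-1} \circ \phi_2$ is a composition of homeomorphisms. The real content is the $C^k$-regularity.

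The key idea, which copes with the infinite dimensionality of $H$, is that the derivative of a parametrization has finite dimensional image and can therefore be inverted after composing with a continuous projection. Fix $y_0 \in \phi_2^{-1}(W)$, put $h := \phi_2(y_0) \in W$ and $x_0 := \phi_1^{-1}(h) \in V_1$. The linear map $T := D\phi_1(x_0) : \mathbb{R}^m \rightarrow H$ is injective, so its image $F := T(\mathbb{R}^m)$ is an $m$-dimensional, hence closed, subspace of $H$, and the orthogonal projection $P : H \rightarrow F$ is a continuous linear operator with $P|_F = {\rm id}_F$. I would then consider the map $\psi := P \circ \phi_1 : V_1 \rightarrow F$, which is $C^k$ and satisfies $D\psi(x_0) = P \circ T = T$, viewed as a map onto $F$; since $T : \mathbb{R}^m \rightarrow F$ is a bijection between spaces of equal finite dimension, $D\psi(x_0)$ is a linear isomorphism.

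By the inverse function theorem in finite dimensions (identifying $F \cong \mathbb{R}^m$) there is an open neighborhood $V_1' \subset V_1$ of $x_0$ on which $\psi$ is a $C^k$-diffeomorphism onto the open set $\psi(V_1') \subset F$, with $C^k$ inverse $\psi^{-1}$. For $x \in V_1'$ the relation $\psi^{-1}(P(\phi_1(x))) = x$ gives, for every $h' = \phi_1(x)$ in the open neighborhood $\phi_1(V_1')$ of $h$ in $\mathcal{M}$, the identity $\phi_1^{-1}(h') = \psi^{-1}(P(h'))$; that is, $\psi^{-1} \circ P$ is a local $C^k$ left inverse of $\phi_1$ along $\mathcal{M}$. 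Restricting to the open neighborhood $\phi_2^{-1}(\phi_1(V_1') \cap W)$ of $y_0$, I can therefore write
\begin{align*}
\phi_1^{-1} \circ \phi_2 = \psi^{-1} \circ P \circ \phi_2,
\end{align*}
which is a composition of the $C^k$ map $\phi_2$, the continuous linear map $P$, and the $C^k$ map $\psi^{-1}$, hence $C^k$ near $y_0$. As $y_0$ was arbitrary, $\phi_1^{-1} \circ \phi_2$ is $C^k$ on all of $\phi_2^{-1}(W)$.

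The main obstacle is precisely the infinite dimensionality of the ambient space: one cannot directly complement the tangent directions by a finite basis as in the classical argument in $\mathbb{R}^n$. This is resolved by exploiting the Hilbert structure, namely that the finite dimensional image $F$ of $D\phi_1(x_0)$ is closed and admits the continuous orthogonal projection $P$, reducing the construction of a smooth left inverse to a genuinely finite dimensional application of the inverse function theorem. (Here I use that the parametrizations are of class $C^k$, as the regularity of $\mathcal{M}$ requires.)
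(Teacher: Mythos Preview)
Your argument is correct and is precisely the standard proof of this fact for submanifolds of a Hilbert space: project onto the finite dimensional image of $D\phi_1(x_0)$, apply the finite dimensional inverse function theorem to $P \circ \phi_1$, and read off a local $C^k$ left inverse of $\phi_1$ along $\mathcal{M}$. The paper itself does not give a proof here but simply refers to \cite[Lemma~6.1.1]{fillnm}; your write-up is essentially the argument one finds in that reference, so there is nothing to compare.
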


\begin{proof}
See \cite[Lemma~6.1.1]{fillnm}.
\end{proof}

\begin{corollary}\label{cor-tang-well}
Let $h \in \mathcal{M}$ be arbitrary and let $\phi_i : V_i \rightarrow U_i \cap \mathcal{M}$, $i=1, 2$ be two parametrizations of $\mathcal{M}$ around $h$. Then we have
\begin{align*}
D \phi_1(y_1)(\mathbb{R}^m) = D \phi_2(y_2)(\mathbb{R}^m),
\end{align*}
where $y_i = \phi_i^{-1}(h)$ for $i=1, 2$.
\end{corollary}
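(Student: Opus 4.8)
The plan is to reduce everything to the change-of-parametrization lemma (Lemma~\ref{lemma-change-para}) together with the chain rule. First I would observe that, since both $\phi_1$ and $\phi_2$ are parametrizations of $\mathcal{M}$ around $h$, the point $h$ lies in $W := U_1 \cap U_2 \cap \mathcal{M}$, so $W \neq \emptyset$ and Lemma~\ref{lemma-change-para} applies. Set $\psi := \phi_1^{-1} \circ \phi_2 : \phi_2^{-1}(W) \rightarrow \phi_1^{-1}(W)$, which is a $C^k$-diffeomorphism between open subsets of $\mathbb{R}^m$. Note that $y_2 = \phi_2^{-1}(h) \in \phi_2^{-1}(W)$ and that
\begin{align*}
\psi(y_2) = \phi_1^{-1}(\phi_2(y_2)) = \phi_1^{-1}(h) = y_1.
\end{align*}

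Next I would exploit the identity $\phi_2 = \phi_1 \circ \psi$, which holds on the open set $\phi_2^{-1}(W)$. Differentiating at $y_2$ and using the chain rule gives
\begin{align*}
D\phi_2(y_2) = D\phi_1(\psi(y_2)) \circ D\psi(y_2) = D\phi_1(y_1) \circ D\psi(y_2).
\end{align*}
Since $\psi$ is a $C^k$-diffeomorphism, its differential $D\psi(y_2) : \mathbb{R}^m \rightarrow \mathbb{R}^m$ is a linear isomorphism, and hence $D\psi(y_2)(\mathbb{R}^m) = \mathbb{R}^m$. Applying both sides of the previous identity to $\mathbb{R}^m$ therefore yields
\begin{align*}
D\phi_2(y_2)(\mathbb{R}^m) = D\phi_1(y_1)\big( D\psi(y_2)(\mathbb{R}^m) \big) = D\phi_1(y_1)(\mathbb{R}^m),
\end{align*}
which is the asserted equality.

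There is essentially no hard obstacle here: the entire geometric content is already packaged in Lemma~\ref{lemma-change-para}, and what remains is a bookkeeping check that the transition map $\psi$ is defined at $y_2$ and sends it to $y_1$, together with the elementary fact that the differential of a diffeomorphism is a linear isomorphism. The only point requiring a little care is to confirm that the composition identity $\phi_2 = \phi_1 \circ \psi$ holds on a full neighborhood of $y_2$, and not merely at the single point, so that the chain rule is legitimately applicable; this is immediate once one notes that $\psi$ is defined on the \emph{open} set $\phi_2^{-1}(W)$ and satisfies $\phi_1 \circ \psi = \phi_2$ there by construction.
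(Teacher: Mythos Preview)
Your proof is correct and follows essentially the same approach as the paper: both invoke Lemma~\ref{lemma-change-para} to obtain the transition diffeomorphism and then apply the chain rule to $\phi_2 = \phi_1 \circ (\phi_1^{-1} \circ \phi_2)$. The only cosmetic difference is that the paper argues one inclusion and then says ``analogously'' for the reverse, whereas you obtain equality in one stroke by observing that $D\psi(y_2)$ is a linear isomorphism.
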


\begin{proof}
Since $U_1$ and $U_2$ are open neighborhoods of $h$, we have $W := U_1 \cap U_2 \cap \mathcal{M} \neq \emptyset$. Thus, by Lemma~\ref{lemma-change-para} the mapping
\begin{align*}
\phi_1^{-1} \circ \phi_2 : \phi_2^{-1}(W) \rightarrow \phi_1^{-1}(W)
\end{align*}
is a $C^k$-diffeomorphism. Using the chain rule, we obtain
\begin{align*}
D \phi_2(y_2)(\mathbb{R}^m) &= D (\phi_1 \circ (\phi_1^{-1} \circ \phi_2))(y_2)(\mathbb{R}^m) = D \phi_1(y_1) D(\phi_1^{-1} \circ \phi_2)(y_2)(\mathbb{R}^m) 
\\ &\subset D \phi_1(y_1)(\mathbb{R}^m),
\end{align*}
and, analogously, we prove that $D \phi_1(y_1)(\mathbb{R}^m) \subset D \phi_2(y_2)(\mathbb{R}^m)$.
\end{proof}

\begin{definition}\label{def-tang-raum}
Let $h \in \mathcal{M}$ be arbitrary. The \emph{tangent space} of $\mathcal{M}$ to $h$ is the subspace
\begin{align*}
T_h \mathcal{M} := D \phi(y)(\mathbb{R}^m),
\end{align*}
where $y = \phi^{-1}(h)$ and $\phi : V \rightarrow U \cap \mathcal{M}$ denotes a parametrization of $\mathcal{M}$ around $h$.
\end{definition}

\begin{remark}
Note that, according to Corollary~\ref{cor-tang-well}, the Definition~\ref{def-tang-raum} of the tangent space $T_h \mathcal{M}$ does not depend on the choice of the parametrization $\phi : V \rightarrow U \cap \mathcal{M}$.
\end{remark}

\begin{proposition}\label{prop-ext-para}
Let $h \in \mathcal{M}$ be arbitrary, and let $\phi : V \rightarrow U \cap \mathcal{M}$ be a parametrization of $\mathcal{M}$ around $h$. Then there exist an open set $V_0 \subset V$, an open neighborhood $U_0 \subset U$ of $h$, and a mapping $\hat{\phi} \in C_b^k(\mathbb{R}^m; H)$ with $\phi|_{V_0} = \hat{\phi}|_{V_0}$ such that $\phi|_{V_0} : V_0 \rightarrow U_0 \cap \mathcal{M}$ is a parametrization of $\mathcal{M}$ around $h$, too.
\end{proposition}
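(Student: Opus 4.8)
The plan is to localize the given parametrization near $h$, multiply it by a smooth cutoff function, and extend by zero outside a compact set, so that the resulting map is globally defined, bounded together with its derivatives, and agrees with $\phi$ on a small ball. First I would set $y_0 := \phi^{-1}(h) \in V$. Since $V \subset \mathbb{R}^m$ is open, I can pick a radius $r > 0$ with $\overline{B(y_0, 2r)} \subset V$, and define $V_0 := B(y_0, r)$, whose closure is contained in $V$.

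Next I would invoke a standard bump function $\chi \in C^\infty(\mathbb{R}^m; [0,1])$ with $\chi \equiv 1$ on $\overline{B(y_0, r)}$ and ${\rm supp}(\chi) \subset B(y_0, 2r)$, and set
\[
\hat{\phi}(y) :=
\begin{cases}
\chi(y)\phi(y), & y \in V, \\
0, & y \notin B(y_0, 2r).
\end{cases}
\]
Because $\chi$ vanishes outside $B(y_0,2r) \subset V$ while $\phi$ is of class $C^k$ on all of $V$, the two branches agree on the overlap and yield a well-defined $C^k$ map on $\mathbb{R}^m$. As ${\rm supp}(\hat{\phi})$ is contained in the compact set $\overline{B(y_0,2r)}$, on which $\phi$ and its derivatives up to order $k$ are bounded, and $\chi$ has bounded derivatives, the product $\chi \phi$ and all its derivatives up to order $k$ are bounded; hence $\hat{\phi} \in C_b^k(\mathbb{R}^m; H)$. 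On $V_0$ we have $\chi \equiv 1$, so $\hat{\phi}|_{V_0} = \phi|_{V_0}$.

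It then remains to exhibit the open neighborhood $U_0$ of $h$ and to check that $\phi|_{V_0} : V_0 \rightarrow U_0 \cap \mathcal{M}$ is again a parametrization. The \emph{main obstacle} is the topological step of producing an open set $U_0 \subset H$ with $U_0 \cap \mathcal{M} = \phi(V_0)$: this is where the homeomorphism property of $\phi$ (rather than mere continuous bijectivity) is essential. Since $\phi : V \rightarrow U \cap \mathcal{M}$ is a homeomorphism and $V_0$ is open in $V$, the image $\phi(V_0)$ is open in the subspace $U \cap \mathcal{M}$, so there is an open $\tilde{U} \subset H$ with $\tilde{U} \cap (U \cap \mathcal{M}) = \phi(V_0)$. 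Setting $U_0 := \tilde{U} \cap U$ gives an open set with $U_0 \cap \mathcal{M} = \phi(V_0)$ and $h = \phi(y_0) \in U_0$.

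Finally I would conclude: $\phi|_{V_0} : V_0 \rightarrow U_0 \cap \mathcal{M} = \phi(V_0)$ is a homeomorphism, being the restriction of the homeomorphism $\phi$ to the open subset $V_0$ and its image, and the injectivity of $D\phi(y)$ holds for every $y \in V_0 \subset V$ because it holds on all of $V$. Thus $\phi|_{V_0}$ satisfies both conditions in the definition of a parametrization around $h$, which completes the argument. The only genuinely non-routine point is the identity $U_0 \cap \mathcal{M} = \phi(V_0)$; everything else is the standard cutoff construction and bookkeeping of derivative bounds on a compact support.
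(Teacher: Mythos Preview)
Your argument is correct and complete. The paper itself does not give a proof of this proposition; it simply refers the reader to \cite[Remark~6.1.1]{fillnm}, so your write-up in fact supplies what the paper omits. The construction you use --- shrinking to a ball $V_0$ with $\overline{B(y_0,2r)}\subset V$, multiplying by a smooth cutoff supported in $B(y_0,2r)$, and extending by zero --- is the standard device, and your handling of the one genuinely non-automatic point (producing an open $U_0\subset H$ with $U_0\cap\mathcal{M}=\phi(V_0)$ via the homeomorphism property of $\phi$ and the subspace topology) is exactly right.
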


\begin{proof}
See \cite[Remark~6.1.1]{fillnm}.
\end{proof}

\begin{remark}\label{rem-ext-para}
By Proposition~\ref{prop-ext-para} we may assume that any parametrization $\phi : V \rightarrow U \cap \mathcal{M}$ has an extension $\phi \in C_b^k(\mathbb{R}^m; H)$.
\end{remark}

\begin{proposition}\label{prop-conv-para}
Let $D \subset H$ be a dense subset. For every $h_0 \in \mathcal{M}$ there exist $\zeta_1, \ldots, \zeta_m \in D$ and a parametrization $\phi : V \rightarrow U \cap \mathcal{M}$ around $h_0$ such that
\begin{align*}
\phi(\langle \zeta, h \rangle) = h \quad \text{for all $h \in U \cap \mathcal{M}$,}
\end{align*}
where we use the notation $\langle \zeta, h \rangle := (\langle \zeta_1, h \rangle, \ldots, \langle \zeta_m, h \rangle) \in \mathbb{R}^m$.
\end{proposition}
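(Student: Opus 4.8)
The plan is to start from an arbitrary parametrization around $h_0$ and to replace it by a reparametrized version whose inverse is precisely the coordinate map $h \mapsto \langle \zeta, h \rangle$ for a suitable choice of $\zeta_1, \ldots, \zeta_m \in D$. First I would fix a parametrization $\psi : V_0 \rightarrow U_0 \cap \mathcal{M}$ of $\mathcal{M}$ around $h_0$ and set $y_0 := \psi^{-1}(h_0)$. Writing $e_1, \ldots, e_m$ for the standard basis of $\mathbb{R}^m$, the vectors $b_j := D\psi(y_0) e_j$, $j = 1, \ldots, m$, are linearly independent, because $D\psi(y_0)$ is injective, and hence form a basis of the tangent space $T_{h_0}\mathcal{M}$.

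The key step is to choose $\zeta_1, \ldots, \zeta_m \in D$ such that the matrix
\begin{align*}
G := \big( \langle \zeta_i, b_j \rangle \big)_{i,j=1,\ldots,m} \in \mathbb{R}^{m \times m}
\end{align*}
is invertible. To this end I first note that the choice $\eta_i := b_i$ produces the Gram matrix $( \langle b_i, b_j \rangle )_{i,j}$ of a linearly independent system, which is positive definite and hence invertible. Since the map $(\eta_1, \ldots, \eta_m) \mapsto \det\big( \langle \eta_i, b_j \rangle \big)_{i,j}$ is continuous on $H^m$ and does not vanish at $(b_1, \ldots, b_m)$, and since $D$ is dense in $H$, I can select $\zeta_i \in D$ with $\| \zeta_i - b_i \|$ so small that the corresponding determinant remains nonzero; this yields the required $\zeta_1, \ldots, \zeta_m \in D$.

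With these $\zeta_i$ fixed, I define $g : V_0 \rightarrow \mathbb{R}^m$ by $g(y) := \langle \zeta, \psi(y) \rangle$. Then $g$ inherits the regularity of $\psi$, and its derivative at $y_0$ is $Dg(y_0) v = \big( \langle \zeta_i, D\psi(y_0) v \rangle \big)_{i} = G v$ for $v \in \mathbb{R}^m$, which is invertible by construction. By the inverse function theorem there are open neighborhoods $V_1 \subset V_0$ of $y_0$ and $V \subset \mathbb{R}^m$ of $g(y_0) = \langle \zeta, h_0 \rangle$ such that $g : V_1 \rightarrow V$ is a diffeomorphism. I then set $\phi := \psi \circ g^{-1} : V \rightarrow \mathcal{M}$. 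Since $\psi(V_1)$ is open in $U_0 \cap \mathcal{M}$, there is an open set $U \subset U_0$ with $\psi(V_1) = U \cap \mathcal{M}$ and $h_0 \in U$; then $\phi : V \rightarrow U \cap \mathcal{M}$ inherits the regularity of $\psi$, is a composition of homeomorphisms and hence a homeomorphism, and $D\phi(y) = D\psi(g^{-1}(y)) \circ D(g^{-1})(y)$ is injective, so $\phi$ is a parametrization of $\mathcal{M}$ around $h_0$. Finally, for $h \in U \cap \mathcal{M} = \psi(V_1)$, writing $h = \psi(y)$ with $y \in V_1$ gives $\langle \zeta, h \rangle = g(y) \in V$ and
\begin{align*}
\phi\big( \langle \zeta, h \rangle \big) = \psi\big( g^{-1}(g(y)) \big) = \psi(y) = h,
\end{align*}
which is the claimed identity.

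The only genuinely nontrivial point is the key step, namely the existence of $\zeta_1, \ldots, \zeta_m$ inside the prescribed dense set $D$ making $G$ invertible; everything afterwards is a routine application of the inverse function theorem together with the homeomorphism property of parametrizations. Even this step is mild, since invertibility of $G$ is an open condition (nonvanishing of a determinant) and $D$ is dense, so a sufficiently accurate approximation of the manifestly admissible choice $\eta_i = b_i$ suffices.
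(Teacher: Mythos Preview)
Your argument is correct and follows the standard route: pick an arbitrary parametrization, choose $\zeta_1,\ldots,\zeta_m \in D$ by a density/determinant argument so that the induced coordinate map $g(y)=\langle \zeta,\psi(y)\rangle$ has invertible derivative at $y_0$, apply the inverse function theorem in $\mathbb{R}^m$, and reparametrize via $\phi=\psi\circ g^{-1}$. The paper does not spell out its own proof but refers to \cite[Proposition~6.1.2]{fillnm}, where the argument is essentially the same as yours.
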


\begin{proof}
See \cite[Proposition~6.1.2]{fillnm}.
\end{proof}

\begin{proposition}\label{prop-proj-tang}
Let $\phi : V \rightarrow U \cap \mathcal{M}$ be a parametrization as in Proposition~\ref{prop-conv-para}. Then the following statements are true:
\begin{enumerate}
\item The elements $\zeta_1, \ldots, \zeta_m$ are linearly independent in $H$.

\item For every $h \in U \cap \mathcal{M}$ we have the direct sum decomposition
\begin{align}\label{decomp}
H = T_h \mathcal{M} \oplus \langle \zeta_1, \ldots, \zeta_m \rangle^{\perp}.
\end{align}

\item For every $h \in U \cap \mathcal{M}$ the mapping
\begin{align*}
\Pi_h = D\phi(y)(\langle \zeta, \bullet \rangle ) : H \rightarrow T_h \mathcal{M}, \quad \text{where $y = \langle \zeta, h \rangle$,}
\end{align*}
is the corresponding projection according to (\ref{decomp}) from $H$ onto $T_h \mathcal{M}$, that is, we have 
\begin{align*}
\Pi_h \in L(H), \quad \Pi_h^2 = \Pi_h, \quad {\rm ran}(\Pi_h) = T_h \mathcal{M} \quad \text{and} \quad {\rm ker}(\Pi_h) = \langle \zeta_1, \ldots, \zeta_m \rangle^{\perp}. 
\end{align*}
\end{enumerate}
\end{proposition}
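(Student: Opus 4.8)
The whole argument rests on exploiting the relation $\phi(\langle \zeta, h \rangle) = h$ from Proposition~\ref{prop-conv-para} at the level of derivatives. The plan is to first introduce the bounded linear map
\[
\ell : H \to \mathbb{R}^m, \quad \ell(x) := \langle \zeta, x \rangle = (\langle \zeta_1, x \rangle, \ldots, \langle \zeta_m, x \rangle),
\]
and to record that $\ker \ell = \langle \zeta_1, \ldots, \zeta_m \rangle^{\perp}$ directly from the definition of the orthogonal complement. With $y = \ell(h) = \phi^{-1}(h)$, the defining relation reads $\phi \circ \ell = \mathrm{id}$ on $U \cap \mathcal{M}$. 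Since $\phi$ is a homeomorphism onto $U \cap \mathcal{M}$, hence injective, substituting $h = \phi(v)$ for $v \in V$ and cancelling $\phi$ upgrades this to the cleaner identity $\ell \circ \phi = \mathrm{id}_V$ on the \emph{open} set $V \subset \mathbb{R}^m$.

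First I would differentiate this last identity. As $\phi \in C^1(V;H)$ and $\ell$ is linear and continuous (so $D\ell = \ell$), the chain rule applied at $y$ yields the key \emph{biorthogonality} relation
\[
\ell \circ D\phi(y) = \mathrm{id}_{\mathbb{R}^m}, \quad \text{equivalently} \quad \langle \zeta_i, D\phi(y) e_j \rangle = \delta_{ij} \quad (1 \le i, j \le m),
\]
where $(e_j)$ is the standard basis of $\mathbb{R}^m$ and the vectors $v_j := D\phi(y) e_j$ form a basis of $T_h \mathcal{M} = D\phi(y)(\mathbb{R}^m)$ by injectivity of $D\phi(y)$. Statement~(1) is then immediate: if $\sum_i c_i \zeta_i = 0$, pairing with $v_j$ and invoking the biorthogonality gives $c_j = 0$ for every $j$.

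For statement~(3) I would set $\Pi_h := D\phi(y) \circ \ell$, which is precisely $D\phi(y)(\langle \zeta, \bullet \rangle)$ and lies in $L(H)$ as a composition of the bounded map $\ell$ with the (automatically bounded, finite-rank) map $D\phi(y)$. Idempotency is the one place where the key identity does the work:
\[
\Pi_h^2 = D\phi(y) \circ (\ell \circ D\phi(y)) \circ \ell = D\phi(y) \circ \mathrm{id}_{\mathbb{R}^m} \circ \ell = \Pi_h.
\]
Since $\ell$ is surjective (again by the key identity), $\mathrm{ran}(\Pi_h) = D\phi(y)(\mathbb{R}^m) = T_h \mathcal{M}$; and since $D\phi(y)$ is injective, $\Pi_h x = 0$ forces $\ell(x) = 0$, whence $\ker(\Pi_h) = \ker \ell = \langle \zeta_1, \ldots, \zeta_m \rangle^{\perp}$.

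Finally, statement~(2) follows formally from~(3): any bounded idempotent $\Pi_h$ induces the decomposition $H = \mathrm{ran}(\Pi_h) \oplus \ker(\Pi_h)$ through $x = \Pi_h x + (x - \Pi_h x)$, and substituting the range and kernel just computed gives $H = T_h \mathcal{M} \oplus \langle \zeta_1, \ldots, \zeta_m \rangle^{\perp}$, with $\Pi_h$ the associated projection onto $T_h \mathcal{M}$. The only genuinely delicate point is the very first step: the relation of Proposition~\ref{prop-conv-para} holds a priori only on $U \cap \mathcal{M}$, which is not open in $H$, so one must pass to $\ell \circ \phi = \mathrm{id}_V$ on the open parameter domain before differentiating; differentiating the original relation directly in $H$ would not be legitimate.
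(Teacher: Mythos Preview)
Your argument is correct. The paper itself does not give a proof of this proposition but simply refers to \cite[Lemma~6.1.3]{fillnm}, so there is nothing to compare directly; nonetheless, your route via the identity $\ell \circ \phi = \mathrm{id}_V$ on the open parameter domain, its differentiation to obtain the biorthogonality relation $\ell \circ D\phi(y) = \mathrm{id}_{\mathbb{R}^m}$, and the subsequent deduction of (1), (3), and then (2) from the idempotent decomposition is exactly the natural proof one would expect, and your remark about why one must pass to the open set $V$ before differentiating is a genuinely useful clarification.
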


\begin{proof}
See \cite[Lemma~6.1.3]{fillnm}.
\end{proof}

From now on, we assume that $\mathcal{M}$ is a $m$-dimensional $C^2$-submanifold of $H$.

\begin{proposition}\label{prop-zerlegung}
Let $\phi : V \rightarrow U \cap \mathcal{M}$ be a parametrization as in Proposition~\ref{prop-conv-para}. Furthermore, let $\sigma \in C^1(H)$ be a mapping such that
\begin{align}\label{sigma-pre-tangent}
\sigma(h) \in T_h \mathcal{M} \quad \text{for all $h \in U \cap \mathcal{M}$.}
\end{align}
Then, for every $h \in U \cap \mathcal{M}$ the direct sum decomposition of $D \sigma(h) \sigma(h)$ according to (\ref{decomp}) is given by
\begin{align}\label{decomp-element}
D \sigma(h) \sigma(h) = D\phi(y) ( \langle \zeta, D \sigma(h) \sigma(h) \rangle ) + D^2 \phi(y) ( \langle \zeta, \sigma(h) \rangle, \langle \zeta, \sigma(h) \rangle ),
\end{align}
where $y = \phi^{-1}(h)$.
\end{proposition}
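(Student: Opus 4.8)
The plan is to verify separately that the right-hand side of (\ref{decomp-element}) indeed sums to $D\sigma(h)\sigma(h)$ and that its two summands sit in the two components of the decomposition (\ref{decomp}). Throughout I abbreviate $a := \langle \zeta, \sigma(h) \rangle \in \mathbb{R}^m$ and $y = \phi^{-1}(h)$, and I will use repeatedly that, by Proposition~\ref{prop-conv-para} together with the bijectivity of $\phi$, the chart $\langle \zeta, \bullet \rangle$ restricted to $U \cap \mathcal{M}$ coincides with $\phi^{-1}$; in particular $\langle \zeta, \phi(z) \rangle = z$ for all $z \in V$.

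First I would settle the two membership claims. The summand $D\phi(y)(\langle \zeta, D\sigma(h)\sigma(h) \rangle)$ lies in $D\phi(y)(\mathbb{R}^m) = T_h\mathcal{M}$ by Definition~\ref{def-tang-raum}, so nothing is to prove there. For the other summand I would differentiate the identity $\langle \zeta, \phi(z) \rangle = z$ twice: differentiating once in a direction $w \in \mathbb{R}^m$ gives $\langle \zeta_j, D\phi(z) w \rangle = w_j$ for each $j$, and differentiating this again in a direction $u \in \mathbb{R}^m$ yields $\langle \zeta_j, D^2\phi(z)(u,w) \rangle = 0$. Hence $D^2\phi(z)(u,w) \in \langle \zeta_1, \ldots, \zeta_m \rangle^{\perp}$ for all $u,w$, and in particular the summand $D^2\phi(y)(a,a)$ belongs to $\langle \zeta_1, \ldots, \zeta_m \rangle^{\perp}$. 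This is exactly what is needed for (\ref{decomp-element}) to be the decomposition according to (\ref{decomp}).

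The main computation is to show that the two summands add up to $D\sigma(h)\sigma(h)$. Since $\sigma(h) \in T_h\mathcal{M}$, Proposition~\ref{prop-proj-tang}(3) gives $\sigma(h) = \Pi_h \sigma(h) = D\phi(y)a$, so that $\gamma(t) := \phi(y + ta)$ defines, for small $t$, a $C^2$-curve in $U \cap \mathcal{M}$ with $\gamma(0) = h$ and $\gamma'(0) = D\phi(y)a = \sigma(h)$. The chain rule then yields $D\sigma(h)\sigma(h) = \frac{d}{dt}\big|_{t=0} \sigma(\gamma(t))$. Along this curve $\sigma(\gamma(t)) \in T_{\gamma(t)}\mathcal{M}$, so the projection identity of Proposition~\ref{prop-proj-tang}(3) applies pointwise and gives $\sigma(\gamma(t)) = D\phi(y+ta)\, g(t)$, where $g(t) := \langle \zeta, \sigma(\gamma(t)) \rangle$ is of class $C^1$ with $g(0) = a$. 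Differentiating this product and evaluating at $t = 0$ produces
\begin{align*}
D\sigma(h)\sigma(h) = D^2\phi(y)(a, g(0)) + D\phi(y)\, g'(0) = D^2\phi(y)(a,a) + D\phi(y)\, g'(0).
\end{align*}
It remains only to identify $g'(0)$: by the chain rule $g'(0) = \langle \zeta, D\sigma(h)\,\gamma'(0) \rangle = \langle \zeta, D\sigma(h)\sigma(h) \rangle$, which turns the displayed line into exactly (\ref{decomp-element}).

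The step that requires the most care is the representation $\sigma(\gamma(t)) = D\phi(y+ta)\, g(t)$ along the whole curve: it rests on knowing that $\gamma(t)$ stays in $U \cap \mathcal{M}$ for small $t$ (so that $\sigma(\gamma(t))$ is genuinely tangent and the projection $\Pi_{\gamma(t)}$ of Proposition~\ref{prop-proj-tang} is available), and on the regularity $\phi \in C^2$, $\sigma \in C^1$, which guarantees that $g$ is differentiable. Once these are in place, the remainder is a routine application of the chain and product rules together with the second-order identity $\langle \zeta_j, D^2\phi(z)(\bullet,\bullet)\rangle = 0$ established above.
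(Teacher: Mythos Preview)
Your proof is correct and follows essentially the same approach as the paper: both define the curve $\gamma(t)=\phi(y+ta)$ (the paper writes the direction as $D\phi(y)^{-1}\sigma(h)$, which equals your $a=\langle\zeta,\sigma(h)\rangle$ by Proposition~\ref{prop-proj-tang}) and differentiate the projection identity $\sigma(\gamma(t))=D\phi(\langle\zeta,\gamma(t)\rangle)\,\langle\zeta,\sigma(\gamma(t))\rangle$ at $t=0$ via the chain and product rules. Your explicit verification that $D^2\phi(y)(a,a)\in\langle\zeta_1,\ldots,\zeta_m\rangle^{\perp}$, obtained by differentiating $\langle\zeta,\phi(z)\rangle=z$ twice, is a detail the paper leaves implicit.
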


\begin{proof}
Since $V$ is an open subset of $\mathbb{R}^m$, there exists $\epsilon > 0$ such that
\begin{align*}
y + t D\phi(y)^{-1} \sigma(h) \in V \quad \text{for all $t \in (-\epsilon, \epsilon)$.}
\end{align*}
Therefore, the curve
\begin{align*}
c : (-\epsilon, \epsilon) \rightarrow U \cap \mathcal{M}, \quad c(t) := \phi(y + t D\phi(y)^{-1} \sigma(h))
\end{align*}
is well-defined, and we have $c \in C^1((-\epsilon, \epsilon); H)$ with $c(0) = h$ and $c'(0) = \sigma(h)$. Hence, we have
\begin{align*}
\frac{d}{dt} \sigma(c(t))|_{t=0} = D\sigma(h)\sigma(h).
\end{align*}
Moreover, by condition (\ref{sigma-pre-tangent}) and Proposition~\ref{prop-proj-tang} we have
\begin{align*}
\frac{d}{dt} \sigma(c(t))|_{t=0} &= \frac{d}{dt} \Pi_{c(t)} \sigma(c(t))|_{t=0} = \frac{d}{dt} D \phi( \langle \zeta, c(t) \rangle ) ( \langle \zeta, \sigma(c(t)) \rangle )|_{t=0}
\\ &= D\phi(y) ( \langle \zeta, D \sigma(h) \sigma(h) \rangle ) + D^2 \phi(y) ( \langle \zeta, \sigma(h) \rangle, \langle \zeta, \sigma(h) \rangle ).
\end{align*}
The latter two identities prove the desired decomposition (\ref{decomp-element}).
\end{proof}

After these preliminaries, we shall study invariant manifolds for time-homogeneous SPDEs of the form
\begin{align}\label{SPDE-time}
\left\{
\begin{array}{rcl}
dX_t & = & (AX_t + \alpha(X_t))dt + \sigma(X_t) dW_t
\medskip
\\ X_0 & = & h_0
\end{array}
\right.
\end{align}
with measurable mappings $\alpha : H \rightarrow H$ and $\sigma : H \rightarrow L_2^0(H)$. As in the previous sections, the operator $A$ is the infinitesimal generator of a $C_0$-semigroup $(S_t)_{t \geq 0}$ on $H$. Note that, by (\ref{series-integral}), the SPDE (\ref{SPDE-time}) can be rewritten equivalently as
\begin{align}\label{SPDE-manifolds}
\left\{
\begin{array}{rcl}
dX_t & = & ( A X_t + \alpha(X_t) ) dt + \sum_{j \in \mathbb{N}} \sigma^j(X_t) d \beta_t^j \medskip
\\ X_0 & = & h_0,
\end{array}
\right.
\end{align}
where $(\beta^j)_{j \in \mathbb{N}}$ denotes the sequence of real-valued independent standard Wiener processes defined in (\ref{Wiener-series}), and where the mappings $\sigma^j : H \rightarrow H$, $j \in \mathbb{N}$ are given by $\sigma^j = \sqrt{\lambda_j} \sigma e_j$.

For the rest of this section, we assume that there exist a constant $L \geq 0$ such that
\begin{align}\label{alpha-Lipschitz-time}
\| \alpha(h_1) - \alpha(h_2) \| \leq L \| h_1 - h_2 \|, \quad h_1, h_2 \in H
\end{align}
and a sequence $(\kappa_j)_{j \in \mathbb{N}} \subset \mathbb{R}_+$ with $\sum_{j \in \mathbb{N}} \kappa_j^2 < \infty$ such that for every $j \in \mathbb{N}$ we have
\begin{align}\label{sigma-Lipschitz-time}
\| \sigma^j(h_1) - \sigma^j(h_2) \| &\leq \kappa_j \| h_1 - h_2 \|, \quad h_1, h_2 \in H
\\ \label{sigma-lin-growth-time} \| \sigma^j(h) \| &\leq \kappa_j ( 1 + \| h \| ), \quad h \in H.
\end{align}

\begin{proposition}\label{prop-Loesung-konst}
For every $h_0 \in H$ there exists a (up to indistinguishability) unique weak solution to (\ref{SPDE-manifolds}).
\end{proposition}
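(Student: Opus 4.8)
The plan is to reduce the claim to the existence and uniqueness results already established for the general SPDE~(\ref{SPDE}), namely Theorem~\ref{thm-existenz} and Corollary~\ref{cor-existence}. The point is that (\ref{SPDE-manifolds}) is precisely the time-homogeneous instance of (\ref{SPDE}) with the time-independent coefficients $\alpha(t,h) = \alpha(h)$ and $\sigma(t,h) = \sigma(h)$; by the series expansion (\ref{series-integral}), a process is a weak solution to (\ref{SPDE-manifolds}) if and only if it is a weak solution to (\ref{SPDE}) for these coefficients, an equivalence already noted in the text. Hence it suffices to verify that the standing assumptions (\ref{alpha-Lipschitz-time})--(\ref{sigma-lin-growth-time}) imply the global Lipschitz and linear growth conditions (\ref{Lipschitz-alpha})--(\ref{lin-growth-sigma}).

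The key computation links the coordinatewise data $\sigma^j = \sqrt{\lambda_j}\,\sigma e_j$ to the Hilbert--Schmidt norm on $L_2^0(H)$. From the definition of $\| \cdot \|_{L_2^0(H)}$ one has, for every $h \in H$,
\begin{align*}
\| \sigma(h) \|_{L_2^0(H)}^2 = \sum_{j \in \mathbb{N}} \lambda_j \| \sigma(h) e_j \|^2 = \sum_{j \in \mathbb{N}} \| \sigma^j(h) \|^2.
\end{align*}
Feeding the per-coordinate estimates into this identity, the Lipschitz bound (\ref{sigma-Lipschitz-time}) gives
\begin{align*}
\| \sigma(h_1) - \sigma(h_2) \|_{L_2^0(H)}^2 = \sum_{j \in \mathbb{N}} \| \sigma^j(h_1) - \sigma^j(h_2) \|^2 \leq \Big( \sum_{j \in \mathbb{N}} \kappa_j^2 \Big) \| h_1 - h_2 \|^2,
\end{align*}
and the growth bound (\ref{sigma-lin-growth-time}) gives $\| \sigma(h) \|_{L_2^0(H)}^2 \leq ( \sum_{j \in \mathbb{N}} \kappa_j^2 )(1 + \| h \|)^2$. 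Since $\sum_{j \in \mathbb{N}} \kappa_j^2 < \infty$, both (\ref{Lipschitz-sigma}) and (\ref{lin-growth-sigma}) hold with the constant $\tilde{L} := ( \sum_{j \in \mathbb{N}} \kappa_j^2 )^{1/2}$; this also shows $\sigma(h) \in L_2^0(H)$, so that $\sigma$ is well-defined as a map into $L_2^0(H)$. For the drift, (\ref{Lipschitz-alpha}) is nothing but the assumption (\ref{alpha-Lipschitz-time}), while the linear growth (\ref{lin-growth-alpha}) follows from the triangle inequality via $\| \alpha(h) \| \leq \| \alpha(0) \| + L \| h \|$. Taking a single constant $K := \max\{ L, \| \alpha(0) \|, \tilde{L} \}$, all four conditions (\ref{Lipschitz-alpha})--(\ref{lin-growth-sigma}) are in force, the dependence ``for all $t \geq 0$'' being vacuous because the coefficients do not depend on time.

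With these conditions verified, I would finish by invoking Corollary~\ref{cor-existence}. Since $h_0 \in H$ is a fixed deterministic element, it is in particular an $\mathcal{F}_0$-measurable random variable with $\mathbb{E}[ \| h_0 \|^{2p} ] = \| h_0 \|^{2p} < \infty$ for every $p > 1$, so the moment hypothesis of the corollary is trivially met. Corollary~\ref{cor-existence} then yields an (up to indistinguishability) unique weak solution to (\ref{SPDE}) with these coefficients, which by the equivalence noted above is exactly the asserted unique weak solution to (\ref{SPDE-manifolds}). I do not expect a genuine obstacle: the entire argument is a translation of the coordinatewise hypotheses on the $\sigma^j$ into Hilbert--Schmidt-norm hypotheses on $\sigma$, and the only step requiring care is using the summability $\sum_{j \in \mathbb{N}} \kappa_j^2 < \infty$ consistently, so that a single constant controls both the Lipschitz and the growth behaviour of $\sigma$.
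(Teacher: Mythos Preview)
Your proposal is correct and follows essentially the same route as the paper's own proof: verify that the coordinatewise assumptions (\ref{alpha-Lipschitz-time})--(\ref{sigma-lin-growth-time}) imply the global conditions (\ref{Lipschitz-alpha})--(\ref{lin-growth-sigma}) via the identity $\|\sigma(h)\|_{L_2^0(H)}^2 = \sum_{j} \|\sigma^j(h)\|^2$ together with $\sum_j \kappa_j^2 < \infty$, derive linear growth of $\alpha$ from its Lipschitz bound, and then invoke Corollary~\ref{cor-existence}. Your added remarks---that $\sigma$ lands in $L_2^0(H)$ and that a deterministic $h_0$ trivially satisfies the moment hypothesis---are useful clarifications absent from the paper's terser argument.
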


\begin{proof}
By (\ref{sigma-Lipschitz-time}), for all $h_1, h_2 \in H$ we have
\begin{align*}
\| \sigma(h_1) - \sigma(h_2) \|_{L_2^0(H)} = \bigg( \sum_{j \in \mathbb{N}} \| \sigma^j(h_1) - \sigma^j(h_2) \|^2 \bigg)^{1/2}
\leq \bigg( \sum_{j \in \mathbb{N}} \kappa_j^2 \bigg)^{1/2} \| h_1 - h_2 \|.
\end{align*}
Moreover, by (\ref{alpha-Lipschitz-time}), for every $h \in H$ we have
\begin{align*}
\| \alpha(h) \| \leq \| \alpha(h) - \alpha(0) \| + \| \alpha(0) \| \leq L \| h \| + \| \alpha(0) \| \leq \max \{ L, \| \alpha(0) \| \} (1 + \| h \|),
\end{align*}
and, by (\ref{sigma-lin-growth-time}) we obtain
\begin{align*}
\| \sigma(h) \|_{L_2^0(H)} = \bigg( \sum_{j \in \mathbb{N}} \| \sigma^j(h) \|^2 \bigg)^{1/2} \leq \bigg( \sum_{j \in \mathbb{N}} \kappa_j^2 \bigg)^{1/2} (1 + \| h \|).
\end{align*}
Therefore, conditions (\ref{Lipschitz-alpha})--(\ref{lin-growth-sigma}) are fulfilled, and hence, applying Corollary~\ref{cor-existence} completes the proof.
\end{proof}

Recall that $\mathcal{M}$ denotes a finite dimensional $C^2$-submanifold of $H$.

\begin{definition}
The submanifold $\mathcal{M}$ is called \emph{locally invariant} for (\ref{SPDE-manifolds}), if for every $h_0 \in \mathcal{M}$ there exists a local weak solution $X$ to (\ref{SPDE-manifolds}) with some lifetime $\tau > 0$ such that
\begin{align*}
X_{t \wedge \tau} \in \mathcal{M} \quad \text{for all $t \geq 0$,} \quad \text{$\mathbb{P}$--almost surely.}
\end{align*}
\end{definition}

In order to investigate local invariance of $\mathcal{M}$, we will assume, from now on, that $\sigma^j \in C^1(H)$ for all $j \in \mathbb{N}$.

\begin{lemma}\label{lemma-Damir}
The following statements are true:
\begin{enumerate}
\item For every $h \in H$ we have 
\begin{align}\label{sum-Damir-finite}
\sum_{j \in \mathbb{N}} \| D \sigma^j(h) \sigma^j(h) \| < \infty.
\end{align}

\item The mapping
\begin{align}\label{mapping-Damir}
H \rightarrow H, \quad h \mapsto \sum_{j \in \mathbb{N}} D \sigma^j(h) \sigma^j(h)
\end{align}
is continuous.
\end{enumerate}
\end{lemma}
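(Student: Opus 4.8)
The plan is to reduce both statements to the single pointwise estimate $\| D\sigma^j(h)\sigma^j(h) \| \le \kappa_j^2(1 + \| h \|)$, and then to deduce continuity of the infinite sum via uniform convergence on bounded sets.

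First I would observe that the Lipschitz bound (\ref{sigma-Lipschitz-time}) controls the operator norm of the derivative. Since $\sigma^j \in C^1(H)$, for every $h, v \in H$ we have $D\sigma^j(h)v = \lim_{t\to 0} t^{-1}(\sigma^j(h+tv) - \sigma^j(h))$, and the Lipschitz estimate gives $\| \sigma^j(h+tv) - \sigma^j(h) \| \le \kappa_j |t|\, \| v \|$; passing to the limit yields $\| D\sigma^j(h) \|_{L(H)} \le \kappa_j$ for all $h \in H$. Combining this with the linear growth condition (\ref{sigma-lin-growth-time}) gives
\begin{align*}
\| D\sigma^j(h)\sigma^j(h) \| \le \| D\sigma^j(h) \|_{L(H)} \, \| \sigma^j(h) \| \le \kappa_j^2 (1 + \| h \|).
\end{align*}
Summing over $j$ and using $\sum_{j\in\mathbb{N}} \kappa_j^2 < \infty$ immediately yields (\ref{sum-Damir-finite}), proving the first statement.

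For the second statement, I would first note that each summand $h \mapsto D\sigma^j(h)\sigma^j(h)$ is continuous, being the product of the continuous map $h \mapsto D\sigma^j(h)$ (continuity of the derivative is part of $\sigma^j \in C^1(H)$) with the continuous map $\sigma^j$. To transfer continuity to the infinite sum, I would invoke the Weierstrass $M$-test on balls: on the closed ball $\{ \| h \| \le R \}$ the estimate above gives the uniform bound $\| D\sigma^j(h)\sigma^j(h) \| \le \kappa_j^2 (1 + R)$, whose sum over $j$ is finite. Hence the series $\sum_{j \in \mathbb{N}} D\sigma^j(h)\sigma^j(h)$ converges uniformly on $\{ \| h \| \le R \}$, and a uniform limit of continuous functions is continuous there. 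Since $R > 0$ is arbitrary and continuity is a local property, the map (\ref{mapping-Damir}) is continuous on all of $H$.

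The only mildly delicate point is the first step, namely deriving the operator-norm bound $\| D\sigma^j(h) \|_{L(H)} \le \kappa_j$ from the Lipschitz constant; everything after that is a routine $M$-test argument, and I do not expect any genuine obstacle.
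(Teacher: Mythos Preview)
Your proof is correct and follows essentially the same approach as the paper: both arguments rest on the pointwise estimate $\| D\sigma^j(h)\sigma^j(h) \| \le \kappa_j^2(1+\|h\|)$ (you are in fact more careful than the paper in deriving $\|D\sigma^j(h)\|_{L(H)}\le\kappa_j$ from the Lipschitz condition), and both establish continuity of each summand and then pass to the limit. The only cosmetic difference is that the paper phrases the last step as Lebesgue's dominated convergence theorem with respect to the counting measure on $\mathbb{N}$, whereas you use the equivalent and slightly more elementary Weierstrass $M$-test on balls; either route works without issue.
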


\begin{proof}
By (\ref{sigma-Lipschitz-time}) and (\ref{sigma-lin-growth-time}), for every $h \in H$ we have
\begin{align*}
\sum_{j \in \mathbb{N}} \| D \sigma^j(h) \sigma^j(h) \| \leq \sum_{j \in \mathbb{N}} \| D \sigma^j(h) \| \, \| \sigma^j(h) \| \leq (1 + \| h \|) \sum_{j \in \mathbb{N}} \kappa_j^2 < \infty,
\end{align*}
showing (\ref{sum-Damir-finite}). Moreover, for every $j \in \mathbb{N}$ the mapping
\begin{align*}
H \mapsto H, \quad D \sigma^j(h) \sigma^j(h)
\end{align*}
is continuous, because for all $h_1, h_2 \in H$ we have
\begin{align*}
&\| D \sigma^j(h_1) \sigma^j(h_1) - D \sigma^j(h_2) \sigma^j(h_2) \|
\\ &\leq \| D \sigma^j(h_1) \sigma^j(h_1) - D \sigma^j(h_1) \sigma^j(h_2) \| + \| D \sigma^j(h_1) \sigma^j(h_2) - D \sigma^j(h_2) \sigma^j(h_2) \|
\\ &\leq \| D \sigma^j(h_1) \| \, \| \sigma^j(h_1) - \sigma^j(h_2) \| + \| D \sigma^j(h_1) - D \sigma^j(h_2) \| \, \| \sigma^j(h_2) \|.
\end{align*}
Let $\nu$ be the counting measure on $(\mathbb{N}, \mathfrak{P}(\mathbb{N}))$, which is given by $\nu(\{j\}) = 1$ for all $j \in \mathbb{N}$. Then we have
\begin{align*}
\sum_{j \in \mathbb{N}} D \sigma^j(h) \sigma^j(h) = \int_{\mathbb{N}} D \sigma^j(h) \sigma^j(h) \nu(dj).
\end{align*}
Hence, because of the estimate
\begin{align*}
\| D \sigma^j(h) \sigma^j(h) \| \leq (1+ \| h \|) \kappa_j^2, \quad h \in H \text{ and } j \in \mathbb{N}
\end{align*}
the continuity of the mapping (\ref{mapping-Damir}) is a consequence of Lebesgue's dominated convergence theorem.
\end{proof}

For a mapping $\phi \in C_b^2(\mathbb{R}^m; H)$ and elements $\zeta_1, \ldots, \zeta_m \in \mathcal{D}(A^*)$ we define the mappings $\alpha_{\phi, \zeta} : \mathbb{R}^m \rightarrow \mathbb{R}^m$ and $\sigma_{\phi, \zeta}^j : \mathbb{R}^m \rightarrow \mathbb{R}^m$, $j \in \mathbb{N}$ as
\begin{align*}
\alpha_{\phi, \zeta}(y) &:= \langle A^* \zeta, \phi(y) \rangle + \langle \zeta, \alpha(\phi(y)) \rangle,
\\ \sigma_{\phi, \zeta}^j(y) &:= \langle \zeta, \sigma^j(\phi(y)) \rangle.
\end{align*}

\begin{proposition}
Let $\phi \in C_b^2(\mathbb{R}^m; H)$ and $\zeta_1, \ldots, \zeta_m \in \mathcal{D}(A^*)$ be arbitrary. Then, for every $y_0 \in \mathbb{R}^m$ there exists a (up to indistinguishability) unique strong solution to the SDE
\begin{align}\label{SDE-Y}
\left\{
\begin{array}{rcl}
dY_t & = & \alpha_{\phi, \zeta}(Y_t) dt + \sum_{j \in \mathbb{N}} \sigma_{\phi, \zeta}^j(Y_t) d\beta_t^j \medskip
\\ Y_0 & = & y_0.
\end{array}
\right.
\end{align}
\end{proposition}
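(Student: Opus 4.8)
The plan is to recognize the SDE (\ref{SDE-Y}) as a particular instance of the SPDE (\ref{SPDE}) on the finite dimensional state space $\mathbb{R}^m$ with vanishing generator, and then to invoke the existence- and uniqueness theory developed in Section~\ref{sec-existence}. Concretely, I would take $H = \mathbb{R}^m$ equipped with the Euclidean inner product and $A = 0$ with $\mathcal{D}(A) = \mathbb{R}^m$, so that the associated semigroup is $S_t = {\rm Id}$ for all $t \geq 0$; this semigroup is norm continuous and satisfies $A^* = 0$. The drift is the time-homogeneous map $\alpha_{\phi, \zeta}$, and the dispersion is the map $\sigma_{\phi, \zeta} : \mathbb{R}^m \rightarrow L_2^0(\mathbb{R}^m)$ whose components in the sense of (\ref{series-integral}) are prescribed by $\sqrt{\lambda_j}\, \sigma_{\phi, \zeta}(y) e_j = \sigma_{\phi, \zeta}^j(y)$ for $j \in \mathbb{N}$. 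With this identification, and using the series expansion (\ref{series-integral}), equation (\ref{SDE-Y}) is exactly the SPDE (\ref{SPDE}) for these data, so it remains to check the hypotheses of Theorem~\ref{thm-existenz}.

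The main step is therefore to verify the global Lipschitz conditions (\ref{Lipschitz-alpha})--(\ref{Lipschitz-sigma}) and the linear growth conditions (\ref{lin-growth-alpha})--(\ref{lin-growth-sigma}) for $\alpha_{\phi, \zeta}$ and $\sigma_{\phi, \zeta}$. The key structural facts I would exploit are that $\phi \in C_b^2(\mathbb{R}^m; H)$ is bounded and globally Lipschitz with constant $\| D\phi \|_\infty$, that each $\zeta_i \in \mathcal{D}(A^*)$ so that $A^* \zeta_i$ and $\zeta_i$ have finite norm, and that $\sum_{j \in \mathbb{N}} \kappa_j^2 < \infty$. For the drift, the Cauchy--Schwarz inequality together with (\ref{alpha-Lipschitz-time}) and the Lipschitz continuity of $\phi$ gives
\begin{align*}
\| \alpha_{\phi, \zeta}(y_1) - \alpha_{\phi, \zeta}(y_2) \| \leq \Big( \sum_{i=1}^m \big( \| A^* \zeta_i \| + L \| \zeta_i \| \big)^2 \Big)^{1/2} \| D\phi \|_\infty \| y_1 - y_2 \|,
\end{align*}
while boundedness of $\phi$ yields that $\alpha_{\phi, \zeta}$ is bounded, hence of linear growth. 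For the dispersion, using the Hilbert--Schmidt norm identity $\| \sigma_{\phi, \zeta}(y) \|_{L_2^0(\mathbb{R}^m)}^2 = \sum_{j \in \mathbb{N}} \| \sigma_{\phi, \zeta}^j(y) \|^2$ and estimating each term by Cauchy--Schwarz together with (\ref{sigma-Lipschitz-time}), I obtain
\begin{align*}
\| \sigma_{\phi, \zeta}(y_1) - \sigma_{\phi, \zeta}(y_2) \|_{L_2^0(\mathbb{R}^m)}^2 \leq \Big( \sum_{i=1}^m \| \zeta_i \|^2 \Big) \Big( \sum_{j \in \mathbb{N}} \kappa_j^2 \Big) \| D\phi \|_\infty^2 \| y_1 - y_2 \|^2,
\end{align*}
and analogously (\ref{sigma-lin-growth-time}) together with boundedness of $\phi$ shows that $\sigma_{\phi, \zeta}$ is bounded in $L_2^0(\mathbb{R}^m)$, hence of linear growth.

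Having verified all four conditions, Theorem~\ref{thm-existenz} provides a (up to indistinguishability) unique mild solution $Y$ to (\ref{SDE-Y}) for the (deterministic, hence $\mathcal{F}_0$-measurable) initial value $y_0$. Since the semigroup $S_t = {\rm Id}$ is norm continuous, Proposition~\ref{prop-SPDE-normstetig} shows that the notions of strong, weak and mild solution coincide in the present finite dimensional setting; consequently $Y$ is the unique strong solution to (\ref{SDE-Y}), which is the assertion. I expect the only genuinely delicate points to be bookkeeping ones: the correct passage between the scalar coefficients $\sigma_{\phi, \zeta}^j$ and the $L_2^0(\mathbb{R}^m)$-valued coefficient $\sigma_{\phi, \zeta}$ via (\ref{series-integral}) and the Hilbert--Schmidt norm, and the observation that the vanishing generator renders all three solution concepts equivalent, so that the mild solution furnished by Theorem~\ref{thm-existenz} is automatically strong.
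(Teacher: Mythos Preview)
Your proposal is correct and follows essentially the same strategy as the paper: verify Lipschitz and linear growth conditions for $\alpha_{\phi,\zeta}$ and the $\sigma_{\phi,\zeta}^j$ using $\phi \in C_b^2(\mathbb{R}^m;H)$ together with (\ref{alpha-Lipschitz-time})--(\ref{sigma-lin-growth-time}), invoke the existence theory of Section~\ref{sec-existence}, and then use Proposition~\ref{prop-SPDE-normstetig} in the finite dimensional setting to pass from mild (or weak) to strong solutions. The only cosmetic difference is that the paper checks the componentwise conditions and applies Proposition~\ref{prop-Loesung-konst} (which in turn rests on Corollary~\ref{cor-existence}), whereas you assemble the $L_2^0(\mathbb{R}^m)$-valued coefficient and apply Theorem~\ref{thm-existenz} directly; both routes amount to the same argument.
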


\begin{proof}
By virtue of the assumption $\phi \in C_b^2(\mathbb{R}^m; H)$ and (\ref{alpha-Lipschitz-time})--(\ref{sigma-lin-growth-time}), there exist a constant $\tilde{L} \geq 0$ such that
\begin{align*}
\| \alpha_{\phi, \zeta}(y_1) - \alpha_{\phi, \zeta}(y_2) \|_{\mathbb{R}^m} \leq \tilde{L} \| y_1 - y_2 \|_{\mathbb{R}^m}, \quad y_1, y_2 \in \mathbb{R}^m
\end{align*}
and a sequence $(\tilde{\kappa}_j)_{j \in \mathbb{N}} \subset \mathbb{R}_+$ with $\sum_{j \in \mathbb{N}} \tilde{\kappa}_j^2 < \infty$ such that for every $j \in \mathbb{N}$ we have
\begin{align*}
\| \sigma_{\phi, \zeta}^j(y_1) - \sigma_{\phi, \zeta}^j(y_2) \|_{\mathbb{R}^m} &\leq \tilde{\kappa}_j \| y_1 - y_2 \|_{\mathbb{R}^m}, \quad y_1, y_2 \in \mathbb{R}^m
\\ \| \sigma_{\phi, \zeta}^j(y) \|_{\mathbb{R}^m} &\leq \tilde{\kappa}_j ( 1 + \| y \|_{\mathbb{R}^m} ), \quad y \in \mathbb{R}^m.
\end{align*}
Therefore, by Proposition~\ref{prop-Loesung-konst}, for every $y_0 \in \mathbb{R}^m$ there exists a (up to indistinguishability) unique weak solution to (\ref{SDE-Y}), which, according to Proposition~\ref{prop-SPDE-normstetig} is also a strong solution to (\ref{SDE-Y}). The uniqueness of strong solutions to (\ref{SDE-Y}) is a consequence of Proposition~\ref{prop-SPDE-normstetig} and Theorem~\ref{thm-starke-Eind}.
\end{proof}

Now, we are ready to formulate and prove our main result of this section.

\begin{theorem}\label{thm-manifolds}
The following statements are equivalent:
\begin{enumerate}
\item The submanifold $\mathcal{M}$ is locally invariant for (\ref{SPDE-manifolds}).

\item We have
\begin{align}\label{domain}
&\mathcal{M} \subset \mathcal{D}(A),
\\ \label{sigma-tang} &\sigma^j(h) \in T_h \mathcal{M} \quad \text{for all $h \in \mathcal{M}$ and all $j \in \mathbb{N}$,}
\\ \label{alpha-tang} &Ah + \alpha(h) - \frac{1}{2} \sum_{j \in \mathbb{N}} D \sigma^j(h) \sigma^j(h) \in T_h \mathcal{M} \quad \text{for all $h \in \mathcal{M}$.}
\end{align}

\item The operator $A$ is continuous on $\mathcal{M}$, and for each $h_0 \in \mathcal{M}$ there exists a local strong solution $X$ to (\ref{SPDE-manifolds}) with some lifetime $\tau > 0$ such that
\begin{align*}
X_{t \wedge \tau} \in \mathcal{M} \quad \text{for all $t \geq 0$,} \quad \text{$\mathbb{P}$--almost surely.}
\end{align*}
\end{enumerate}
\end{theorem}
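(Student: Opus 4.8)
The plan is to prove the cycle of implications $(3) \Rightarrow (1) \Rightarrow (2) \Rightarrow (3)$, starting with the cheap ones. The implication $(3) \Rightarrow (1)$ is immediate: by Proposition~\ref{prop-stark-schwach} a local strong solution is in particular a local weak solution, and the property of staying on $\mathcal{M}$ transfers verbatim, so $\mathcal{M}$ is locally invariant.

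For $(2) \Rightarrow (3)$ I would fix $h_0 \in \mathcal{M}$ and, using Proposition~\ref{prop-conv-para} together with Remark~\ref{rem-ext-para}, choose $\zeta_1,\dots,\zeta_m \in \mathcal{D}(A^*)$ and a parametrization $\phi \in C_b^2(\mathbb{R}^m;H)$ around $h_0$ with $\phi(\langle\zeta,h\rangle)=h$ on $U\cap\mathcal{M}$. Solving the finite-dimensional SDE (\ref{SDE-Y}) for $Y$ with $Y_0=\langle\zeta,h_0\rangle$ and setting $X_t:=\phi(Y_t)$, I would apply It\^o's formula (Theorem~\ref{thm-Ito}) and check that $X$ is a local weak solution that stays on $\mathcal{M}$. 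The $d\beta^j$-coefficients match because $D\phi(y)\langle\zeta,\sigma^j(\phi(y))\rangle = \Pi_{\phi(y)}\sigma^j(\phi(y)) = \sigma^j(\phi(y))$ by the tangency hypothesis (\ref{sigma-tang}), while for the drift I would feed the decomposition (\ref{decomp-element}) from Proposition~\ref{prop-zerlegung} into the second-order term: the $D^2\phi$ contribution supplies exactly $\tfrac12\sum_j\big(D\sigma^j(h)\sigma^j(h) - \Pi_h D\sigma^j(h)\sigma^j(h)\big)$, and combining this with hypothesis (\ref{alpha-tang}) collapses the total drift, tested against any $\xi\in\mathcal{D}(A^*)$, to $\langle A^*\xi,X_s\rangle + \langle\xi,\alpha(X_s)\rangle$. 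To promote this weak solution to a strong one via Corollary~\ref{cor-strong-sol-manifold} I would separately check that $A$ is continuous on $\mathcal{M}$: solving (\ref{alpha-tang}) for $Ah$ gives $Ah = \Pi_h b(h) - \alpha(h) + \tfrac12\sum_j D\sigma^j(h)\sigma^j(h)$, where $b(h)$ is the left-hand side of (\ref{alpha-tang}), and $\Pi_h b(h) = D\phi(\langle\zeta,h\rangle)\langle\zeta,b(h)\rangle$ is continuous because $h\mapsto D\phi(\langle\zeta,h\rangle)$ is continuous while the components of $\langle\zeta,b(h)\rangle$ reduce to $\langle A^*\zeta_i,h\rangle$ (continuous, which removes the apparent circularity), $\langle\zeta_i,\alpha(h)\rangle$, and the sum controlled by Lemma~\ref{lemma-Damir}.

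The main obstacle is the direction $(1)\Rightarrow(2)$, and specifically extracting $\mathcal{M}\subset\mathcal{D}(A)$ from the mere existence of a \emph{weak} solution, which a priori need not take values in $\mathcal{D}(A)$. Here I would again pass to the parametrization: for $h_0\in\mathcal{M}$ a local weak solution $X$ stays in $U\cap\mathcal{M}$ up to a strictly positive stopping time, so $X=\phi(Y)$ with $Y:=\langle\zeta,X\rangle$ a finite-dimensional It\^o process. Applying It\^o's formula to $\phi(Y)$ then represents $X$ as an $H$-valued It\^o process with an \emph{honest} $H$-valued drift $\mu_t = D\phi(Y_t)\alpha_{\phi,\zeta}(Y_t) + \tfrac12\sum_j D^2\phi(Y_t)(\sigma_{\phi,\zeta}^j(Y_t),\sigma_{\phi,\zeta}^j(Y_t))$. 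Comparing the $d\beta^j$-coefficients of this representation with those forced by the weak-solution identity yields $\sigma^j(X_t)=\Pi_{X_t}\sigma^j(X_t)$, giving (\ref{sigma-tang}) at $t=0$; comparing finite-variation parts yields $\langle A^*\xi,X_t\rangle = \langle\xi,\mu_t-\alpha(X_t)\rangle$ for all $\xi\in\mathcal{D}(A^*)$.

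This last identity is the crux: it shows the functional $\xi\mapsto\langle A^*\xi,X_t\rangle$ is continuous on $\mathcal{D}(A^*)$, so by the characterization of the double adjoint and the equality $A=A^{**}$ from Proposition~\ref{prop-A-adj-dicht} we obtain $X_t\in\mathcal{D}(A)$ with $AX_t=\mu_t-\alpha(X_t)$. Evaluating at $t=0$ gives $h_0\in\mathcal{D}(A)$, hence (\ref{domain}), and moreover $Ah_0+\alpha(h_0)=\mu_0$. Inserting the decomposition (\ref{decomp-element}), now legitimate since (\ref{sigma-tang}) is established, into $\mu_0$ shows that $D\phi(y_0)\alpha_{\phi,\zeta}(y_0)$ and each $\Pi_{h_0}$-component of $D\sigma^j(h_0)\sigma^j(h_0)$ lie in $T_{h_0}\mathcal{M}$, leaving precisely $Ah_0+\alpha(h_0)-\tfrac12\sum_j D\sigma^j(h_0)\sigma^j(h_0)\in T_{h_0}\mathcal{M}$, which is (\ref{alpha-tang}). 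I expect the only other delicate point to be the bookkeeping of the infinite sums (their convergence and interchange with $D\phi$ and $D^2\phi$), which is controlled throughout by the square-summability of $(\kappa_j)$ via Lemma~\ref{lemma-Damir}.
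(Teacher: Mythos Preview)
Your proposal is correct and follows essentially the same route as the paper: the same cycle $(3)\Rightarrow(1)\Rightarrow(2)\Rightarrow(3)$, the same local parametrization $\phi$ with inverse $\langle\zeta,\cdot\rangle$ and $\zeta_i\in\mathcal{D}(A^*)$, the same passage to the finite-dimensional process $Y=\langle\zeta,X\rangle$, the same use of It\^o's formula on $\phi(Y)$, the same comparison of semimartingale decompositions to read off the tangency conditions, and the same appeal to $A=A^{**}$ (Proposition~\ref{prop-A-adj-dicht}) to extract $h\in\mathcal{D}(A)$ from the continuity of $\xi\mapsto\langle A^*\xi,h\rangle$. The only cosmetic difference is that in $(2)\Rightarrow(3)$ the paper first derives the pointwise identities $\sigma^j(h)=D\phi(y)\sigma^j_{\phi,\zeta}(y)$ and $Ah+\alpha(h)=D\phi(y)\alpha_{\phi,\zeta}(y)+\tfrac12\sum_j D^2\phi(y)(\sigma^j_{\phi,\zeta}(y),\sigma^j_{\phi,\zeta}(y))$ and then substitutes them into the It\^o expansion of $\phi(Y)$ to obtain the \emph{strong} solution equation directly, whereas you first verify the weak formulation and then invoke Corollary~\ref{cor-strong-sol-manifold}; both work.
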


\begin{proof}
(1) $\Rightarrow$ (2): Let $h \in \mathcal{M}$ be arbitrary. By Proposition~\ref{prop-conv-para} and Remark~\ref{rem-ext-para} there exist elements $\zeta_1, \ldots, \zeta_m \in \mathcal{D}(A^*)$ and a parametrization $\phi : V \rightarrow U \cap \mathcal{M}$ around $h$ such that the inverse $\phi^{-1} : U \cap \mathcal{M} \rightarrow V$ is given by $\phi^{-1} = \langle \zeta, \bullet \rangle$, and $\phi$ has an extension $\phi \in C_b^2(\mathbb{R}^m; H)$. Since the submanifold $\mathcal{M}$ is locally invariant for (\ref{SPDE-manifolds}), there exists a local weak solution $X$ to (\ref{SPDE-manifolds}) with initial condition $h$ and some lifetime $\varrho > 0$ such that
\begin{align*}
X_{t \wedge \varrho} \in \mathcal{M} \quad \text{for all $t \geq 0$,} \quad \text{$\mathbb{P}$--almost surely.}
\end{align*}
Since $U$ is an open neighborhood of $h$, there exists $\epsilon > 0$ such that $\overline{B_{\epsilon}(h)} \subset U$, where $B_{\epsilon}(h)$ denotes the open ball
\begin{align*}
B_{\epsilon}(h) = \{ g \in H : \| g - h \| < \epsilon \}. 
\end{align*}
We define the stopping time
\begin{align*}
\tau := \varrho \wedge \inf \{ t \geq 0 : X_t \notin B_{\epsilon}(h) \}.
\end{align*}
Since the process $X$ has continuous sample paths and satisfies $X_0 = h$, we have $\tau > 0$ and $\mathbb{P}$--almost surely
\begin{align*}
X_{t \wedge \tau} \in U \cap \mathcal{M} \quad \text{for all $t \geq 0$.}
\end{align*}
Defining the $\mathbb{R}^m$-valued process $Y := \langle \zeta, X \rangle$ we have $\mathbb{P}$--almost surely
\begin{align*}
Y_{t \wedge \tau} \in V \quad \text{for all $t \geq 0$.}
\end{align*}
Moreover, since $X$ is a weak solution to (\ref{SPDE-manifolds}) with initial condition $h$, setting $y := \langle \zeta, h \rangle \in V$ we have $\mathbb{P}$--almost surely
\begin{align*}
Y_{t \wedge \tau} &= \langle \zeta, h \rangle + \int_0^{t \wedge \tau} \big( \langle A^* \zeta, X_s \rangle + \langle \zeta, \alpha(X_s) \rangle \big) ds + \sum_{j \in \mathbb{N}} \int_0^{t \wedge \tau} \langle \zeta, \sigma^j(X_s) \rangle d\beta_s^j
\\ &= \langle \zeta, h \rangle + \int_0^{t \wedge \tau} \alpha_{\phi, \zeta}(\langle \zeta, X_s \rangle) ds + \sum_{j \in \mathbb{N}} \int_0^{t \wedge \tau} \sigma_{\phi, \zeta}^j(\langle \zeta, X_s \rangle) d\beta_s^j
\\ &= y + \int_0^{t \wedge \tau} \alpha_{\phi, \zeta}(Y_s) ds + \sum_{j \in \mathbb{N}} \int_0^{t \wedge \tau} \sigma_{\phi, \zeta}^j(Y_s) d\beta_s^j, \quad t \geq 0,
\end{align*}
showing that $Y$ is a local strong solution to (\ref{SDE-Y}) with initial condition $y$. By It\^{o}'s formula (Theorem~\ref{thm-Ito}) we obtain $\mathbb{P}$--almost surely
\begin{align*}
X_{t \wedge \tau} &= \phi(Y_{t \wedge \tau}) 
\\ &= h + \int_0^{t \wedge \tau} \bigg( D \phi(Y_s) \alpha_{\phi, \zeta}(Y_s) + \frac{1}{2} \sum_{j \in \mathbb{N}} D^2 \phi(Y_s)(\sigma_{\phi, \zeta}^j(Y_s), \sigma_{\phi, \zeta}^j(Y_s)) \bigg) ds
\\ &\quad + \sum_{j \in \mathbb{N}} \int_0^{t \wedge \tau} D \phi(Y_s) \sigma_{\phi, \zeta}^j(Y_s) d\beta_s^j, \quad t \geq 0.
\end{align*}
Now, let $\xi \in \mathcal{D}(A^*)$ be arbitrary. Then we have $\mathbb{P}$--almost surely
\begin{equation}\label{manifold-eq-1}
\begin{aligned}
\langle \xi, X_{t \wedge \tau} \rangle &= \langle \xi, h \rangle 
\\ &\quad + \int_0^{t \wedge \tau} \Big\langle \xi, D \phi(Y_s) \alpha_{\phi, \zeta}(Y_s) + \frac{1}{2} \sum_{j \in \mathbb{N}} D^2 \phi(Y_s)(\sigma_{\phi, \zeta}^j(Y_s), \sigma_{\phi, \zeta}^j(Y_s)) \Big\rangle ds
\\ &\quad + \sum_{j \in \mathbb{N}} \int_0^{t \wedge \tau} \langle \xi, D \phi(Y_s) \sigma_{\phi, \zeta}^j(Y_s) \rangle d\beta_s^j, \quad t \geq 0.
\end{aligned}
\end{equation}
On the other hand, since $X$ is a local weak solution to (\ref{SPDE-manifolds}) with initial condition $h$ and lifetime $\tau$, we have $\mathbb{P}$--almost surely for all $t \geq 0$ the identity
\begin{equation}\label{manifold-eq-2}
\begin{aligned}
\langle \xi, X_{t \wedge \tau} \rangle &= \langle \xi, h \rangle + \int_0^{t \wedge \tau} \big( \langle A^* \xi, X_s \rangle + \langle \xi, \alpha(X_s) \rangle \big) ds + \sum_{j \in \mathbb{N}} \int_0^{t \wedge \tau} \langle \xi, \sigma^j(X_s) \rangle d\beta_s^j.
\end{aligned}
\end{equation}
Combining (\ref{manifold-eq-1}) and (\ref{manifold-eq-2}) yields up to indistinguishability
\begin{align}\label{can-decomp}
B + M = 0,
\end{align}
where the processes $B$ and $M$ are defined as
\begin{align*}
B_t &:= \int_0^{t \wedge \tau} \bigg( \langle A^* \xi, X_s \rangle + \Big\langle \xi, \alpha(X_s) - D \phi(Y_s) \alpha_{\phi, \zeta}(Y_s) 
\\ &\quad\quad\quad\quad\quad - \frac{1}{2} \sum_{j \in \mathbb{N}} D^2 \phi(Y_s)(\sigma_{\phi, \zeta}^j(Y_s), \sigma_{\phi, \zeta}^j(Y_s)) \Big\rangle \bigg) ds, \quad t \geq 0,
\\ M_t &:= \sum_{j \in \mathbb{N}} \int_0^{t \wedge \tau} \langle \xi, \sigma^j(X_s) - D \phi(Y_s) \sigma_{\phi, \zeta}^j(Y_s) \rangle d\beta_s^j, \quad t \geq 0.
\end{align*}
The process $B + M$ is a continuous semimartingale with canonical decomposition (\ref{can-decomp}). Since the canonical decomposition of a continuous semimartingale is unique up to indistinguishability, we deduce that $B = M = 0$ up to indistinguishability. Using the It\^{o} isometry (\ref{Ito-isom}) we obtain $\mathbb{P}$--almost surely
\begin{align*}
&\int_0^{t \wedge \tau} \bigg( \langle A^* \xi, X_s \rangle + \Big\langle \xi, \alpha(X_s) - D \phi(Y_s) \alpha_{\phi, \zeta}(Y_s) 
\\ &\quad\quad\quad\,\,\, - \frac{1}{2} \sum_{j \in \mathbb{N}} D^2 \phi(Y_s)(\sigma_{\phi, \zeta}^j(Y_s), \sigma_{\phi, \zeta}^j(Y_s)) \Big\rangle \bigg) ds = 0, \quad t \geq 0,
\\ &\int_0^{t \wedge \tau} \sum_{j \in \mathbb{N}} |\langle \xi, \sigma^j(X_s) - D \phi(Y_s) \sigma_{\phi, \zeta}^j(Y_s) \rangle|^2 ds = 0, \quad t \geq 0.
\end{align*}
By the continuity of the processes $X$ and $Y$ we obtain for all $\xi \in \mathcal{D}(A^*)$ the identities
\begin{align*}
&\langle A^* \xi, h \rangle + \Big\langle \xi, \alpha(h) - D \phi(y) \alpha_{\phi, \zeta}(y) - \frac{1}{2} \sum_{j \in \mathbb{N}} D^2 \phi(y)(\sigma_{\phi, \zeta}^j(y), \sigma_{\phi, \zeta}^j(y)) \Big\rangle = 0,
\\ &\langle \xi, \sigma^j(h) - D \phi(y) \sigma_{\phi, \zeta}^j(y) \rangle = 0, \quad j \in \mathbb{N}.
\end{align*}
Consequently, the mapping $\xi \mapsto \langle A^* \xi, h \rangle$ is continuous on $\mathcal{D}(A^*)$, and hence we have $h \in \mathcal{D}(A^{**})$ by the definition (\ref{def-domain-adjoint}). By Proposition~\ref{prop-A-adj-dicht} we have $A = A^{**}$, and thus we obtain $h \in \mathcal{D}(A)$, proving (\ref{domain}). By Proposition~\ref{prop-A-adj-dicht}, the domain $\mathcal{D}(A^*)$ is dense in $H$, and thus
\begin{align*}
\sigma^j(h) = D \phi(y) \sigma_{\phi, \zeta}^j(y) \in T_h \mathcal{M}, \quad j \in \mathbb{N},
\end{align*}
showing (\ref{sigma-tang}). Moreover, for all $\xi \in \mathcal{D}(A^*)$ we have
\begin{align*}
&\Big\langle \xi, Ah + \alpha(h) - D \phi(y) \alpha_{\phi, \zeta}(y) - \frac{1}{2} \sum_{j \in \mathbb{N}} D^2 \phi(y)(\sigma_{\phi, \zeta}^j(y), \sigma_{\phi, \zeta}^j(y)) \Big\rangle = 0.
\end{align*}
Since the domain $\mathcal{D}(A^*)$ is dense in $H$, together with Proposition~\ref{prop-zerlegung} we obtain
\begin{align*}
&Ah + \alpha(h) - \frac{1}{2} \sum_{j \in \mathbb{N}} D\sigma^j(h)\sigma^j(h) 
\\ &= Ah + \alpha(h) - \frac{1}{2} \sum_{j \in \mathbb{N}} \big( D\phi(y) ( \langle \zeta, D \sigma^j(h) \sigma^j(h) \rangle ) + D^2 \phi(y) (\sigma_{\phi, \zeta}^j(y), \sigma_{\phi, \zeta}^j(y)) \big)
\\ &= D \phi(y) \alpha_{\phi, \zeta}(y) - \frac{1}{2} \sum_{j \in \mathbb{N}} D\phi(y) ( \langle \zeta, D \sigma^j(h) \sigma^j(h) \rangle ) 
\\ &= D\phi(y) \bigg( \alpha_{\phi, \zeta}(y) - \frac{1}{2} \sum_{j \in \mathbb{N}} \langle \zeta, D \sigma^j(h) \sigma^j(h) \rangle \bigg) \in T_h \mathcal{M},
\end{align*}
which proves (\ref{alpha-tang}).

\noindent(2) $\Rightarrow$ (1): Let $h_0 \in \mathcal{M}$ be arbitrary. By Proposition~\ref{prop-conv-para} and Remark~\ref{rem-ext-para} there exist $\zeta_1, \ldots, \zeta_m \in \mathcal{D}(A^*)$ and a parametrization $\phi : V \rightarrow U \cap \mathcal{M}$ around $h_0$ such that the inverse $\phi^{-1} : U \cap \mathcal{M} \rightarrow V$ is given by $\phi^{-1} = \langle \zeta, \bullet \rangle$, and $\phi$ has an extension $\phi \in C_b^2(\mathbb{R}^m; H)$. Let $h \in U \cap \mathcal{M}$ be arbitrary and set $y := \langle \zeta, h \rangle \in V$. By relations (\ref{domain}), (\ref{alpha-tang}) and Proposition~\ref{prop-proj-tang} we obtain
\begin{align*}
&Ah + \alpha(h) - \frac{1}{2} \sum_{j \in \mathbb{N}} D\sigma^j(h)\sigma^j(h) = D\phi(y) \bigg( \Big\langle \zeta, Ah + \alpha(h) - \frac{1}{2} \sum_{j \in \mathbb{N}} D\sigma^j(h)\sigma^j(h) \Big\rangle \bigg),
\end{align*}
and thus
\begin{align*}
Ah &= D\phi(y) \bigg( \langle A^* \zeta, h \rangle + \Big\langle \zeta, \alpha(h) - \frac{1}{2} \sum_{j \in \mathbb{N}} D\sigma^j(h)\sigma^j(h) \Big\rangle \bigg) 
\\ &\quad - \alpha(h) + \frac{1}{2} \sum_{j \in \mathbb{N}} D\sigma^j(h)\sigma^j(h).
\end{align*}
Together with Lemma~\ref{lemma-Damir}, this proves the continuity of $A$ on $U \cap \mathcal{M}$. Since $h_0 \in \mathcal{M}$ was arbitrary, this proves that $A$ is continuous on $\mathcal{M}$. 

Furthermore, by (\ref{sigma-tang}) and Proposition~\ref{prop-proj-tang} we have
\begin{align}\label{tangent-1}
\sigma^j(h) = D\phi(y) \sigma_{\phi, \zeta}^j(h) \quad \text{for every $j \in \mathbb{N}$.}
\end{align}
Moreover, by (\ref{domain}), (\ref{alpha-tang}) and Propositions~\ref{prop-proj-tang} and \ref{prop-zerlegung} we obtain
\begin{align*}
&Ah + \alpha(h) - \frac{1}{2} \sum_{j \in \mathbb{N}} D\sigma^j(h)\sigma^j(h) = D\phi(y) \bigg( \Big\langle \zeta, Ah + \alpha(h) - \frac{1}{2} \sum_{j \in \mathbb{N}} D\sigma^j(h)\sigma^j(h) \Big\rangle \bigg)
\\ &= D\phi(y) \big( \langle A^* \zeta, h \rangle + \langle \zeta, \alpha(h) \rangle \big) - \frac{1}{2} \sum_{j \in \mathbb{N}} D\phi(y) \langle \zeta, D\sigma^j(h)\sigma^j(h) \rangle
\\ &= D\phi(y) \alpha_{\phi, \zeta}(y) + \frac{1}{2} \sum_{j \in \mathbb{N}} \big( D^2 \phi(y)(\sigma_{\phi, \zeta}^j(y), \sigma_{\phi, \zeta}^j(y)) - D\sigma^j(h) \sigma^j(h) \big).
\end{align*}
This gives us
\begin{align}\label{tangent-2}
Ah + \alpha(h) = D\phi(y) \alpha_{\phi, \zeta}(y) + \frac{1}{2} \sum_{j \in \mathbb{N}} D^2 \phi(y)(\sigma_{\phi, \zeta}^j(y), \sigma_{\phi, \zeta}^j(y)).
\end{align}
Now, let $Y$ be the strong solution to (\ref{SDE-Y}) with initial condition $y_0 := \langle \zeta, h_0 \rangle \in V$. Since $V$ is open, there exists $\epsilon > 0$ such that $\overline{B_{\epsilon}(y_0)} \subset V$. We define the stopping time
\begin{align*}
\tau := \inf \{ t \geq 0 : Y_t \notin B_{\epsilon}(y_0) \}.
\end{align*}
Since the process $Y$ has continuous sample paths and satisfies $Y_0 = y_0$, we have $\tau > 0$ and $\mathbb{P}$--almost surely
\begin{align*}
Y_{t \wedge \tau} \in V \quad \text{for all $t \geq 0$.}
\end{align*}
Therefore, defining the $H$-valued process $X := \phi(Y)$ we have $\mathbb{P}$--almost surely
\begin{align*}
X_{t \wedge \tau} \in U \cap \mathcal{M} \quad \text{for all $t \geq 0$.}
\end{align*}
Moreover, using It\^{o}'s formula (Theorem~\ref{thm-Ito}) and incorporating (\ref{tangent-1}), (\ref{tangent-2}), we obtain $\mathbb{P}$--almost surely
\begin{align*}
X_{t \wedge \tau} &= \phi(y_0) 
\\ &\quad + \int_0^{t \wedge \tau} \bigg( D\phi(Y_s)\alpha_{\phi, \zeta}(Y_s) + \frac{1}{2} \sum_{j \in \mathbb{N}} D^2 \phi(Y_s) \phi(Y_s)(\sigma_{\phi, \zeta}^j(Y_s), \sigma_{\phi, \zeta}^j(Y_s)) \bigg) ds
\\ &\quad + \sum_{j \in \mathbb{N}} \int_0^{t \wedge \tau} D \phi(Y_s) \sigma_{\phi, \zeta}^j(Y_s) d\beta_s^j
\\ &= \phi(y_0) + \int_0^{t \wedge \tau} \big( A \phi(Y_s) + \alpha(\phi(Y_s)) \big) ds + \sum_{j \in \mathbb{N}} \int_0^{t \wedge \tau} \sigma^j(\phi(Y_s)) d\beta_s^j
\\ &= h_0 + \int_0^{t \wedge \tau} \big( A X_s + \alpha(X_s) \big) ds + \sum_{j \in \mathbb{N}} \int_0^{t \wedge \tau} \sigma^j(X_s) d\beta_s^j, \quad t \geq 0,
\end{align*}
showing that $X$ is a local strong solution to (\ref{SPDE}) with lifetime $\tau$.

\noindent (3) $\Rightarrow$ (1): This implication is a direct consequence of Proposition~\ref{prop-stark-schwach}.
\end{proof}

The results from this section are closely related to the existence of finite dimensional realizations, that is, the existence of invariant manifolds for each starting point $h_0$, and we point out the articles \cite{Bj_Sv, Bj_La}, \cite{Filipovic, Filipovic-Teichmann-royal} and \cite{Tappe-Wiener, Tappe-Levy} regarding this topic. Furthermore, we mention that Theorem~\ref{thm-manifolds} has been extended in \cite{Manifolds} to SPDEs with jumps.

\end{document}